\numberwithin{equation}{section}
\newtheorem{theorem}{Theorem}[section]
\newtheorem{lemma}{Lemma}[section]
\newtheorem{Def}{Definition}[section]
\newtheorem{rem}{Remark}[section]
\newtheorem{prop}{Proposition}[section]
\newtheorem{cor}{Corollary}[section]
\def\R{\Bbb R}
\def\Dx{\Delta_x}
\def\Dt{\partial_t}
\def\({\left(}
\def\){\right)}
\def\eb{\varepsilon}
\def\Cal{\mathcal}
\begin{document}
\title[Sub-quintic wave equation with fractional damping]{Strichartz estimates and smooth attractors for a sub-quintic wave equation with fractional damping in bounded domains}
\author[] {Anton Savostianov}

\begin{abstract}
The work is devoted to Dirichlet problem for sub-quintic semi-linear wave equation with damping damping term of the form $(-\Dx)^\alpha\Dt u$, $\alpha\in(0,\frac{1}{2})$, in bounded smooth domains of $\Bbb R^3$. It appears that to prove well-posedness and develop smooth attractor theory for the problem we need additional regularity of the solutions, which does not follow from the energy estimate. Considering the original problem as perturbation of the linear one the task is reduced to derivation of Strichartz type estimate for the linear wave equation with fractional damping, which is the main feature of the work. Existence of smooth exponential attractor for the natural dynamical system associated with the problem is also established.
\end{abstract}

\subjclass[2000]{35B40, 35B45, 35L70}
\keywords{damped wave equation, fractional damping, global attractor, smoothness, Strichartz estimates}

\address{University of Surrey, Department of Mathematics, Guildford, GU2 7XH, United Kingdom.}
\email{a.savostianov@surrey.ac.uk}

\maketitle

\section{Introduction}
In this work we consider the following semi-linear damped wave equation in a bounded smooth domain $\Omega\subset \R^3$
\begin{equation}
\label{eq intro}
\begin{cases}
\Dt^2 u-\Dx u+\gamma(-\Dx)^\alpha\Dt u+f(u)=g,\ x\in\Omega,\\
u|_{\partial\Omega}=0,\ u|_{t=0}=u_0,\ \Dt u|_{t=0}=u_1, 
\end{cases}
\end{equation}
where constants $\gamma>0$ and $\alpha\in(0,\frac{1}{2})$, initial data $(u_0,u_1)$ belong to standard energy space $\Cal{E}=H^1_0(\Omega)\times L^2(\Omega)$, external force $g=g(x)\in L^2(\Omega)$,  and non-linearity $f\in C^1(\R)$ is of sub-quintic
growth \eqref{f ass1} and satisfies natural dissipative assumptions \eqref{f ass2}.
Also for brevity we use the notation $\xi_u(t):=(u(t),\Dt u(t))$ and $\|\cdot\|$ for norm in $L^2(\Omega)$.

In last years wave equation with fractional damping term attracts more and more attention (see for example \cite{chen,tri1,tri2,Chu2010,KZwvEq2009,AZqdw}). From applied point of view this is related to the fact that such equations model various processes with frequency depending attenuation ( see \cite{chen,tree} and references therein). From mathematical point of view even in linear case ( $f\equiv 0$) properties of such equations demonstrate non-trivial dependence on $\alpha$. For example, linear equation \eqref{eq intro} generates an analytic semigroup iff $\alpha\in[\frac{1}{2},1]$ (see \cite{tri2}). For $\alpha\in(0,1)$ equation \eqref{eq intro} possesses smoothing property similar to parabolic equations. For $\alpha=1$ smoothing property is instantaneous for $\Dt u$ and asymptotic for $u$ ( see \cite{CV}). It is well known that when $\alpha=0$ equation \eqref{eq intro} enjoys finite speed of propagation property. 


In the presence of the non-linear term of type $f(u)\sim u|u|^q$ even well-posedness becomes questionable. For a long time the qubic growth ( $q=2$) of the non-linearity was considered as critical for weakly damped ( $\alpha=0$) wave equation ( see \cite{GraPat,PataZelwd} ) and quintic growth rate ( $q=4$) was considered as critical for strongly damped ( $\alpha=1$) wave equation (see \cite{PataZel2006}). However, a breakthrough was done in \cite{KZwvEq2009}, where global wellposedness, dissipativity and existence of smooth attractor was obtained for $q\in[0,\infty)$ as long as $\alpha\in [\frac{3}{4},1]$. In addition, for $\alpha\in[\frac{1}{2},\frac{3}{4})$ there was obtained well-posedness and built up a smooth attractor theory at least under assumption $0\leq q<q(\alpha)=\frac{8\alpha}{3-4\alpha}$. For $\alpha=\frac{1}{2}$ the critical case with $q=q(\frac{1}{2})=4$ ( that is nonlinearity has quintic growth) is done in
\cite{AZqdw}. It appears that in this case the solution possesses hidden extra regularity, $u\in L^2([0,T];H^\frac{3}{2}(\Omega))$, that does not follow directly from energy estimate, but from hidden Lyapunov type functional.

The progress with weakly damped equation is closely related to the progress with pure wave equation. In the first part of 90's it was noticed that for linear wave equation $L^p([0,T];L^q(\Omega))$ norm ( for some admissible $p$ and $q$) can be controlled via energy norm of initial data and external force when $\Omega=\R^3$.
Such type of estimates became known as Strichartz estimates. Furthermore, this type of estimates allow to establish existence of more regular energy solutions ( with finite $L^p([0,T];L^q(\Omega))$ norm) in semilinear case (see \cite{SS,kap,kap1}).  In contrast to the case $\alpha=\frac12$ it is not known whether all energy solutions satisfy Strichartz estimates and so we will refer to this class of solutions as Shatah-Struwe solutions. Partial results highlighting this question are available in \cite{plan3}. The above mentioned additional regularity allowed to prove uniqueness of Shatah-Struwe solutions for semi-linear wave equation in the whole space $\R^3$ in the case when $q\leq 4$. However Strichartz estimates in {\it bounded smooth} domains became available only recently. This, in turn, leads to well-posedness of quintic wave equation ( see \cite{Sogge2009,stri}) and as consequence of quintic weakly damped wave equation ( see \cite{KSZ}) in smooth bounded domains. 

Smooth attractor theory for quintic weakly damped wave equation is developed in \cite{KSZ}. The main difficulty arising in this case is the fact, that despite finiteness of $L^4([t,t+1];L^{12}(\Omega))$ norm of the solution we do not have any explicit control of this norm as $t\to\infty$ since its finiteness is gained by contradiction arguments and thus this regularity, apriori, could be lost at infinity. Fortunately, as shown in \cite{KSZ} this is not the case. The main tool used to obtain this result is the theory of weak attractors, and in particular, the backward smoothing property on weak attractor which are obtained in \cite{ZelDCDS}. A brief review highlighting the main ideas of smooth attractor theory in critical case $q=4$ for $\alpha=\frac{1}{2}$ and $\alpha=0$ in bounded domains are presented in \cite{AZr}. 

Even less is known for the wave equation \eqref{eq intro} with damping term $(-\Dx)^\alpha\Dt u$ when $\alpha\in (0,\frac{1}{2})$. To the best of our knowledge, well-posedness and attractor theory in the case of cubic non-linearity can be done similarly to the case of $\alpha=0$ ( see \cite{CV}). However, there are no such results in case of super-cubic non-linearity. 

The aim of this work is twofold. The first one is to prove existence and uniqueness of Shatah-Struwe solutions for semi-linear wave equation \eqref{eq intro} with damping term $(-\Dx)^\alpha\Dt u$, when $\alpha\in(0,\frac{1}{2})$ and subquintic non-linearity in a bounded smooth domain. The second one is to build up smooth attractor theory for the considered equation. The main problem in the case $\alpha\in(0,\frac12)$ is derivation of control of $L^5([0,T];L^{10}(\Omega))$ norm to solutions of the linear problem
\begin{equation}
\label{eq ldwv}
\begin{cases}
\Dt^2u+\gamma(-\Dx)^\alpha\Dt u-\Dx u=h(t),\\ 
u|_{t=0}=u_0,\ \Dt u|_{t=0}=u_1,\ u|_{\partial\Omega}=0,
\end{cases} 
\end{equation}
where $(u_0,u_1)\in H^1_0(\Omega)\times L^2(\Omega)$ and $h\in L^1([0,T];L^2(\Omega))$. In contrast to the case $\alpha=0$, we can not just put damping term $(-\Dx)^\alpha\Dt u$ to the right hand side and use usual Strichartz estimate since
$(-\Dx)^\alpha\Dt u\in L^2([0,T];H^{-\alpha})$ but not in $L^1([0,T];L^2(\Omega))$. To overcome this difficulty we notice
that change of variables $v(t)=e^{\frac{\gamma}{2}(-\Dx)^\alpha t}u(t)$
transforms linear homogeneous damped wave equation \eqref{eq ldwv}( with $h\equiv 0$) into the following one
\begin{equation}
\label{eq lwvm}
\Dt^2 v-\Dx v-\frac{\gamma^2}{4}(-\Dx)^{2\alpha}v=0.
\end{equation}
Then using spectral cluster estimates obtained in \cite{Lp clustrs} and adapting technique presented in \cite{stri} we are still able to tackle extra term $\frac{\gamma^2}{4}(-\Dx)^{2\alpha}v$ and establish control of $L^5([0,T];L^{10}(\Omega))$ norm for solutions of \eqref{eq lwvm}. This implies the control of $L^5([0,T];L^{10}(\Omega))$ norm for homogeneous equation \eqref{eq ldwv} since operators $e^{-\frac{\gamma}{2}(-\Dx)^\alpha t} $ are bounded from $L^p(\Omega)$ to $L^p(\Omega)$ for $p\in (1,\infty)$ as long as $t\geq 0$. Consequently this also gives $L^5([0,T];L^{10}(\Omega))$ control for non-homogeneous equation \eqref{eq ldwv} that leads to the first main result
\begin{theorem}( see Proposition \ref{prop StrLin})
Let $\gamma$ be a strictly positive number, $h(t)\in L^1([0,T];L^2(\Omega))$ and initial data $(u_0,u_1)\in H^1_0(\Omega)\times L^2(\Omega)$. Then every energy solution $u$ to problem \eqref{eq ldwv} possesses the following extra regularity
\begin{equation}
\|u\|_{L^5([0,T];L^{10}(\Omega))}\leq C(\|u_0\|_{H^1(\Omega)}+\|u_1\|+\|h\|_{L^1([0,T];L^2(\Omega))}),
\end{equation}
where constant $C$ is independent of $T$ and initial data $(u_0,u_1)$.
\end{theorem}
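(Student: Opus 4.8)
The plan is to reduce the estimate to the homogeneous problem, prove a Strichartz estimate for the transformed equation \eqref{eq lwvm} on a fixed unit time interval, and then restore the factor $e^{-\frac{\gamma}{2}(-\Dx)^\alpha t}$ and sum over unit intervals using the dissipativity of \eqref{eq ldwv}. First I would dispose of the forcing by Duhamel's principle. Writing $u=u_h+u_f$, where $u_h$ solves the homogeneous equation ($h\equiv0$) with data $(u_0,u_1)$ and $u_f(t)=\int_0^t W(t,s)\,(0,h(s))\,ds$, with $W(t,s)$ the solution operator of the homogeneous problem started at time $s$, Minkowski's inequality in $s$ gives
\[
\|u_f\|_{L^5([0,T];L^{10})}\le\int_0^T\|W(\cdot,s)(0,h(s))\|_{L^5([s,T];L^{10})}\,ds .
\]
Hence, once the homogeneous estimate $\|W(\cdot,s)(0,w)\|_{L^5([s,T];L^{10})}\le C\|w\|$ is known with $C$ independent of $s$ and $T$, the forcing contributes exactly $C\|h\|_{L^1([0,T];L^2)}$, and everything reduces to the homogeneous estimate with a $T$-independent constant.

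For the homogeneous problem I would pass to $v=e^{\frac{\gamma}{2}(-\Dx)^\alpha t}u$, which solves \eqref{eq lwvm}, and establish a Strichartz estimate for $v$ on $[0,1]$,
\[
\|v\|_{L^5([0,1];L^{10})}\le C\bigl(\|v(0)\|_{H^1}+\|\Dt v(0)\|\bigr).
\]
Here I would follow the parametrix and spectral-cluster scheme of \cite{stri}: decompose $v$ by a Littlewood--Paley decomposition in the spectral variable of $-\Dx$, localize to time intervals of length $\sim\lambda^{-1}$ on the dyadic band of frequency $\lambda$, build a short-time parametrix there, apply the $L^p$ spectral cluster bounds of \cite{Lp clustrs}, and reassemble by a square-function argument. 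The one genuinely new ingredient is the extra term $\frac{\gamma^2}{4}(-\Dx)^{2\alpha}v$. Because $2\alpha<1$ it is of strictly lower order than $-\Dx$, so on the band of frequency $\lambda$ it perturbs the dispersion relation $\tau^2=\lambda^2-\frac{\gamma^2}{4}\lambda^{4\alpha}$ only by a relative amount $O(\lambda^{4\alpha-2})$ and leaves the curvature of the characteristic cone intact for large $\lambda$; it can therefore be carried as an error through the parametrix. The finitely many low frequencies, on which $\lambda^2-\frac{\gamma^2}{4}\lambda^{4\alpha}$ may be nonpositive and the evolution ceases to be oscillatory, span a finite-dimensional space on which all norms are comparable and are estimated directly.

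To return to $u$ I would use that $e^{-\frac{\gamma}{2}(-\Dx)^\alpha t}$, being subordinated to the Dirichlet heat semigroup, is a contraction on $L^{10}(\Omega)$ uniformly for $t\ge0$; hence $\|u(t)\|_{L^{10}}\le\|v(t)\|_{L^{10}}$ and $\|u\|_{L^5(I;L^{10})}\le\|v\|_{L^5(I;L^{10})}$ on every interval $I$. Restarting the substitution at each integer time $n$ (so that $v$ has data $u(n)$ and $\frac{\gamma}{2}(-\Dx)^\alpha u(n)+\Dt u(n)$), and using $\|(-\Dx)^\alpha u(n)\|\lesssim\|u(n)\|_{H^1}$ since $\alpha<\frac12$, the fixed-interval estimate yields
\[
\|u\|_{L^5([n,n+1];L^{10})}\le C'\,E(n)^{1/2},\qquad E(n):=\tfrac12\|\Dt u(n)\|^2+\tfrac12\|\Nx u(n)\|^2,
\]
with one and the same constant $C'$ for every $n$. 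Finally, the energy identity shows $E$ is nonincreasing, and the spectral picture of \eqref{eq ldwv} (the characteristic roots $\tau_\pm(\lambda)$ have real parts bounded above by some $-c_0<0$, uniformly in the eigenvalue $\lambda^2$ of $-\Dx$) gives decay of $E(n)$ fast enough that $\sum_{n\ge0}E(n)^{5/2}$ converges. Summing fifth powers over the unit intervals then produces $\|u\|_{L^5([0,T];L^{10})}\le C''(\|u_0\|_{H^1}+\|u_1\|)$ with $C''$ independent of $T$.

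The main obstacle is the fixed-interval estimate for \eqref{eq lwvm}: transplanting the boundary parametrix and spectral-cluster machinery of \cite{stri,Lp clustrs}, designed for the exact wave operator, to the perturbed operator requires verifying that $\frac{\gamma^2}{4}(-\Dx)^{2\alpha}$ is truly negligible at the level of the dispersive and square-function estimates, uniformly across all frequency bands. A secondary point is ensuring the unit-interval constant is independent of the base point $n$, so that the substitution can be restarted at each integer time, together with the decay of $E(n)$; the latter must be obtained from a resolvent bound for \eqref{eq ldwv} rather than from analyticity, which is available only for $\alpha\ge\frac12$.
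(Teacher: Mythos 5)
Your overall architecture coincides with the paper's: reduce to the homogeneous problem by Duhamel and Minkowski, pass to $v=e^{\frac{\gamma}{2}(-\Dx)^\alpha t}u$ solving \eqref{eq lwvm}, prove a Strichartz bound for $v$ on a fixed time interval, return to $u$ via the $L^{10}$-boundedness of $e^{-\frac{\gamma}{2}(-\Dx)^\alpha t}$ (your subordination argument for the contraction property is fine), and glue unit intervals using the exponential energy decay. The gluing and the Duhamel reduction are essentially identical to Corollaries \ref{cor nonhomstri}, \ref{cor nhstri.lin [t-1,t]} and Proposition \ref{prop StrLin}.

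The genuine gap is the fixed-interval estimate for \eqref{eq lwvm}, which is the entire content of the theorem and which you do not prove: you propose to rebuild the boundary parametrix of Blair--Smith--Sogge type, localize to time scales $\lambda^{-1}$, and ``carry $\frac{\gamma^2}{4}(-\Dx)^{2\alpha}v$ as an error through the parametrix.'' For a domain with boundary this is not a routine perturbation check --- you yourself flag it as the main obstacle --- and nothing in your sketch indicates how the error would actually be absorbed in the dispersive and square-function estimates near the boundary. The paper avoids parametrices altogether (this is the key idea you are missing, and it is the route actually taken in \cite{stri}): one quantizes the perturbed frequencies by defining $Ae_n=\bigl[\sqrt{k^2-\frac{\gamma^2}{4}k^{4\alpha}}\bigr]e_n$ for $\sqrt{\lambda_n}\in[k,k+1)$, so that $e^{itA}u_0$ is a Fourier series in $t$ with integer frequencies; Plancherel on $[0,2\pi]$, the one-dimensional embedding $H^{\frac12-\frac1q}(0,2\pi)\subset L^q(0,2\pi)$, Minkowski's inequality and the cluster bound \eqref{sp est} then give the estimate for $e^{itA}$ directly. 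The perturbation $(-\Dx)^{2\alpha}$ is handled by the single elementary observation that the gaps $\sqrt{(m+1)^2-\frac{\gamma^2}{4}(m+1)^{4\alpha}}-\sqrt{m^2-\frac{\gamma^2}{4}m^{4\alpha}}\to1$, so each integer frequency $k$ collects at most two spectral clusters and the square-function structure survives; the finitely many low modes are treated by hand (as you also propose), and the discrepancy between $A$ and $\sqrt{-\Dx-\frac{\gamma^2}{4}(-\Dx)^{2\alpha}}$ is a bounded operator removed by one more Duhamel step. Without either executing your parametrix construction or supplying this quantization argument, the central estimate remains unproved.
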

Finally, considering equation \eqref{eq intro} as perturbation of \eqref{eq ldwv} we prove global existence and uniqueness of Shatah-Struwe solutions for original problem \eqref{eq intro} with finite $L^5([0,T];L^{10}(\Omega))$ norm
\begin{theorem} (see Theorems \ref{th StrEx} and \ref{th uniq.cont})
Let $\gamma>0$, and nonlinearity $f\in C^1(\R)$ is such that
\begin{equation}
 f(u)u\geq -C,\quad |f'(u)|\leq C(1+|u|^q),\quad q\in[0,4).
\end{equation} 
Then for any initial data $\xi_0=(u_0,u_1)\in H^1_0(\Omega)\times L^2(\Omega)$ there exists and unique Shatah-Struwe solution $u$ of problem \eqref{eq intro} with finite $L^5([0,T];L^{10}(\Omega))$ norm. Moreover this solution enjoys the following estimate
\begin{multline}
\|\xi_u(t)\|_\Cal{E}+\|\Dt u\|_{L^2([\max\{0,t-1\},t];H^\alpha(\Omega))}+\\
\|u\|_{L^5([\max\{0,t-1\},t];L^{10}(\Omega))}\leq Q(\|\xi_0\|_\Cal{E})e^{-\beta t}+Q(\|g\|),\quad t\geq 0,
\end{multline}
where constant $\beta>0$ and increasing function $Q$ are independent of $t$ and $\xi_0$.  
\end{theorem}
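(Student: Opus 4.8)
The plan is to treat \eqref{eq intro} as a perturbation of the linear problem \eqref{eq ldwv} with source $h(t)=g-f(u(t))$ and to run a fixed-point argument in the Shatah--Struwe class $C([0,T];\Cal{E})\cap L^5([0,T];L^{10}(\Omega))$, using the linear Strichartz estimate (Proposition \ref{prop StrLin}) as the linear engine. The decisive computation is the control of the source in $L^1([0,T];L^2(\Omega))$. From $|f'(u)|\le C(1+|u|^q)$ one gets $|f(u)|\le C(1+|u|^{q+1})$, hence pointwise in time $\|f(u)\|\le C(1+\|u\|_{L^{2(q+1)}}^{q+1})$. Interpolating $L^{2(q+1)}$ between $L^6$ (available from $H^1_0\hookrightarrow L^6$) and $L^{10}$ (the Strichartz exponent), and then applying H\"older in time, I expect the bound
\begin{equation}
\|f(u)\|_{L^1([0,T];L^2)}\le C\,T + C\,T^{\frac{4-q}{2}}\,\|u\|_{L^\infty([0,T];H^1)}^{(1-\sigma)(q+1)}\,\|u\|_{L^5([0,T];L^{10})}^{\sigma(q+1)},
\end{equation}
where a short computation gives $\sigma(q+1)=\tfrac{5(q-2)}{2}$, which is strictly less than $5$ precisely when $q<4$ (for $q<2$ one uses $L^6$ alone and the argument is easier). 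The strictly positive power $T^{(4-q)/2}$ is the gain furnished by sub-quinticity: it makes the nonlinear map a contraction for small $T$ and, crucially, lets the local existence time depend on $\|\xi_0\|_{\Cal{E}}$ alone.

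In parallel I would establish the dissipative energy estimate. Multiplying \eqref{eq intro} by $\Dt u$ produces the identity for $E(t)=\frac12\|\Dt u\|^2+\frac12\|\Nx u\|^2+\int_\Omega F(u)$, with $F(u)=\int_0^u f$, whose dissipation $\gamma\|(-\Dx)^{\alpha/2}\Dt u\|^2=\gamma\|\Dt u\|_{H^\alpha}^2$ comes from the fractional damping; the dissipativity assumption on $f$ bounds $E$ from below and controls the energy norm from above. To upgrade this to exponential decay I would use the Lyapunov functional $\Cal{L}=E+\delta(u,\Dt u)$ with $\delta>0$ small; differentiating and invoking the equation, and then using $\|\Dt u\|\le\|\Dt u\|_{H^\alpha}$ (as $\alpha>0$) together with $\|u\|_{H^\alpha}\lesssim\|u\|_{H^1}$ (as $\alpha<1$) to absorb the cross terms, one obtains $\frac{d}{dt}\Cal{L}+\beta\Cal{L}\le C(1+\|g\|^2)$. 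Gronwall then yields $\|\xi_u(t)\|_{\Cal{E}}\le Q(\|\xi_0\|_{\Cal{E}})e^{-\beta t}+Q(\|g\|)$, and integrating the energy identity over $[\max\{0,t-1\},t]$ delivers the $L^2_tH^\alpha$ bound on $\Dt u$ in the statement.

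Local existence then follows by a contraction mapping on a ball of $C([0,T];\Cal{E})\cap L^5([0,T];L^{10})$: Proposition \ref{prop StrLin} bounds the $L^5L^{10}$ norm of the image by the data plus $\|f(u)\|_{L^1L^2}$, while the standard energy estimate controls the $C_t\Cal{E}$ part; the Lipschitz difference bound $|f(u_1)-f(u_2)|\le C(1+|u_1|^q+|u_2|^q)|u_1-u_2|$, handled by the same interpolation, furnishes contractivity with the identical factor $T^{(4-q)/2}$. Since this factor depends on $T$ and on the solution only through its energy norm, the local time is a function of $\|\xi_0\|_{\Cal{E}}$ alone; combined with the global-in-time energy bound above, the solution extends to all $t\ge0$. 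Applying Proposition \ref{prop StrLin} on each interval $[\max\{0,t-1\},t]$ with source $g-f(u)$, and using the uniform energy bound to control both the data and $\|f(u)\|_{L^1L^2}$ on that unit interval, produces the $L^5([\max\{0,t-1\},t];L^{10})$ bound in the statement.

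For uniqueness I would set $w=u_1-u_2$, note that $w$ solves \eqref{eq ldwv} with zero data and source $-(f(u_1)-f(u_2))$, and apply Proposition \ref{prop StrLin} and the energy estimate to $w$ on a short interval; the Lipschitz bound together with the finiteness of $\|u_i\|_{L^5L^{10}}$ gives an inequality of the form $\|w\|_{L^5L^{10}}+\|\xi_w\|_{C_t\Cal{E}}\le C\,T^{\frac{4-q}{2}}\,(\cdots)\,(\|w\|_{L^5L^{10}}+\|\xi_w\|_{C_t\Cal{E}})$, forcing $w\equiv0$ on small intervals and hence globally. The main obstacle throughout is purely the bookkeeping of interpolation exponents: one must check that every power of $u$ and of $w$ entering the nonlinear terms can be split between $L^\infty_tH^1\hookrightarrow L^\infty_tL^6$ and $L^5_tL^{10}$ so as to leave a strictly positive power of $T$, and the computation of the first paragraph shows this is feasible exactly under the sub-quintic restriction $q<4$. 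The critical case $q=4$ loses the gain $T^{(4-q)/2}$ and genuinely requires the finer, non-contraction arguments of \cite{KSZ,stri}, which I would avoid here.
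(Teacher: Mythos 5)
Your overall strategy coincides with the paper's: both treat \eqref{eq intro} as a perturbation of \eqref{eq ldwv} and feed the nonlinearity into the linear Strichartz estimate of Proposition \ref{prop StrLin}, and your uniqueness argument (difference equation, Lipschitz bound on $f$, absorption via a positive power of the interval length, iteration over subintervals) is essentially identical to the proof of Theorem \ref{th uniq.cont}. Where you genuinely differ is in the construction of the solution. The paper uses Galerkin approximations, splits $u_N=v_N+w_N$ with $w_N$ carrying zero initial data, derives the self-improving inequality $\|w_N\|_{L^5L^{10}}\le T^{\frac{4-q}{5}}Q+C\|w_N\|_{L^5L^{10}}^{q+1}$, and closes it with the continuity bootstrap Lemma \ref{lem y<y^sigma + eps} (which crucially needs $w_N$ to start from zero $L^5L^{10}$ norm); you instead run a Banach fixed point in $C([0,T];\Cal E)\cap L^5L^{10}$. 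Your route buys a cleaner local well-posedness statement with Lipschitz dependence for free, and your interpolation of $L^{2(q+1)}$ between $L^6$ and $L^{10}$ gives the sharper gain $T^{(4-q)/2}$ versus the paper's cruder $L^{10}\subset L^{2q+2}$ and $T^{(4-q)/5}$; the exponent check $\sigma(q+1)=\tfrac{5(q-2)}{2}<5\iff q<4$ is correct. The paper's route avoids having to verify that the fixed point is a distributional energy solution, and its Lemma \ref{lem y<y^sigma + eps} is the analogue of your smallness-of-$T$ mechanism.

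One step of yours is stated in a way that does not work as written: you claim the bound on $\|u\|_{L^5([\max\{0,t-1\},t];L^{10})}$ follows by applying Proposition \ref{prop StrLin} on the unit interval and ``using the uniform energy bound to control $\|f(u)\|_{L^1L^2}$.'' For $q>2$ the energy norm alone does not control $\|f(u)\|_{L^1L^2}$ --- your own interpolation shows this requires the $L^5L^{10}$ norm of $u$ on that interval, so the argument is circular on a unit interval, where neither the prefactor $T^{(4-q)/2}$ nor the data are small. The fix is available inside your framework: the contraction already yields $\|u\|_{L^5([\tau,\tau+T_0];L^{10})}\le R$ with $T_0$ and $R$ depending only on $\|\xi_u(\tau)\|_{\Cal E}$ and $\|g\|$, so one covers $[\max\{0,t-1\},t]$ by at most $\lceil 1/T_0\rceil$ such intervals and sums, invoking the dissipative energy estimate and Lemma \ref{lem Q(Ae-t+B)<Q(A)e-t+Q(B)} to convert $\|\xi_u(\tau)\|_{\Cal E}\le Q(\|\xi_0\|_{\Cal E})e^{-\beta\tau}+Q(\|g\|)$ into the stated decaying form. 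This is exactly the summation the paper performs in \eqref{3.5}; with that correction your proposal is complete.
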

We note that available spectral cluster estimates obtained in \cite{Lp clustrs} allow us to get the control of $L^5([0,T];L^{10}(\Omega))$ norm at most that allows easily to tackle sub-quintic growth rate of non-linearity but requires further work for {\it quintic} non-linearity. Indeed, in case of quintic non-linearity, we can prove only local existence of Shatah-Struwe solutions and non-concentration of $L^5([0,T];L^{10}(\Omega))$ remains open. The main difficulty here is related to the fact that in case $\alpha\in(0,\frac{1}{2})$ the finite speed of propagation fails. And thus arguments from \cite{stri} do not work directly.

The attractor theory for the considered equation and $q\in[0,4)$ is built up due to the fact that the considered Shatah-Struwe solutions possess smoothing property similar to parabolic equations ( see Theorem \ref{th sm3}).

The work is organised as follows. In Section 2 we derive a Strichartz type estimate for the linear problem \eqref{eq ldwv}. Existence and uniqueness of Shatah-Struwe solutions in semilinear case, as well as their basic properties, are established in Section 3. In Section 4 we show that Shatah-Struwe solutions to problem \eqref{eq intro} satisfy parabolic-like smoothing property. Finally, existence of a smooth global attractor as well as exponential global attractor is given in Section 5. 

\section{Strichartz estimate in the linear case}
In this section we establish Strichartz estimates for the linear damped wave equation \eqref{eq ldwv},
where $\Omega\subset \R^3$ is a bounded smooth domain, $\gamma>0$ is a constant and $h(t)\in L^1\([0,T];L^2(\Omega)\)$ and $\alpha\in(0,\frac{1}{2})$.

Let us remind the classical energy estimate for linear equation \eqref{eq ldwv}
\begin{prop}
\label{prop lin basic}
Let $u$ be a distributional solution of \eqref{eq ldwv} and the above assumptions hold. Then $u$ satisfies the following estimates
\begin{equation}
\label{2.0}
\|\xi_u(t)\|_\Cal{E}+\|\Dt u(s)\|_{L^2([\max\{0,t-1\},t];H^\alpha(\Omega))}ds\leq C\(e^{-\beta t}\|\xi_0\|_\Cal{E}+\int_0^te^{-\beta(t-s)}\|h(s)\|ds\),
\end{equation}
for some small enough $\beta>0$ and $C>0$ which depend on $\gamma$ only.
\end{prop}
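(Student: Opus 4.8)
The plan is to rely on the classical energy multiplier method, the only structurally new ingredient being the bounded-domain spectral gap which makes the \emph{fractional} damping as effective as a local one. Denote by $\l_1>0$ the first Dirichlet eigenvalue of $-\Dx$. Since the spectrum of $-\Dx$ is bounded below by $\l_1$, spectral calculus yields the Poincar\'e-type inequality $\|v\|\leq\l_1^{-\alpha/2}\|v\|_{H^\alpha(\Omega)}$ for every $v\in H^\alpha_0(\Omega)$; in particular the weak fractional damping still controls the full $L^2$ norm of the velocity. First I would multiply \eqref{eq ldwv} by $\Dt u$ and integrate over $\Omega$; using the Dirichlet boundary condition and the identity $\langle(-\Dx)^\alpha\Dt u,\Dt u\rangle=\|\Dt u\|_{H^\alpha(\Omega)}^2$ (understood via spectral calculus) this gives the basic energy identity
\[
\frac{d}{dt}E(t)+\gamma\|\Dt u\|_{H^\alpha(\Omega)}^2=\langle h,\Dt u\rangle,\qquad E(t):=\tfrac12\|\Dt u\|^2+\tfrac12\|\Nx u\|^2 ,
\]
which produces dissipation in the velocity but none in $\|\Nx u\|$.

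To upgrade this to dissipation of the full energy I would introduce the perturbed functional $\Cal{L}(t):=E(t)+\eb\(\langle\Dt u,u\rangle+\tfrac{\gamma}{2}\|u\|_{H^\alpha(\Omega)}^2\)$ and multiply \eqref{eq ldwv} also by $u$. Combining the two identities and using $\langle\Dt^2 u,u\rangle=\tfrac{d}{dt}\langle\Dt u,u\rangle-\|\Dt u\|^2$, $-\langle\Dx u,u\rangle=\|\Nx u\|^2$ and $\gamma\langle(-\Dx)^\alpha\Dt u,u\rangle=\tfrac{\gamma}{2}\tfrac{d}{dt}\|u\|_{H^\alpha(\Omega)}^2$ gives
\[
\frac{d}{dt}\Cal{L}(t)+\eb\|\Nx u\|^2+\(\gamma\|\Dt u\|_{H^\alpha(\Omega)}^2-\eb\|\Dt u\|^2\)=\langle h,\Dt u+\eb u\rangle .
\]
The spectral-gap inequality bounds $\eb\|\Dt u\|^2\leq\eb\l_1^{-\alpha}\|\Dt u\|_{H^\alpha(\Omega)}^2$, so for $\eb$ small enough the bracketed term is nonnegative and, together with the $\eb\|\Nx u\|^2$ term, dominates a fixed multiple of $E$. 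For $\eb$ small $\Cal{L}$ is also equivalent to $E$, whence $\tfrac{d}{dt}\Cal{L}(t)+\delta\Cal{L}(t)\leq\langle h,\Dt u+\eb u\rangle\leq C\|h\|\sqrt{\Cal{L}(t)}$ for some $\delta>0$ depending only on $\gamma,\eb,\l_1$.

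Finally I would rewrite this as $\tfrac{d}{dt}\sqrt{\Cal{L}}+\tfrac\delta2\sqrt{\Cal{L}}\leq C\|h\|$ and integrate by Gronwall to obtain the pointwise bound for $\|\xi_u(t)\|_\Cal{E}\sim\sqrt{\Cal{L}(t)}$ with decay rate $\beta=\delta/2$. For the sliding-window term $\|\Dt u\|_{L^2([\max\{0,t-1\},t];H^\alpha(\Omega))}$ I would integrate the basic energy identity over $[\max\{0,t-1\},t]$, so that $\gamma\int\|\Dt u\|_{H^\alpha(\Omega)}^2$ is controlled by $E$ at the two endpoints plus $\int\|h\|\,\|\Dt u\|$, both of which are already dominated by the right-hand side of \eqref{2.0}.

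The main obstacle is not the algebra but its rigorous justification: for a merely distributional solution the multiplier computations are only formal, so I would carry them out on a Galerkin (or smooth-data) approximation where all quantities are finite and the spectral identities are exact, and then pass to the limit using lower semicontinuity of the norms. The genuinely $\alpha$-dependent point — and the only place where $\Omega$ being bounded is essential — is the spectral-gap inequality $\|v\|\leq\l_1^{-\alpha/2}\|v\|_{H^\alpha(\Omega)}$: it is precisely what lets the weak fractional damping with $\alpha\in(0,\tfrac12)$ control the $L^2$ velocity and thereby yield a uniform exponential rate $\beta$, a step that would fail on the whole space for lack of a spectral gap.
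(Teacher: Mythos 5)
Your proposal is correct and coincides with the paper's own (only sketched) argument: the paper states that \eqref{2.0} "easily follows from multiplication of \eqref{eq ldwv} by $\Dt u+ru$ with small enough $r>0$", which is precisely your perturbed energy functional $\Cal L=E+\eb(\langle\Dt u,u\rangle+\tfrac{\gamma}{2}\|u\|^2_{H^\alpha})$. Your additional remarks — that the spectral gap $\|v\|\le\l_1^{-\alpha/2}\|v\|_{H^\alpha}$ is what lets the fractional damping absorb the $\eb\|\Dt u\|^2$ term, and that the computations should be justified on a Galerkin approximation — correctly fill in the details the paper leaves implicit.
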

Estimate \eqref{2.0} easily follows from multiplication of \eqref{eq ldwv} by $\Dt u+ru$ with small enough constant $r>0$. 

\begin{cor}
\label{cor lin L2(H 1+a)}
Let assumptions of Proposition \ref{prop lin basic} be satisfied and $u$ be a distributional solution of \eqref{eq ldwv}. Then, in addition, $u\in L^2([0,T];H^{1+\alpha}(\Omega))$ for any $T>0$ and the following estimate holds
\begin{equation}
\label{est lin L2(H 1+a)}
\|u(s)\|_{L^2([\max\{0,t-1\},t];H^{1+\alpha}(\Omega))}\leq C\(e^{-\beta t}\|\xi_0\|_\Cal{E}+\int_0^t\|h(s)\|e^{-\beta(t-s)}ds\),\ t\geq 0,
\end{equation}
for some $C,\beta>0$, which is independent of $t$, $\xi_0$ and $h(t)$.
\end{cor}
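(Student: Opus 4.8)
The plan is to recover the missing spatial regularity by testing equation \eqref{eq ldwv} with the multiplier $(-\Dx)^\alpha u$, which is tuned to the fractional damping: it turns the damping term into an exact time derivative while producing exactly the dissipation quantity that Proposition \ref{prop lin basic} already controls. Working first with smooth Galerkin approximations so that the pairing is legitimate, and writing $I_t:=[\max\{0,t-1\},t]$, I would evaluate the three resulting spatial pairings spectrally. The contribution of $\Dt^2u$ equals $\frac{d}{dt}(\Dt u,(-\Dx)^\alpha u)-\|(-\Dx)^{\alpha/2}\Dt u\|^2$; the damping contribution $\gamma((-\Dx)^\alpha\Dt u,(-\Dx)^\alpha u)$ is the full derivative $\frac{\gamma}{2}\frac{d}{dt}\|(-\Dx)^\alpha u\|^2$; and the elliptic contribution $-(\Dx u,(-\Dx)^\alpha u)$ equals $\|(-\Dx)^{(1+\alpha)/2}u\|^2$, which is equivalent to $\|u\|^2_{H^{1+\alpha}}$ since $1+\alpha<\frac32$ stays below the threshold where the Dirichlet domain of the fractional Laplacian differs from $H^{1+\alpha}(\Omega)$. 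Collecting these gives the pointwise-in-time identity
\begin{equation}
\label{eq cor ibp}
\|(-\Dx)^{(1+\alpha)/2}u\|^2+\frac{d}{dt}\left[(\Dt u,(-\Dx)^\alpha u)+\frac{\gamma}{2}\|(-\Dx)^\alpha u\|^2\right]=\|(-\Dx)^{\alpha/2}\Dt u\|^2+(h,(-\Dx)^\alpha u).
\end{equation}

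Next I would integrate \eqref{eq cor ibp} over $I_t$. The term $\int_{I_t}\|(-\Dx)^{\alpha/2}\Dt u\|^2\,d\tau\le\int_{I_t}\|\Dt u\|^2_{H^\alpha}\,d\tau$ is bounded by the square of the right-hand side of \eqref{2.0}: this is precisely the dissipation delivered by Proposition \ref{prop lin basic}, and it is the reason the multiplier $(-\Dx)^\alpha u$ is the right choice. The two time-boundary terms coming from the exact derivative are estimated at the endpoints $\tau\in\{\max\{0,t-1\},t\}$ by $\|\Dt u(\tau)\|\,\|(-\Dx)^\alpha u(\tau)\|+\|(-\Dx)^\alpha u(\tau)\|^2$; because $2\alpha<1$ we have $\|(-\Dx)^\alpha u(\tau)\|\le C\|u(\tau)\|_{H^1}\le C\|\xi_u(\tau)\|_\Cal{E}$, so they are controlled by the energy bound in \eqref{2.0}. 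The forcing pairing is handled by Cauchy--Schwarz, $\int_{I_t}(h,(-\Dx)^\alpha u)\,d\tau\le\int_{I_t}\|h(\tau)\|\,\|u(\tau)\|_{H^{2\alpha}}\,d\tau\le C\int_{I_t}\|h(\tau)\|\,\|\xi_u(\tau)\|_\Cal{E}\,d\tau$, again using $2\alpha<1$.

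It then remains to cast these bounds in the convolution form of \eqref{est lin L2(H 1+a)}. Writing $R(t):=e^{-\beta t}\|\xi_0\|_\Cal{E}+\int_0^te^{-\beta(t-s)}\|h(s)\|\,ds$ for the right-hand side of \eqref{2.0}, I would use the elementary fact that $R(\tau)\le e^{\beta}R(t)$ for $\tau\in I_t$ --- on an interval of length at most one both $e^{-\beta\tau}$ and the truncated convolution change by at most the factor $e^\beta$ --- to replace every endpoint energy $\|\xi_u(\tau)\|_\Cal{E}$ by $CR(t)$ via Proposition \ref{prop lin basic}. For the forcing term this gives $\int_{I_t}\|h(\tau)\|\,\|\xi_u(\tau)\|_\Cal{E}\,d\tau\le CR(t)\int_{I_t}\|h(\tau)\|\,d\tau\le CR(t)^2$, the last step using $\int_{I_t}\|h\|\,d\tau\le e^\beta\int_{I_t}e^{-\beta(t-\tau)}\|h\|\,d\tau\le CR(t)$. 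Adding up, $\int_{I_t}\|u\|^2_{H^{1+\alpha}}\,d\tau\le CR(t)^2$, which is exactly \eqref{est lin L2(H 1+a)} after taking square roots.

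The genuine difficulty is not this algebra but its rigorous justification: for a merely distributional (energy) solution the function $(-\Dx)^\alpha u$ need not be an admissible test function, and the integrations by parts in time (which produce the endpoint terms) and the spectral identities in space must be legitimised. I would do this by performing the computation on a Galerkin truncation --- or, equivalently, after smoothing the data $(u_0,u_1,h)$ --- where all manipulations are elementary, and then pass to the limit using the uniform bounds of Proposition \ref{prop lin basic} together with weak lower semicontinuity of the $L^2([0,T];H^{1+\alpha}(\Omega))$ norm. Once the correct multiplier is in place, everything else is routine.
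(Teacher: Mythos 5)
Your proposal is correct and follows essentially the same route as the paper: multiply \eqref{eq ldwv} by $(-\Dx)^\alpha u$, obtain the identity with the exact time derivative of $(\Dt u,(-\Dx)^\alpha u)+\frac{\gamma}{2}\|(-\Dx)^\alpha u\|^2$, integrate over $[\max\{0,t-1\},t]$, and absorb the dissipation, endpoint and forcing terms via Proposition \ref{prop lin basic}, with rigour supplied by Galerkin truncation. Your write-up is in fact slightly more explicit than the paper's in spelling out how the endpoint energies and $\int\|h\|$ are recast in the convolution form $e^{-\beta t}\|\xi_0\|_{\Cal E}+\int_0^te^{-\beta(t-s)}\|h(s)\|\,ds$, but the argument is the same.
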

\begin{proof}
The corollary easily follows from multiplication of \eqref{eq ldwv} by $(-\Dx)^\alpha u$ and Proposition \ref{prop lin basic}. Indeed, from \eqref{2.0} and $h\in L^1([0,T];L^2(\Omega))$ follows that product $(h(t),(-\Dx)^\alpha u)$ makes sense. To perform multiplication involving linear terms one can apply projector $P_N$ on the first $N$ eigenfunctions of $-\Dx$ to \eqref{eq ldwv}, multiply the obtained equation by $(-\Dx)^\alpha u_N$, where $u_N=P_N u$ and pass to the limit in subsequent estimates. Below we derive \eqref{est lin L2(H 1+a)} in formal way.

Multiplication of \eqref{eq ldwv} by $(-\Dx)^\alpha u$ gives
\begin{equation}
\frac{d}{dt}\((\Dt u,(-\Dx)^\alpha u)+\frac{\gamma}{2}\|(-\Dx)^\alpha u\|^2\)+\|(-\Dx)^\frac{1+\alpha}{2} u\|^2=(h,(-\Dx)^\alpha u)+\|(-\Dx)^\frac{\alpha}{2}\Dt u\|^2.
\end{equation}
Integrating the above inequality from $\tau(t)=\max\{0,t-1\}$ to $t$ and using Cauchy inequality one finds
\begin{multline}
\int_{\tau(t)}^t\|(-\Dx)^\frac{1+\alpha}{2}u(s)\|^2ds\leq C\(\int_{\tau(t)}^t\|h(s)\|\|u(s)\|_{H^1(\Omega)}ds+\int_{\tau(t)}^t\|\Dt u(s)\|^2_{H^\alpha(\Omega)}ds+\right.\\
\Bigg.\|\xi_u(\tau(t))\|^2_{\Cal E}+\|\xi_u(t)\|^2_{\Cal E}\Bigg).
\end{multline}
Taking into account \eqref{2.0}, the we derive
\begin{multline}
\int_{\tau(t)}^t\|(-\Dx)^\frac{1+\alpha}{2}u(s)\|^2ds\leq C\sup_{s\in[\tau(t),t]}\|u(s)\|_{H^1(\Omega)}\int_{\tau(t)}^t\|h(s)\|ds+\\C\(e^{-\beta t}\|\xi_0\|_\Cal{E}+\int_0^te^{-\beta(t-s)}\|h(s)\|ds\)^2\leq C\(e^{-\beta t}\|\xi_0\|_\Cal{E}+\int_0^te^{-\beta(t-s)}\|h(s)\|ds\)^2,
\end{multline}
that completes the proof.
\end{proof}
However to consider semi-linear damped wave equation with sub-quintic non-linearity we will need additional space-time regularity. Following the arguments from \cite{stri} we get this regularity from $L^p$ esteimates on spectral clusters obtained in
\cite{Lp clustrs} (Theorem 7.1), namely
\begin{theorem}\label{th Sogge}
Let $\Omega\subset\R^3$ be a smooth bounded domain, $\{e_k\}_{k=1}^\infty$ and $\{\lambda_k\}_{k=1}^\infty$ are eigenfunctions and eigenvalues of $-\Delta$ respectively and $\mathbf{P}_\lambda=\mathbf{1}_{\sqrt{-\Delta}\in[\lambda,\lambda+1]}$, that is $\mathbf{P}_\lambda$ is the spectral projector on those eigenfunctions $e_k$ that $\sqrt{\lambda_k}\in[\lambda,\lambda+1)$.
Then
\begin{equation}
\label{sp est}
\left\|\mathbf{P}_\lambda u\right\|_{L^5(\Omega)}\leq C\lambda^\frac{2}{5}\|u\|.
\end{equation} 
for some absolute constant $C$ that depends on $\Omega$ only.
\begin{rem}
Estimate \eqref{sp est} is highly non-trivial and its proof is strongly based on harmonic analysis tools. To understand why it is remarkable let us compare this result with Sobolev's embedding. Due to continuous embedding $H^\frac{9}{10}(\Omega)\subset L^5(\Omega)$ it is easy to see that $\|\mathbf{P}_\lambda u\|_{L^5(\Omega)}\leq C\|(-\Dx)^\frac{9}{20}\mathbf{P}_\lambda u\|\leq C\lambda^\frac{9}{10}\|u\|$. Therefore we see that estimate \eqref{sp est} gains us additional $\frac{1}{2}$ in exponent growth that is crucial point in obtaining Strichartz estimates.
\end{rem}
\end{theorem}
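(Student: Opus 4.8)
Since \eqref{sp est} is the $n=3$, $p=5$ instance of Sogge's spectral cluster bound---here in the range $p\geq\frac{2(n+1)}{n-1}=4$, where the sharp exponent is $\sigma(p)=\frac{n-1}{2}-\frac{n}{p}$ and $\sigma(5)=1-\frac35=\frac25$---the plan is to prove it through the half-wave propagator $U(t)=e^{it\sqrt{-\Delta}}$. First I would replace the sharp spectral cut-off by a smooth one. Fixing an even $\chi\in\mathcal S(\R)$ with $\chi(0)=1$ and $\widehat\chi$ supported in $\{|t|\leq\eb_0\}$, one has the functional-calculus identity
\begin{equation}
\chi\(\sqrt{-\Delta}-\lambda\)=\frac{1}{2\pi}\int_{\R}\widehat\chi(t)\,e^{-it\lambda}\,U(t)\,dt,
\end{equation}
and a standard almost-orthogonality argument reduces \eqref{sp est} to the operator bound $\|\chi(\sqrt{-\Delta}-\lambda)\|_{L^2(\Omega)\to L^5(\Omega)}\leq C\lambda^{2/5}$. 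By the $TT^*$ method this is in turn equivalent to bounding the self-adjoint operator $|\chi|^2(\sqrt{-\Delta}-\lambda)$ from $L^{5/4}(\Omega)$ to $L^5(\Omega)$ by $C\lambda^{4/5}$.

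The heart of the matter is that this bound cannot come from the size of the kernel alone. The elementary estimates all fall short of $\lambda^{2/5}$: Sobolev embedding gives $\lambda^{9/10}$ (as in the Remark), while interpolating the trivial $L^2\to L^2$ cluster bound with the diagonal $L^2\to L^\infty$ bound $\lambda^{(n-1)/2}$ yields only $\lambda^{(n-1)(\frac12-\frac1p)}=\lambda^{3/5}$; the gain down to $\lambda^{2/5}$ forces one to exploit the oscillation of the space--time kernel and the nonvanishing curvature of the light cone. Concretely, for $|t|\leq\eb_0$ and data supported well inside $\Omega$, finite speed of propagation keeps the solution in the interior, where $U(t)$ is a Fourier integral operator whose kernel, after the radial integration, is an oscillatory integral $\int_{S^{n-1}}e^{i\lambda\,\psi(t,x,y,\omega)}a\,d\omega$ with a nondegenerate Carleson--Sj\"olin phase inheriting the curvature of the sphere of directions. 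I would then invoke the oscillatory integral operator theorem of H\"ormander--Stein (the higher-dimensional Carleson--Sj\"olin estimate), which for such phases produces precisely the $L^2\to L^p$ bound with Sogge exponent in the range $p\geq\frac{2(n+1)}{n-1}$, hence $\lambda^{2/5}$ here.

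The main obstacle is the boundary $\partial\Omega$, which is exactly what makes \eqref{sp est} a theorem about bounded domains rather than the classical boundaryless statement. Finite speed of propagation localizes the problem for $|t|\leq\eb_0$, but near $\partial\Omega$ the interior Fourier integral representation of $U(t)$ breaks down: rays reflect, and one must contend with grazing and gliding rays for which no global parametrix of the above type exists. I would therefore split $\mathbf{P}_\lambda u$ microlocally into a piece supported away from the glancing set, handled by the interior and reflected parametrices together with the oscillatory integral estimate above, and a glancing piece. For the latter I would use a boundary parametrix of Melrose--Taylor type, combined with square-function and oscillatory-integral estimates adapted to the diffractive region, to bound its contribution again by $\lambda^{2/5}$ in the range $p\geq4$. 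This delicate boundary analysis is precisely the content of Theorem~7.1 of \cite{Lp clustrs}, and I expect it---rather than the interior estimate---to be the decisive and technically demanding step.
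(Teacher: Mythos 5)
The paper itself does not prove this statement: estimate \eqref{sp est} is imported verbatim as Theorem~7.1 of \cite{Lp clustrs}, and the accompanying Remark only contrasts it with the weaker Sobolev bound $\lambda^{9/10}$. Your proposal is an accurate roadmap of how such spectral cluster bounds are established --- smoothing the projector via $\chi(\sqrt{-\Delta}-\lambda)=\frac{1}{2\pi}\int\widehat\chi(t)e^{-it\lambda}e^{it\sqrt{-\Delta}}\,dt$, the $TT^*$ reduction, Carleson--Sj\"olin oscillatory integral bounds in the interior, with the exponent $\sigma(5)=\tfrac25$ and the interpolation figure $\lambda^{3/5}$ all computed correctly --- but at the decisive step, the analysis near $\partial\Omega$ where grazing and gliding rays destroy the interior parametrix, you explicitly defer to Theorem~7.1 of \cite{Lp clustrs}, i.e.\ to the very statement being proved. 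As a self-contained argument the proposal is therefore circular at exactly the point where all the difficulty lives; as a citation it coincides with what the paper does, so there is no substantive divergence to report. One factual caveat: the boundary case in \cite{Lp clustrs} is not handled by a Melrose--Taylor diffractive parametrix (that construction is adapted to geodesically concave boundaries and does not cover gliding rays on a general bounded domain); the published argument instead reflects the problem across $\partial\Omega$ and proves square-function estimates for the wave equation with the resulting metric of limited regularity via a wave-packet decomposition. This does not affect the logic of your reduction, but it means the mechanism you describe for the ``decisive and technically demanding step'' is not the one actually used in the reference you (and the paper) rely on.
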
 
As usual first one need to obtain the desired estimates for homogeneous equation \eqref{eq ldwv}.
To this end we prove some auxiliary but crucial result (along the lines of \cite{stri})
\begin{prop}\label{prop eitDx est}
Assume $\alpha\in(0,\frac{1}{2})$ and for some $2\leq q< \infty $ the spectral projector $\mathbf{P}_\lambda$ satisfies
\begin{equation}
\label{sp asmtn}
\|\mathbf{P}_\lambda u\|_{L^q(\Omega)}\leq\lambda^\delta\|u\|,\ \mbox{ where }\delta>0. 
\end{equation}
Then for any $u_0\in H^{\delta+\frac{1}{2}-\frac{1}{q}}$ the function $v(t,x)=e^{it\sqrt{-\Dx-\frac{\gamma^2}{4}(-\Dx)^{2\alpha}}}u_0$ belongs to $L^q([0,2\pi];\Omega)$ and the following estimate holds
\begin{equation}
\|v\|_{L^q([0,2\pi];\Omega)}\leq C\|u_0\|_{H^{\delta+\frac{1}{2}-\frac{1}{q}}(\Omega)},
\end{equation}
where $C$ depends on $\gamma$ and $\alpha$ only.
\end{prop}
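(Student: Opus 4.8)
The plan is to diagonalise in the eigenbasis of $-\Dx$ and reduce the space--time bound to the spatial cluster estimate \eqref{sp asmtn}, following the scheme of \cite{stri}. Write $\Phi(\mu)=\sqrt{\mu^2-\frac{\gamma^2}{4}\mu^{4\alpha}}$, so that $\Phi(\sqrt{-\Dx})=\sqrt{-\Dx-\frac{\gamma^2}{4}(-\Dx)^{2\alpha}}$ in the functional calculus and $v(t)=e^{it\Phi(\sqrt{-\Dx})}u_0$; abbreviate $\|f\|_q:=\|f\|_{L^q([0,2\pi]\times\Omega)}$ (keeping $\|\cdot\|$ for the $L^2(\Omega)$ norm). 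First I would remove the low frequencies. Since $\alpha<\frac12$, one has $4\alpha<2$, so the radicand $\mu^2-\frac{\gamma^2}{4}\mu^{4\alpha}$ is negative only for $\mu$ below a fixed threshold $\mu_0=\mu_0(\gamma,\alpha)$; only finitely many eigenvalues $\lambda_k$ satisfy $\sqrt{\lambda_k}\leq\mu_0+1$, and on the corresponding finite-dimensional subspace the evolution is a bounded operator on $[0,2\pi]$ whose $\|\cdot\|_q$-norm is dominated by $C\|u_0\|$. Hence it suffices to treat the part of $u_0$ spectrally supported in $\sqrt{-\Dx}>\mu_0+1$.

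On this high-frequency range $\Phi(\mu)=\mu\sqrt{1-\frac{\gamma^2}{4}\mu^{4\alpha-2}}$ is smooth, real and increasing, and satisfies the symbol bounds $|\Phi^{(j)}(\mu)|\leq C_j\mu^{1-j}$ with $\Phi'(\mu)=1+O(\mu^{4\alpha-2})\to1$. Thus $\Phi$ is a first-order elliptic symbol with principal part $\mu$, and $e^{it\Phi(\sqrt{-\Dx})}$ is a lower-order perturbation of the pure half-wave $e^{it\sqrt{-\Dx}}$; this is precisely what should let me run the argument of \cite{stri} with the extra term $\frac{\gamma^2}{4}(-\Dx)^{2\alpha}$ present.

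Next I would decompose $u_0=\sum_\lambda\mathbf{P}_\lambda u_0$ into unit clusters and set $v_\lambda(t)=e^{it\Phi(\sqrt{-\Dx})}\mathbf{P}_\lambda u_0$, so $v=\sum_\lambda v_\lambda$. For each fixed $t$ the function $v_\lambda(t)$ lies in the range of $\mathbf{P}_\lambda$ and has $L^2$-norm equal to $\|\mathbf{P}_\lambda u_0\|$; applying \eqref{sp asmtn} for every $t$ and integrating over $[0,2\pi]$ yields the per-cluster bound $\|v_\lambda\|_q\leq C\lambda^\delta\|\mathbf{P}_\lambda u_0\|$. Since $\Phi$ is increasing with $\Phi'\approx1$, the temporal Fourier spectrum of $v_\lambda$ lies in $I_\lambda=[\Phi(\lambda),\Phi(\lambda+1))$, and these are pairwise disjoint intervals of length $\approx1$. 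The remaining task is to sum over $\lambda$ while keeping the gain $\frac12-\frac1q$: using that the $v_\lambda$ are orthogonal in $L^2([0,2\pi]\times\Omega)$ and that their temporal frequencies occupy the disjoint unit intervals $I_\lambda$, one combines this orthogonality with a Rubio de Francia type square-function estimate in the time variable (as in \cite{stri}) to obtain
\[
\|v\|_q\leq C\left(\sum_\lambda\lambda^{2(\frac12-\frac1q)}\|v_\lambda\|_q^2\right)^{1/2}\leq C\left(\sum_\lambda\lambda^{2(\delta+\frac12-\frac1q)}\|\mathbf{P}_\lambda u_0\|^2\right)^{1/2}\leq C\|u_0\|_{H^{\delta+\frac12-\frac1q}(\Omega)},
\]
the last step being the equivalence $\|u_0\|_{H^s}^2\approx\sum_\lambda\lambda^{2s}\|\mathbf{P}_\lambda u_0\|^2$.

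The hard part will be this final summation. For $q>2$ the space $L^q$ is not Hilbert, so the clusters cannot be summed by orthogonality alone, and one genuinely needs the square-function machinery of \cite{stri}; the delicate point is to verify that replacing the symbol $\mu$ by $\Phi(\mu)$ preserves both the unit-width localisation of the temporal frequencies and the almost-orthogonality on which that argument relies, which is where the extra term $\frac{\gamma^2}{4}(-\Dx)^{2\alpha}$ enters. The low-frequency regime, where $\Phi$ turns imaginary and the evolution ceases to be oscillatory, must be excised beforehand, as above. Once these two points are secured, the stated estimate follows.
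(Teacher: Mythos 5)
Your skeleton matches the paper's: low modes are discarded as a finite-dimensional triviality, the cluster estimate \eqref{sp asmtn} gives the per-cluster bound $\|v_\lambda\|_{L^q([0,2\pi]\times\Omega)}\le C\lambda^\delta\|\mathbf{P}_\lambda u_0\|$, and the near-unit spacing $\Phi(m+1)-\Phi(m)\to1$ is exactly what the paper verifies. The gap is in the one step that carries all the content: the summation over clusters. The inequality you write,
\begin{equation*}
\|v\|_{L^q([0,2\pi]\times\Omega)}\le C\Big(\sum_\lambda\lambda^{2(\frac12-\frac1q)}\|v_\lambda\|_{L^q}^2\Big)^{1/2},
\end{equation*}
does not follow from a Rubio de Francia type reverse square-function bound: that machinery, where it applies, produces the \emph{unweighted} estimate $\|\sum_\lambda v_\lambda\|_{q}\lesssim(\sum_\lambda\|v_\lambda\|_{q}^2)^{1/2}$, which is false for $1$-separated frequencies --- the Dirichlet kernel $\sum_{j\le N}e^{ijt}$ has $L^q(0,2\pi)$ norm $\sim N^{1-\frac1q}$ against $(\sum_{j\le N}\|e^{ijt}\|_{L^q}^2)^{1/2}\sim N^{\frac12}$, so the weight $N^{\frac12-\frac1q}$ is precisely the unavoidable loss, not something the square-function tool supplies. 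The weighted inequality is instead produced by an elementary mechanism: Plancherel in $t$ on $(0,2\pi)$ followed by the one-dimensional Sobolev embedding $H^{\frac12-\frac1q}(0,2\pi)\subset L^q(0,2\pi)$ and Minkowski (this is \eqref{0.9} in the paper). But Plancherel in $t$ on a compact interval requires the temporal frequencies to be integers, i.e.\ the evolution to be $2\pi$-periodic, and $e^{it\Phi(\sqrt{-\Dx})}$ is not; your statement that ``the temporal Fourier spectrum of $v_\lambda$ lies in $I_\lambda$'' has no clean meaning for a non-periodic function restricted to $[0,2\pi]$, and truncating in time smears the spectra so the $I_\lambda$ are no longer disjoint.

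The paper resolves this with two devices absent from your proposal. First it replaces $\sqrt{m^2-\frac{\gamma^2}{4}m^{4\alpha}}$ by its \emph{integer part}, defining an auxiliary operator $A$ with integer spectrum so that $e^{itA}$ is $2\pi$-periodic and the Fourier-series/Sobolev argument applies verbatim (each integer frequency collects at most two clusters, by the spacing computation you also sketch). Second, it recovers $e^{it\sqrt{-\Dx-\frac{\gamma^2}{4}(-\Dx)^{2\alpha}}}$ from $e^{itA}$ via the Duhamel formula \eqref{2.2}, using that the symbol error $\sqrt{-\Dx-\frac{\gamma^2}{4}(-\Dx)^{2\alpha}}-A$ is a bounded operator, so the correction is controlled by Minkowski together with the already-proved bound \eqref{2.1}. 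Without the periodization and the Duhamel correction --- or a genuine, correctly weighted almost-orthogonality substitute for $1$-separated real frequencies on a finite interval --- the decisive step of the proof is missing.
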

\begin{rem}
Obviously, since $\mathbf{P}_\lambda$ is a projector, \eqref{sp asmtn} implies
\begin{equation}
\|\mathbf{P}_\lambda u\|_{L^q(\Omega)}\leq\lambda^\delta\|\mathbf{P}_\lambda u\|, 
\end{equation}
that will be used below.
\end{rem}
\begin{proof}[Proof of Proposition \ref{prop eitDx est}.]
Let us define an abstract self-adjoint operator by 
\begin{equation}
Ae_n=\left[\sqrt{k^2-\frac{\gamma^2}{4}k^{4\alpha}}\right]e_n,\quad\mbox{ for those }n\geq 1:\sqrt{\lambda_n}\in[k,k+1),
\end{equation}
where $[\ \cdot\ ]$ denotes the integer part of a number and $\{e_n\}_{n=1}^\infty $ are eigenfunctions of $-\Dx $ with Dirichlet boundary conditions. First we want to prove the estimate for $\tilde{v}(t)=e^{itA}u_0 $. To this end it is convenient to estimate lower and higher Fourier modes separately, that is we represent $\tilde{v}(t)$ in the form 
\begin{equation}
\tilde{v}(t)=\Cal{P}_N\tilde{v}(t)+\Cal{Q}_N\tilde{v}(t)=e^{itA}\Cal{P}_Nu_0+e^{itA}\Cal{Q}_Nu_0,
\end{equation}
where $\Cal{P}_N$ is orthoprojector on the first $N$ eigenfunctions of $-\Dx$ and $\Cal{Q}_N=Id-\Cal{P}_N$. We set $N$ to be such a number that $\left[\sqrt{[\sqrt{\lambda_n}]^2-\frac{\gamma^2}{4}[\sqrt{\lambda_n}]^{4\alpha}}\right]\geq K$ for all $n\geq N+1$ and $K$ is large enough to be fixed bellow.

The estimate of lower modes is simple since we have only finite number of them. Indeed,
\begin{multline}
\|\Cal{P}_N\tilde{v}(t)\|_{L^q([0,2\pi];\Omega)}\leq\|\|\sum_{n=1}^Ne^{-t\left[\sqrt{\frac{\gamma^2}{4}[\sqrt{\lambda_n}]^{4\alpha}-[\sqrt{\lambda_n}]^2}\right]}(u_0,e_n)e_n\|_{L^q(\Omega)}\|_{L^q([0,2\pi])}\leq\\
\|\sum_{n=1}^N|(u_0,e_n)|\|e_n\|_{L^q(\Omega)}\|_{L^q([0,2\pi])}\leq \|C_{N,q}\sum_{n=1}^N|(u_0,e_n)|\|_{L^q([0,2\pi])}\leq (2\pi)^\frac{1}{q}C_{N,q}\sum_{n=1}^N|(u_0,e_n)|\leq\\
(2\pi)^\frac{1}{q}C_{N,q}\sqrt{N}\sqrt{\sum_{n=1}^N|(u_0,e_n)|^2}\leq (2\pi)^\frac{1}{q}C_{N,q}\sqrt{N} \|\Cal{P}_Nu_0\|,   
\end{multline}
where $C_{N,q}=max_{n=\overline{1,n}}\|e_n\|_{L^q(\Omega)}$. That is
\begin{equation}
\label{2.02}
\|\Cal{P}_N\tilde{v}(t)\|_{L^q([0,2\pi];\Omega)}\leq C_{N,q}\|\Cal{P}_Nu_0\|, 
\end{equation}
for some $C_{N,q}>0$.

The estimate of higher modes is more delicate. Writing down $\Cal{Q}_N\tilde{v}$ via eigenfunctions of $-\Dx$ we see that
\begin{equation}
\Cal{Q}_N\tilde{v}(t,x)=\sum_{m=N+1}^\infty e^{it\left[\sqrt{m^2-\frac{\gamma^2}{4}m^{4\alpha}}\right]}\mathbf{P}_mu_0=\sum_{k=K}^\infty e^{itk}\sum_{k\leq\sqrt{m^2-\frac{\gamma^2}{4}m^{4\alpha}}<k+1}\mathbf{P}_mu_0=: \sum_{k=K}^\infty e^{itk}\tilde{v}_k(x).
\end{equation}
Hence according to Plancherel's formula for a fixed x we get
\begin{equation}
\|\Cal{Q}_N\tilde{v}(\cdot,x)\|^2_{H^s(0,2\pi)}=2\pi\sum_{k=K}^\infty(1+k^2)^s|\tilde{v}_k(x)|^2.
\end{equation}
Now using that in 1 dimensional case $H^{s_0}(0,2\pi)\subset L^q(0,2\pi)$ for $s_0=\frac{1}{2}-\frac{1}{q}$ we deduce
\begin{multline}
\|\Cal{Q}_N\tilde{v}\|^2_{L^q([0,2\pi];\Omega)}=\| \|\Cal{Q}_N\tilde{v}\|_{L^q(0,2\pi)}\|^2_{L^q(\Omega)}\leq C\|\|\Cal{Q}_N\tilde{v}\|_{H^{s_0}(0,2\pi)}\|^2_{L^q(\Omega)}=\\
C\| \|\Cal{Q}_N\tilde{v}\|^2_{H^{s_0}(0,2\pi)}\|_{L^{\frac{q}{2}}(\Omega)}\leq C\|\sum_{k=K}^\infty(1+k^2)^{\frac{1}{2}-\frac{1}{q}}|\tilde{v}_k(x)|^2\|_{L^\frac{q}{2}(\Omega)}.
\end{multline}
Since $q\geq 2$, by Minkowski inequality we obtain
\begin{equation}
\label{0.9}
\|\Cal{Q}_N\tilde{v}\|^2_{L^q([0,2\pi];\Omega)}\leq C\sum_{k=K}^\infty(1+k^2)^{\frac{1}{2}-\frac{1}{q}}\||\tilde{v}_k(x)|^2\|_{L^\frac{q}{2}(\Omega)}=C\sum_{k=K}^\infty(1+k^2)^{\frac{1}{2}-\frac{1}{q}}\|\tilde{v}_k\|^2_{L^q(\Omega)}.
\end{equation}
To estimate $\|\tilde{v}_k(x)\|^2_{L^q(\Omega)}$ we notice that
\begin{multline}
\sqrt{(m+1)^2-\frac{\gamma^2}{4}(m+1)^{4\alpha}}-\sqrt{m^2-\frac{\gamma^2}{4}m^{4\alpha}}=\\
\frac{2m+1-\frac{\gamma^2}{4}\((m+1)^{4\alpha}-m^{4\alpha}\)}{\sqrt{(m+1)^2-\frac{\gamma^2}{4}(m+1)^{4\alpha}}+\sqrt{m^2-\frac{\gamma^2}{4}m^{4\alpha}}}\to 1,\quad\mbox{as }m\to\infty,
\end{multline}
since $(m+1)^{4\alpha}-m^{4\alpha}\sim c(\alpha) m^{4\alpha-1}$ as $m$ goes to infinity for some constant $c(\alpha)$. Consequently, we choose $K$ so large that we have either $\tilde{v}_k(x)=\mathbf{P}_{m(k)}u_0$ or $\tilde{v}_k(x)=\mathbf{P}_{m(k)}u_0+\mathbf{P}_{m(k)+1}u_0$.
Thus in the worst case we have 
\begin{multline}
\|\tilde{v}_k(x)\|_{L^q(\Omega)}\leq (m(k)+1)^\delta\(\|\mathbf{P}_{m(k)}u_0\|+\|\mathbf{P}_{m(k)+1}u_0\|\)\leq\\ 2^\frac\delta2(m(k)^2+1)^\frac\delta2(\|\mathbf{P}_{m(k)}u_0\|+\|\mathbf{P}_{m(k)+1}u_0\|),
\end{multline}
and hence
\begin{equation}
\|\tilde{v}_k\|^2_{L^q(\Omega)}\leq 2^{\delta+1}(m(k)^2+1)^\delta\(\|\mathbf{P}_{m(k)}u_0\|^2+\|\mathbf{P}_{m(k)+1}u_0\|^2\).
\end{equation}
Using the fact that $k^2\leq m(k)^2-\frac{\gamma^2}{4}m(k)^{4\alpha}\leq m(k)^2\leq \(m(k)+1\)^2$ and the previous inequality we derive
\begin{multline}
\label{1}
\sum_{k=K}^\infty(1+k^2)^{\frac{1}{2}-\frac{1}{q}}\|\tilde{v}_k\|^2_{L^q(\Omega)}\leq\\
2^{\delta+1}\sum_{k=K}^\infty(1+k^2)^{\frac{1}{2}-\frac{1}{q}}\(m(k)^2+1\)^\delta\(\|\mathbf{P}_{m(k)}u_0\|^2+\|\mathbf{P}_{m(k)+1}u_0\|^2\)\leq\\
2^{\delta+2}\sum_{m=m(K)}^\infty\(m^2+1\)^{\frac{1}{2}-\frac{1}{q}+\delta}\|P_mu_0\|^2. 
\end{multline}

Thus from \eqref{0.9}, \eqref{1}, \eqref{2.02} one concludes
\begin{equation}
\label{2.1}
\|\tilde{v}\|^2_{L^q([0,2\pi];\Omega)}\leq C_{\alpha,\gamma}\sum_{m=0}^\infty(1+m^2)^{\frac{1}{2}-\frac{1}{q}+\delta}\|\mathbf{P}_mu_0\|^2\sim C_{\alpha,\gamma}\|u_0\|^2_{H^{\frac{1}{2}-\frac{1}{q}+\delta}(\Omega)}.
\end{equation}
To obtain the estimate for $v(t)=e^{it\sqrt{-\Dx-\frac{\gamma^2}{4}(-\Dx)^{2\alpha}}}u_0$ we just notice that
$v(t)$ solves the equation
\begin{equation}
\begin{cases}
\Dt v-iAv=i\(\sqrt{-\Dx-\frac{\gamma^2}{4}(-\Dx)^{2\alpha}}-A\)v,\\
v(0)=u_0, \quad v|_{\partial\Omega}=0.
\end{cases}
\end{equation}
Hence by Duhamel's formula we see that
\begin{equation}
\label{2.2}
v(t)=e^{iAt}u_0+i\int_0^te^{iA(t-s)}\(\sqrt{-\Dx-\frac{\gamma^2}{4}(-\Dx)^{2\alpha}}-A\)v(s)ds.
\end{equation}
Due to \eqref{2.1} the estimate for $e^{iAt}u_0$ is immediate and the integral term in \eqref{2.2} can be estimated by Minkowski inequality (for brevity we use notation $\tilde{A}:=\(\sqrt{-\Dx-\frac{\gamma^2}{4}(-\Dx)^{2\alpha}}-A\)$)
\begin{multline}
\label{2.21}
\|\int_0^te^{iA(t-s)}\tilde{A}v(x,s)ds\|_{L^q([0,2\pi];\Omega)}=\|\|\int_0^te^{iA(t-s)}\tilde{A}v(x,s)ds\|_{L^q_x(\Omega)}\|_{L^q_t(0,2\pi)}\leq\\
\|\int_0^{2\pi}\|e^{iA(t-s)}\tilde{A}v(x,s)\|_{L^q_x(\Omega)}ds\|_{L^q_t(0,2\pi)}\leq\int_0^{2\pi}\|\|e^{iA(t-s)}\tilde{A}v(x,s)\|_{L^q_x(\Omega)}\|_{L^q_t(0,2\pi)}ds=\\
\int_0^{2\pi}\|e^{iAt}\(e^{-iAs}\tilde{A}v(x,s)\)\|_{L^q([0,2\pi];\Omega)}ds.
\end{multline}
Finally using \eqref{2.1} and the fact that $e^{-iAs}\tilde{A}e^{is\sqrt{-\Dx-\frac{\gamma^2}{4}(-\Dx)^{2\alpha}}}\in\mathcal{L}\(H^{\frac{1}{2}-\frac{1}{q}+\delta}(\Omega)\)$ we obtain ( assuming that $t\leq 2\pi$)
\begin{multline}
\label{2.22}
\|\int_0^te^{iA(t-s)}\tilde{A}v(x,s)ds\|_{L^q([0,2\pi];\Omega)}\leq \int_0^{2\pi}\|e^{-iAs}\tilde{A}v(x,s)\|_{H^{\frac{1}{2}-\frac{1}{q}+\delta}(\Omega)}ds=\\
\int_0^{2\pi}\|e^{-iAs}\tilde{A}e^{is\sqrt{-\Dx-\frac{\gamma^2}{4}(-\Dx)^{2\alpha}}}u_0\|_{H^{\frac{1}{2}-\frac{1}{q}+\delta}(\Omega)}ds\leq C\|u_0\|_{H^{\frac{1}{2}-\frac{1}{q}+\delta}(\Omega)},
\end{multline}
that finishes the proof.
\end{proof}

\begin{cor}
\label{cor homstri}
Let $u$ satisfies \eqref{eq ldwv} in the sense of distributions with $h(t)=0$ and inital  data be such that $u_0\in H^1_0(\Omega)$, $u_1\in L^2(\Omega)$. Then $u$ possesses the following space time regularity
\begin{equation}
\|u\|_{L^5([0,t];L^{10}(\Omega))}\leq C\(\|u_0\|_{H^1(\Omega)}+\|u_0\|\),\ t\in(0,2\pi],
\end{equation}
for some positive constant $C$ that depends on $\alpha$ and $\gamma$ only.
\end{cor}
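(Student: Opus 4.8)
The plan is to reduce the claim to the homogeneous propagator estimate already packaged in Proposition \ref{prop eitDx est}, and then to bridge the gap between the \emph{diagonal} space-time norm $L^5_tL^5_x$ that this proposition delivers and the \emph{off-diagonal} norm $L^5_tL^{10}_x$ demanded here, by spending a fractional spatial derivative. First I would pass to the modified unknown $v(t)=e^{\frac{\gamma}{2}(-\Dx)^\alpha t}u(t)$, which by the computation in the introduction solves \eqref{eq lwvm}, i.e. $\Dt^2 v+B^2v=0$ with $B:=\sqrt{-\Dx-\frac{\gamma^2}{4}(-\Dx)^{2\alpha}}$, subject to $v(0)=u_0$ and $\Dt v(0)=v_1:=u_1+\frac{\gamma}{2}(-\Dx)^\alpha u_0$. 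Since $e^{-\frac{\gamma}{2}(-\Dx)^\alpha t}$ is bounded on $L^{10}(\Omega)$ uniformly for $t\ge 0$, inverting the change of variables gives $\|u(t)\|_{L^{10}}\le C\|v(t)\|_{L^{10}}$, hence $\|u\|_{L^5([0,t];L^{10})}\le C\|v\|_{L^5([0,t];L^{10})}$, so it suffices to estimate $v$. Writing $v(t)=\tfrac12(e^{itB}+e^{-itB})u_0+\tfrac{1}{2i}B^{-1}(e^{itB}-e^{-itB})v_1$ displays $v$ as a fixed combination of the half-wave propagators $e^{\pm itB}$ applied to $u_0$ and to $B^{-1}v_1$.

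The crucial step is to upgrade the conclusion of Proposition \ref{prop eitDx est}. Applied with $q=5$ and $\delta=\tfrac25$ (legitimate by the spectral cluster bound \eqref{sp est}), it reads $\|e^{itB}g\|_{L^5([0,2\pi];L^5)}\le C\|g\|_{H^{7/10}}$, the same estimate holding for $e^{-itB}$ by running the identical argument. Because $B$ is a function of $-\Dx$, the operator $(-\Dx)^{3/20}$ commutes with $e^{\pm itB}$, so the substitution $g=(-\Dx)^{3/20}w$ yields $\|(-\Dx)^{3/20}e^{itB}w\|_{L^5_tL^5_x}\le C\|(-\Dx)^{3/20}w\|_{H^{7/10}}=C\|w\|_{H^1}$. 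Invoking then the Sobolev embedding $(-\Dx)^{-3/20}\colon L^5(\Omega)\to L^{10}(\Omega)$ for each fixed $t$ — valid since in $\R^3$ one has $\tfrac1{10}=\tfrac15-\tfrac{3/10}{3}$ and $3/10<\tfrac12$ sees no boundary condition — I obtain $\|e^{\pm itB}w\|_{L^5([0,2\pi];L^{10})}\le C\|w\|_{H^1}$. This is precisely the energy-level Strichartz pair $(5,10)$, recovered from the diagonal estimate by trading the $\tfrac3{10}$ derivatives that separate $H^{7/10}$ from $H^1$.

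It then remains to feed in the two data pieces. For the $u_0$-part one uses $u_0\in H^1_0(\Omega)$ directly. For the $v_1$-part I must verify $B^{-1}v_1\in H^1$ with the right bound: on the high modes $B\sim(-\Dx)^{1/2}$, so $B^{-1}\sim(-\Dx)^{-1/2}$ maps $L^2$ into $H^1$, while $v_1\in L^2$ because $u_1\in L^2$ and, since $\alpha<\tfrac12$ and $u_0\in H^1$, $(-\Dx)^\alpha u_0\in H^{1-2\alpha}\subset L^2$; thus $\|B^{-1}v_1\|_{H^1}\le C(\|u_1\|+\|u_0\|_{H^1})$. The finitely many low modes on which $B^2\le 0$ contribute only a finite-dimensional smooth piece whose $L^5_tL^{10}_x$ norm over $[0,2\pi]$ is trivially controlled by $\|u_0\|+\|u_1\|$ — exactly the low-mode bound \eqref{2.02} occurring inside the proof of Proposition \ref{prop eitDx est}. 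Summing the four propagator contributions gives $\|v\|_{L^5([0,2\pi];L^{10})}\le C(\|u_0\|_{H^1}+\|u_1\|)$, and the reduction of the first paragraph yields the assertion for $t\in(0,2\pi]$.

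The main obstacle is the space-exponent mismatch: Proposition \ref{prop eitDx est} is intrinsically diagonal (equal time and space exponents), whereas the quintic-critical Strichartz norm is the off-diagonal $L^5_tL^{10}_x$. Interpolating two diagonal estimates can never break the diagonal, so the only viable device is the spatial Sobolev upgrade above, and the argument lives or dies on the exponents closing exactly at the energy level, namely that $\tfrac3{10}$ derivatives suffice to go from $L^5$ to $L^{10}$ in $\R^3$ and that $\tfrac7{10}+\tfrac3{10}=1$. A lesser technical nuisance is the non-positivity of $B^2$ on low frequencies, which is harmless but must be acknowledged in defining $B^{-1}\sin(tB)$ and in estimating the low-mode part.
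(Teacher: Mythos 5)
Your proposal is correct and follows essentially the same route as the paper: the change of variables $v=e^{\frac{\gamma}{2}(-\Dx)^\alpha t}u$, the application of Proposition \ref{prop eitDx est} with $q=5$, $\delta=\frac25$ to $(-\Dx)^{3/20}$ of the data followed by the Sobolev embedding $W^{\frac{3}{10},5}(\Omega)\subset L^{10}(\Omega)$, the $\cos$/$\sin$ (half-wave) representation of $v$, and the boundedness of $e^{-\frac{\gamma}{2}(-\Dx)^\alpha t}$ on $L^{10}(\Omega)$. Your extra verification that $B^{-1}v_1\in H^1$ and the explicit treatment of the low modes merely fill in details the paper leaves implicit.
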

\begin{proof}
First, one notices that
\begin{equation}
\label{2.3}
\|e^{it\sqrt{-\Dx-\frac{\gamma^2}{4}(-\Dx)^{2\alpha}}}u_0\|_{L^5([0,2\pi];L^{10}(\Omega))}\leq C\|u_0\|_{H^1}.
\end{equation}
Indeed, $(-\Dx)^\frac{3}{20}u_0\in H^\frac{7}{10}(\Omega)=H^{\frac{1}{2}-\frac{1}{5}+\frac{2}{5}}(\Omega)$. Hence we are able to use Theorem \ref{th Sogge} and Proposition \ref{prop eitDx est} with $q=5$, $\delta=\frac{2}{5}$ and $(-\Dx)^\frac{3}{20}u_0$ instead of $u_0$ that yields
\begin{equation}
\|e^{it\sqrt{-\Dx-\frac{\gamma^2}{4}(-\Dx)^{2\alpha}}}u_0\|_{L^5([0,2\pi];W^{\frac{3}{10},5}(\Omega))}\leq C\|u_0\|_{H^1}, 
\end{equation}
and continuous embedding $W^{\frac{3}{10},5}(\Omega)\subset L^{10}(\Omega)$ gives \eqref{2.3}. Moreover, estimate \eqref{2.3} implies that
\begin{equation}
\label{2.4}
\|\cos \( t \sqrt{ -\Dx -\frac{\gamma^2}{4}(-\Dx)^{2\alpha}}\)w\|_{L^5\([0,2\pi];L^{10}(\Omega)\)}\leq C\|w\|_{H^1(\Omega)},\ \forall w\in H^1_0(\Omega),
\end{equation}
\begin{equation}
\label{2.5}
\|\sin\(t\sqrt {-\Dx-\frac{\gamma^2}{4}(-\Dx)^{2\alpha}}\)w\|_{L^5\([0,2\pi];L^{10}(\Omega)\)}\leq C\|w\|_{H^1(\Omega)},\ \forall w\in H^1_0(\Omega),
\end{equation}
since $\sin(x)$ and $\cos(x)$ are linear combinations of $e^{ix}$ and $e^{-ix}$.
 
Second, we see that $v$ which solves equation
\begin{equation}
\label{2.6}
\begin{cases}
\Dt^2 v-\Dx v-\frac{\gamma^2}{4}(-\Dx)^{2\alpha}v=0,\\
v|_{\partial\Omega}=0, \ v(0)=u_0:=v_0\in H^1_0(\Omega),\ \Dt v(0)=\frac{\gamma}{2}(-\Dx)^\alpha u_0+u_1:=v_1\in L^2(\Omega),
\end{cases}
\end{equation} 
also satisfies the estimate
\begin{equation}
\label{2.61}
\|v\|_{L^5([0,2\pi];\Omega)}\leq C(\|u_0\|_{H^1(\Omega)}+\|u_1\|),
\end{equation}
with some positive constant $C$, due to the fact that it can be written as follows
\begin{equation}
v(t)=\cos \( t \sqrt{ -\Dx -\frac{\gamma^2}{4}(-\Dx)^{2\alpha}}\)v(0)+\frac{\sin\(t\sqrt {-\Dx-\frac{\gamma^2}{4}(-\Dx)^{2\alpha}}\)}{\sqrt {-\Dx-\frac{\gamma^2}{4}(-\Dx)^{2\alpha}}}\Dt v(0).
\end{equation}
The last step is to notice that solution $u(t)$ of homogeneous problem \eqref{eq ldwv} is related to solution $v(t)$ of \eqref{2.6} by
\begin{equation}
\label{2.7}
u(t)=e^{-\frac{\gamma}{2}(-\Delta)^\alpha t}v(t),
\end{equation} 
which actually was the initial observation how we could deduce Strichartz  estimates for damped wave equation via ordinary wave equation (that was done above). Operators $e^{-\frac{\gamma}{2}(-\Delta)^\alpha t}$, $t\geq 0$, define an analytic semigroup in $L^{10}(\Omega)$ (see Chapter IX, section 11, \cite{yos}). Thus operators $e^{-\frac{\gamma}{2}(-\Dx)^\alpha t}$ are bounded from $L^{10}(\Omega)$ to $L^{10}(\Omega)$ and we have $\|u(t)\|_{L^{10}(\Omega)}\leq\|v(t)\|_{L^{10}(\Omega)}$ that together with \eqref{2.61} completes the proof.
\end{proof}  
Now we are ready to prove inhomogeneous Strichartz estimate in linear case.
\begin{cor}
\label{cor nonhomstri}
Let $u$ be an energy solution of problem \eqref{eq ldwv} with $h(t)\in L^1([0,2\pi];L^2(\Omega))$ where $\theta\in[0,1]$ and initial data $\xi_0\in\Cal{E}$. Then $u$ possesses the following space time regularity
\begin{equation}
\|u\|_{L^5([0,T];L^{10}(\Omega))}\leq C\(\|\xi_0\|_{\Cal E}+\|h\|_{L^1([0,T];L^2(\Omega))}\),\ T\in(0,2\pi],
\end{equation}
for some positive constant $C$ which does not depend on $T\in (0,2\pi]$ and $\xi_0$.
\end{cor}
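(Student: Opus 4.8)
The plan is to deduce the inhomogeneous estimate from the homogeneous one (Corollary \ref{cor homstri}) by Duhamel's principle, exploiting the autonomy of \eqref{eq ldwv}. Writing the equation as a first order system for $\xi_u=(u,\Dt u)$ and denoting by $\Cal{U}(t)\xi_0$ the associated homogeneous evolution, Duhamel's formula represents the energy solution as
\begin{equation*}
u(t)=[\Cal{U}(t)\xi_0]_1+\int_0^t[\Cal{U}(t-s)(0,h(s))]_1\,ds,
\end{equation*}
where $[\ \cdot\ ]_1$ extracts the first (that is, the $u$-)component. The structural point I would emphasise is that the forcing $h$ enters the Duhamel term as an \emph{initial velocity} paired with vanishing initial position; this is precisely the configuration that Corollary \ref{cor homstri} bounds in terms of $\|h(s)\|$ alone.

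For the free part I would simply invoke Corollary \ref{cor homstri}, obtaining
\begin{equation*}
\|[\Cal{U}(\cdot)\xi_0]_1\|_{L^5([0,T];L^{10}(\Omega))}\leq C\(\|u_0\|_{H^1(\Omega)}+\|u_1\|\)\leq C\|\xi_0\|_{\Cal E}
\end{equation*}
for every $T\in(0,2\pi]$, with $C$ independent of $T$ since the homogeneous bound already holds on the full window $[0,2\pi]$ and monotonicity of the norm in $T$ does the rest.

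For the Duhamel term the plan is to apply Minkowski's integral inequality, pulling the $L^5_tL^{10}_x$ norm inside the $s$-integration:
\begin{equation*}
\left\|\int_0^t[\Cal{U}(t-s)(0,h(s))]_1\,ds\right\|_{L^5([0,T];L^{10})}\leq\int_0^T\left\|[\Cal{U}(\cdot-s)(0,h(s))]_1\right\|_{L^5([s,T];L^{10})}\,ds.
\end{equation*}
Causality restricts the outer $L^5_t$ norm to $[s,T]$, and the autonomy of \eqref{eq ldwv} lets me translate in time, so that for each fixed $s$ the inner norm is that of a homogeneous solution on $[0,T-s]\subset[0,2\pi]$ with data $(0,h(s))$. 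Applying Corollary \ref{cor homstri} to this data yields the bound $C\|h(s)\|$, and integration in $s$ produces $C\|h\|_{L^1([0,T];L^2(\Omega))}$. Summing the two contributions gives the assertion.

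The single point requiring care — and the one I would flag as the main, if modest, obstacle — is the uniformity of the constant: one must verify that translating the source time $s\in[0,T]$ never pushes the evolution outside the window $[0,2\pi]$ on which Corollary \ref{cor homstri} was established. This is automatic, since $T\leq 2\pi$ forces $T-s\leq 2\pi$, so the shifted interval $[0,T-s]$ remains admissible and the homogeneous constant can be used verbatim, independently of both $T$ and $s$. I would also note that no regularity of $h$ beyond $L^1_tL^2_x$ is required here, precisely because each Duhamel slice is measured solely through its $L^2$ velocity datum, which is exactly what the velocity contribution in Corollary \ref{cor homstri} supplies.
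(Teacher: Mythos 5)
Your proposal is correct and takes essentially the same route as the paper: the paper simply writes your abstract Duhamel representation explicitly, via the propagators $e^{-\frac{\gamma}{2}(-\Delta)^\alpha t}\cos\bigl(t\sqrt{-\Delta-\tfrac{\gamma^2}{4}(-\Delta)^{2\alpha}}\bigr)$ and the corresponding sine operator applied to the slice data $(0,h(s))$, and then estimates the inhomogeneous term exactly as you do, by Minkowski's integral inequality followed by the homogeneous Strichartz bound (in the form of estimate \eqref{2.5} together with the $L^{10}$-boundedness of the analytic semigroup $e^{-\frac{\gamma}{2}(-\Delta)^\alpha t}$). The only cosmetic difference is that you invoke Corollary \ref{cor homstri} as a black box for each slice while the paper unpacks the formula; both yield the same $T$-independent constant on $(0,2\pi]$.
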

\begin{proof}
Let us fix some $T\in(0,2\pi]$. Due to the fact that solution of inhomogeneous problem \eqref{eq ldwv} can be written as follows
\begin{multline}
u(t)=e^{-\frac\gamma2(-\Dx)^\alpha t}\cos\(t\sqrt{-\Dx-\frac{\gamma^2}{4}(-\Dx)^{2\alpha}}\)u_0+e^{-\frac\gamma2(-\Dx)^\alpha t}\frac{\sin\(t\sqrt{-\Dx-\frac{\gamma^2}{4}(-\Dx)^{2\alpha}}\)
}{\sqrt{-\Dx-\frac{\gamma^2}{4}(-\Dx)^{2\alpha}}}u_1+\\
\int_0^te^{-\frac\gamma2(-\Dx)^\alpha (t-s)}\frac{\sin\((t-s)\sqrt{-\Dx-\frac{\gamma^2}{4}(-\Dx)^{2\alpha}}\)
}{\sqrt{-\Dx-\frac{\gamma^2}{4}(-\Dx)^{2\alpha}}}h(s)ds:=R_1+R_2+R_3.
\end{multline}
and Corollary \ref{cor homstri} it remains to estimate $L^5([0,T];L^{10}(\Omega))$ norm of $R_3$. Using Minkowski inequality, the fact that $e^{-\frac{\gamma}{2}(-\Dx)^\alpha(t-s)}$ is analytic in $L^{10}(\Omega)$ we derive
\begin{multline}
\|R_3\|_{L^5([0,T];L^{10}(\Omega))}\leq \\
\|\int_0^t\|e^{-\frac\gamma2(-\Dx)^\alpha (t-s)}\frac{\sin\((t-s)\sqrt{-\Dx-\frac{\gamma^2}{4}(-\Dx)^{2\alpha}}\)}{\sqrt{-\Dx-\frac{\gamma^2}{4}(-\Dx)^{2\alpha}}}h(s)\|_{L^{10}_x(\Omega)}ds\|_{L^5_t([0,T])}\leq\\
\int_0^T\|\|\frac{\sin\((t-s)\sqrt{-\Dx-\frac{\gamma^2}{4}(-\Dx)^{2\alpha}}\)}{\sqrt{-\Dx-\frac{\gamma^2}{4}(-\Dx)^{2\alpha}}}h(s)\|_{L^{10}_x(\Omega)}\|_{L^5_t([0,T])}ds.
\end{multline}
Now let us consider $\frac{\sin\((t-s)\sqrt{-\Dx-\frac{\gamma^2}{4}(-\Dx)^{2\alpha}}\)}{\sqrt{-\Dx-\frac{\gamma^2}{4}(-\Dx)^{2\alpha}}}h(s)$ as function of $t$ for a fixed $s$. Since $\sin(x)$ is odd, inequality \eqref{2.5} implies 
\begin{equation}
\|\|\frac{\sin\((t-s)\sqrt{-\Dx-\frac{\gamma^2}{4}(-\Dx)^{2\alpha}}\)}{\sqrt{-\Dx-\frac{\gamma^2}{4}(-\Dx)^{2\alpha}}}h(s)\|_{L^{10}_x(\Omega)}\|_{L^5_t([0,T])}\leq C\|h(s)\|, \quad for\ almost\ all\ s\in[0,T],
\end{equation}
and therefore we end up with the estimate
\begin{equation}
\|R_3\|_{L^5([0,T];L^{10}(\Omega))}\leq C\int_0^T\|h(s)\|ds,
\end{equation}
that completes the proof.
\end{proof}
Due to dissipation estimate \eqref{2.0} we are able to improve Corollary \ref{cor nonhomstri} as follows
\begin{cor}
\label{cor nhstri.lin [t-1,t]}
Let assumptions of Proposition \ref{prop StrLin} hold and $u$ be an energy solution of \eqref{eq ldwv}. Then $u$ satisfies the following estimate
\begin{equation}
\|u\|_{L^5([\max\{0,t-1\},t];L^{10}(\Omega))}\leq C\(e^{-\beta t}\|\xi_0\|_\Cal{E}+\int_0^te^{-\beta(t-s)}\|h(s)\|ds\),\ t\geq 0,
\end{equation}
for some $\beta,C>0$ which are independent of $t$ and $\xi_0$. 
\end{cor}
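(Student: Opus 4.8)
The plan is to localise the already-established \emph{local} bound of Corollary \ref{cor nonhomstri} to the moving unit window $[\tau(t),t]$, where $\tau(t):=\max\{0,t-1\}$, and then to feed the \emph{global} dissipative estimate \eqref{2.0} into the result. The crucial observation is that equation \eqref{eq ldwv} is autonomous: its solution on $[\tau(t),t]$ coincides with the solution issued at the instant $\tau(t)$ from the data $\xi_u(\tau(t))$ and driven by $h$ restricted to $[\tau(t),t]$. Since the window length satisfies $t-\tau(t)\le 1\le 2\pi$, a time translation lets me invoke Corollary \ref{cor nonhomstri} to obtain
\begin{equation}
\|u\|_{L^5([\tau(t),t];L^{10}(\Omega))}\le C\(\|\xi_u(\tau(t))\|_\Cal{E}+\int_{\tau(t)}^t\|h(s)\|ds\),
\end{equation}
with $C$ independent of $t$ and $\xi_0$.

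Next I would bound the two terms on the right using only elementary manipulations of the exponential weight. For the forcing integral, on $[\tau(t),t]$ one has $t-s\le 1$, hence $1\le e^\beta e^{-\beta(t-s)}$ and
\begin{equation}
\int_{\tau(t)}^t\|h(s)\|ds\le e^\beta\int_0^te^{-\beta(t-s)}\|h(s)\|ds.
\end{equation}
For the energy term I would apply the dissipation estimate \eqref{2.0} at time $\tau(t)$, which controls $\|\xi_u(\tau(t))\|_\Cal{E}$ by $e^{-\beta\tau(t)}\|\xi_0\|_\Cal{E}+\int_0^{\tau(t)}e^{-\beta(\tau(t)-s)}\|h(s)\|ds$ with the same $\beta>0$. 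The elementary bound $\tau(t)\ge t-1$ then yields both $e^{-\beta\tau(t)}\le e^\beta e^{-\beta t}$ and $e^{-\beta(\tau(t)-s)}\le e^\beta e^{-\beta(t-s)}$, so that, after absorbing the harmless factor $e^\beta$ into the constant $C$, every contribution is dominated by the two prescribed quantities $e^{-\beta t}\|\xi_0\|_\Cal{E}$ and $\int_0^te^{-\beta(t-s)}\|h(s)\|ds$. Combining the three displays finishes the argument.

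I do not expect a genuine obstacle: the statement is nothing more than the gluing of the \emph{local} Strichartz bound of Corollary \ref{cor nonhomstri} with the \emph{global} exponential energy decay \eqref{2.0}. The only points requiring a little care are the appeal to autonomy (time-translation invariance) needed to apply Corollary \ref{cor nonhomstri} on a window not anchored at the origin, and the bookkeeping of the exponential weights, both of which are routine. The case $t\le 1$ is automatically subsumed, since then $\tau(t)=0$, $\xi_u(\tau(t))=\xi_0$, and the same weight estimates apply verbatim.
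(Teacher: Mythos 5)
Your proof is correct and follows essentially the same route as the paper: apply the short-time Strichartz bound on the unit window $[\tau(t),t]$, then control $\|\xi_u(\tau(t))\|_{\Cal E}$ via the dissipative estimate \eqref{2.0} and absorb the factor $e^{\beta}$ coming from $t-\tau(t)\le 1$ into the constant. The only cosmetic difference is that the paper invokes Proposition \ref{prop StrLin} on the window whereas you invoke Corollary \ref{cor nonhomstri} directly (using autonomy and that the window length is at most $1\le 2\pi$), which if anything is the cleaner citation.
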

\begin{proof}
Indeed, applying Proposition \ref{prop StrLin} on segment $[\tau(t),t]$ with $\tau(t)=\max\{0,t-1\}$ we obtain
\begin{equation}
\label{2.71}
\|u\|_{L^5([\tau(t),t];L^{10}(\Omega))}\leq C\(\|\xi_u(\tau(t))\|_\Cal{E}+\int_{\tau(t)}^t\|h(s)\|ds\).
\end{equation}
Due to Proposition \eqref{prop lin basic} we get
\begin{multline}
\|\xi_u(\tau(t))\|\leq C \(e^{-\tau(t)}\|\xi_0\|_\Cal{E}+\int_0^{\tau(t)}e^{-\beta(\tau(t)-s)}\|h(s)\|ds\)\leq \\
Ce^\beta\(e^{-\beta t}\|\xi_0\|_\Cal{E}+\int_0^te^{-\beta(t-s)}\|h(s)\|ds\).
\end{multline}
Also the second term of \eqref{2.71} can be estimated as follows
\begin{multline}
\int_{\tau(t)}^t\|h(s)\|ds\leq \int_{\tau(t)}^te^{\beta(t-s)}e^{-\beta(t-s)}\|h(s)\|ds\leq e^\beta\int_{\tau(t)}^te^{-\beta(t-s)}\|h(s)\|ds\leq\\
e^\beta\int_0^te^{-\beta(t-s)}\|h(s)\|ds.
\end{multline}
Collecting together the above estimates we complete the proof.
\end{proof}
Furthermore, due to dissipation ( see Proposition \ref{prop lin basic}) and Corollary \ref{cor nonhomstri} we obtain corresponding space-time estimate on {\it arbitrary} segment $[0,T]$ which is uniform with respect to $T$
\begin{prop}
\label{prop StrLin}
Let the assumptions of Corollary \ref{cor nonhomstri} hold, $h(t)\in L^1([0,T];L^2(\Omega))$ and $u$ be a weak solution of \eqref{eq ldwv}. Then $u$ satisfies the estimate
\begin{equation}
\label{est StrLin}
\|u\|_{L^5([0,T];L^{10}(\Omega))}\leq C (\|u_0\|_{H^1(\Omega)}+\|u_1\|+\|h(t)\|_{L^1([0,T];L^2(\Omega))}),\ T>0,
\end{equation}  
where constant $C$ is independent of $T$ and initial data and depends on $\gamma$ and $\alpha$ only.
\end{prop}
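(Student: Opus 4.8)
The plan is to upgrade the local-in-time bound of Corollary~\ref{cor nonhomstri}, which holds on time intervals of length at most $2\pi$ with a constant independent of the interval, to the global estimate \eqref{est StrLin}, by summing over a decomposition of $[0,T]$ into unit intervals; the summation is rendered convergent (hence $T$-independent) by the dissipative decay of Proposition~\ref{prop lin basic}. Fix $T>0$, put $N=\lceil T\rceil$ and $I_k=[k,k+1]$, and extend $h$ by zero for $s>T$ so that $\|h\|_{L^1([0,\infty);L^2(\Omega))}=\|h\|_{L^1([0,T];L^2(\Omega))}$; then $[0,T]\subset\bigcup_{k=0}^{N-1}I_k$. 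Since the fifth power of the $L^5_t$-norm is additive over disjoint time intervals,
\begin{equation}
\|u\|_{L^5([0,T];L^{10}(\Omega))}^5\le\sum_{k=0}^{N-1}\|u\|_{L^5(I_k;L^{10}(\Omega))}^5=:\sum_{k=0}^{N-1}a_k^5 .
\end{equation}
Each $a_k$ is precisely what Corollary~\ref{cor nhstri.lin [t-1,t]} controls, that being Corollary~\ref{cor nonhomstri} localized on $I_k$ with initial datum $\xi_u(k)$ combined with the energy dissipation \eqref{2.0} applied to $\|\xi_u(k)\|_\Cal{E}$; evaluating at $t=k+1$ gives
\begin{equation}
a_k\le C\(e^{-\beta(k+1)}\|\xi_0\|_\Cal{E}+\int_0^{k+1}e^{-\beta(k+1-s)}\|h(s)\|\,ds\),
\end{equation}
with $C,\beta>0$ depending only on $\gamma$ and $\alpha$.

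The first key step is to pass from the $\ell^5$-sum to an $\ell^1$-sum over $k$. Since $\|\,\cdot\,\|_{\ell^5}\le\|\,\cdot\,\|_{\ell^1}$ for sequences, we obtain
\begin{equation}
\|u\|_{L^5([0,T];L^{10}(\Omega))}=\Big(\sum_{k=0}^{N-1}a_k^5\Big)^{1/5}\le\sum_{k=0}^{\infty}a_k .
\end{equation}
This is exactly the manoeuvre that erases the dependence on $T$: instead of estimating the $\ell^5$-norm of the $(a_k)$ directly, we majorize it by the full series, whose convergence is guaranteed by the damping.

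It then remains to sum the two contributions. The initial-data part is a geometric series, $\sum_{k\ge0}e^{-\beta(k+1)}=\tfrac{e^{-\beta}}{1-e^{-\beta}}$, and produces a term $C\|\xi_0\|_\Cal{E}$ with $C=C(\gamma,\alpha)$. For the forcing part I would apply Tonelli's theorem to interchange the sum and the integral and, for each fixed $s$, bound the geometric tail over the indices $k$ with $k+1\ge s$:
\begin{equation}
\sum_{k:\,k+1\ge s}e^{-\beta(k+1-s)}\le\frac{1}{1-e^{-\beta}},
\end{equation}
uniformly in $s$, because the least value of $k+1-s$ lies in $[0,1)$. Hence $\sum_{k\ge0}\int_0^{k+1}e^{-\beta(k+1-s)}\|h(s)\|\,ds\le(1-e^{-\beta})^{-1}\int_0^T\|h(s)\|\,ds$, i.e.\ a term $C\|h\|_{L^1([0,T];L^2(\Omega))}$. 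Using $\|\xi_0\|_\Cal{E}\simeq\|u_0\|_{H^1(\Omega)}+\|u_1\|$ then yields \eqref{est StrLin} with a constant depending only on $\gamma$ and $\alpha$ and independent of $T$.

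I expect the only genuinely delicate point to be the \emph{source} of the $T$-independence, rather than any individual estimate. The local Strichartz bound of Corollary~\ref{cor nonhomstri} alone controls time-slabs of length at most $2\pi$ with a uniform constant, so a naive concatenation over the $\sim T$ slabs covering $[0,T]$ would give a constant degenerating as $T\to\infty$. It is precisely the exponential-in-time dissipation coming from the fractional damping (Proposition~\ref{prop lin basic}, folded into Corollary~\ref{cor nhstri.lin [t-1,t]}) that turns the concatenation into a convergent geometric series and makes the Duhamel kernel $s\mapsto\sum_k e^{-\beta(k+1-s)}$ uniformly integrable; granting this, the remainder is bookkeeping.
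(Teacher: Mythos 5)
Your proposal is correct and follows essentially the same route as the paper: decompose $[0,T]$ into bounded subintervals (the paper uses length $2\pi$, you use length $1$), apply the local Strichartz estimate with initial data $\xi_u$ at the left endpoint of each subinterval, feed in the exponential energy decay of Proposition \ref{prop lin basic}, and sum the resulting geometric series, swapping sum and integral for the Duhamel term. Your explicit reduction of the $\ell^5$-sum to an $\ell^1$-sum and your care in noting that the per-slab bound rests only on Corollary \ref{cor nonhomstri} (not on the proposition being proved) match the paper's argument in substance.
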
  
\begin{proof}
Indeed, denoting $N:=\left[\frac{T}{2\pi}\right]$, we deduce 
\begin{multline}
\label{2.8}
\|u\|_{L^5([0,T];L^{10}(\Omega))}\leq \sum_{k=0}^{N-1}\|u\|_{L^5([2\pi k, 2\pi (k+1)];L^{10}(\Omega))}+\|u\|_{L^5([2\pi N, T];L^{10}(\Omega))}\leq\\
 C\(\sum_{k=0}^{N-1}\(\|\xi_u(2\pi k)\|_\Cal{E}+\|h(t)\|_{L^1([2\pi k;2\pi(k+1)];L^2(\Omega))}\)+\|\xi_u(2\pi N)\|_\Cal{E}+\|h(t)\|_{L^1([2\pi N,T];L^2(\Omega))}\)\leq\\
C\(\sum_{k=0}^N\(\|\xi_0\|_\Cal{E}e^{-\beta 2\pi k}+\int_0^{2\pi k}e^{-\beta(2\pi k -s)}\|h(s)\|ds\)+\|h\|_{L^1([0,T];L^2(\Omega))}\). 
\end{multline}
Obviously
\begin{equation}
\label{2.9}
\sum_{k=0}^N\|\xi_0\|_\Cal{E}e^{-\beta 2\pi k}\leq \frac{1}{1-e^{-2\pi \beta}}\|\xi_0\|_\Cal{E}.
\end{equation}
Also we have
\begin{multline}
\label{2.10}
\sum_{k=0}^N\int_0^{2\pi k}e^{-\beta(2\pi k -s)}\|h(s)\|ds=\sum_{k=1}^N\sum_{m=0}^{k-1}\int_{2\pi m}^{2\pi (m+1)}e^{-\beta(2\pi k -s)}\|h(s)\|ds\leq\\
 \sum_{k=1}^N\sum_{m=0}^{k-1}e^{-2\pi\beta(k-m-1)}\int_{2\pi m}^{2\pi (m+1)}\|h(s)\|ds=\sum_{m=0}^{N-1}\sum_{k=m+1}^Ne^{-2\pi\beta(k-m-1)}\int_{2\pi m}^{2\pi (m+1)}\|h(s)\|ds\leq\\
\sum_{m=0}^{N-1}\int_{2\pi m}^{2\pi (m+1)}\|h(s)\|ds\sum_{k=m+1}^\infty e^{-2\pi\beta(k-m-1)}\leq \frac{1}{1-e^{-2\pi \beta}}\|h\|_{L^1([0,T];L^2(\Omega))}.  
\end{multline}
Combining \eqref{2.8}-\eqref{2.10} we finish the proof.
\end{proof}

\section{Shatah-Struwe global solutions for semi-linear damped wave equation}
This section is devoted to global well-posedness of the following 
non-linear problem
\begin{equation}
\label{eq main}
\begin{cases}
\Dt^2u+\gamma(-\Dx)^\alpha\Dt u-\Delta u + f(u)=g(x), \quad x\in \Omega\\
u|_{\partial{\Omega}}=0, \quad u(0)=u_0\in H^1_0(\Omega) ,\quad \Dt u(0)=u_1\in L^2(\Omega),
\end{cases}
\end{equation}
where as before $\Omega$ is a bounded smooth domain, $\gamma>0$, $g(x)\in L^2(\Omega)$ and non-linearity $f\in C^1(\mathbf{R})$ of sub-critical growth satisfying natural dissipative assumptions
\begin{align}
\label{f ass1}
&|f'(s)|\leq C(1+|s|^q), \quad q\in [0,4);\\
\label{f ass2}
&f(s)s\geq -M;
\end{align} 
We recall that multiplying formally \eqref{eq main} by $\Dt u$ one finds natural to define weak {\it energy} solution of problem \eqref{eq main} on segment $[0,T]$ as function $u$ of the following regularity
\begin{equation}
\label{1}
\xi_u(t)\in L^\infty([0,T];\Cal{E}),\ \Dt u\in L^2([0,T];H^\alpha(\Omega))
\end{equation}
which satisfies equation \eqref{eq main} in the sense of distributions, that is
\begin{multline}
-\int_0^T(\Dt u,\Dt \phi)dt+\int_0^T(\nabla u,\nabla \phi)dt+\gamma\int_0^T(\Dt u,(-\Dx)^\alpha\Dt \phi)dt+\\
\int_0^T(f(u),\phi)dt=\int_0^T(g,\phi)dt,\ \forall\phi\in C^\infty_0((0,T)\times\Omega),
\end{multline}
and $\xi_u|_{t=0}=(u_0,u_1)$. Also, weak energy solution which in addition belongs to $L^5([0,T];L^{10}(\Omega))$ to be called as Shatah-Struwe solution of problem \eqref{eq main} on segment $[0,T]$.

Existence of weak energy solutions of problem \eqref{eq main} is known for a long time, see \cite{CV}. Based on Corollary \ref{cor nonhomstri} we are now proving existence of {\it Shatah-Struwe} solutions on arbitrary segment $[0,T]$.


\begin{theorem}
\label{th StrEx}
Let $\gamma>0$, $\alpha\in(0,\frac{1}{2})$, $g\in L^2(\Omega)$ and non-linearity $f$ satisfies \eqref{f ass1},\eqref{f ass2}. Then for every $\xi_0\in\Cal{E}$ there exists Shatah-Struwe solution of equation \eqref{eq main} with initial data $\xi_0$ on arbitrary segment $[0,T]$ and the following estimate holds:
\begin{align}
\label{est dissip}
\|\xi_u(t)\|_\Cal{E}+\|\Dt u\|_{L^2([\max\{0,t-1\},t];H^\alpha(\Omega))}\leq Q(\|\xi_0\|_\Cal{E})e^{-\beta t}+ Q(\|g\|),\ t\geq 0,\\
\label{est L5L10}
\|u\|_{L^5([\max\{0,t-1\},t];L^{10}(\Omega))}\leq Q(\|\xi_0\|_{\Cal E })e^{-\beta t}+Q(\|g\|),\ t\geq 0,
\end{align}
for some constant $\beta>0$ and a monotone increasing function $Q$ which are independent of $t$.
\end{theorem}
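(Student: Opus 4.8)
The plan is to treat the nonlinear problem \eqref{eq main} as the linear equation \eqref{eq ldwv} with external force $h=g-f(u)$ and to import the linear theory of Section 2. Since the existence of weak energy solutions with the regularity \eqref{1} is already available from \cite{CV}, the two things to prove are the uniform dissipative bound \eqref{est dissip} and the promotion of such a solution to the class $L^5([0,T];L^{10}(\Omega))$ with the estimate \eqref{est L5L10}.

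First I would establish \eqref{est dissip}. Multiplying \eqref{eq main} by $\Dt u+ru$ with $r>0$ small and working with the Lyapunov functional
\[
\Cal{L}(t)=\tfrac12\|\Dt u\|^2+\tfrac12\|\Nx u\|^2+(F(u),1)-(g,u)+r(\Dt u,u)+\tfrac{r\gamma}{2}\|(-\Dx)^{\alpha/2}u\|^2,\qquad F'=f,
\]
one obtains a differential inequality $\tfrac{d}{dt}\Cal{L}+\beta\Cal{L}\le C$ exactly as in Proposition \ref{prop lin basic}; the integrated damping term $\gamma\|(-\Dx)^{\alpha/2}\Dt u\|^2$ produces the $L^2([\max\{0,t-1\},t];H^\alpha(\Omega))$ norm of $\Dt u$. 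The only genuinely nonlinear point is to control $(F(u),1)$ and $(f(u),u)$, which is done from below by the dissipativity assumption \eqref{f ass2} and from above by $(F(u),1)\le C(1+\|u\|_{H^1}^{q+2})$ through the Sobolev embedding $H^1\subset L^6$, valid since $q+2<6$. Gronwall's lemma then yields \eqref{est dissip} with a nonlinear monotone $Q$ reflecting the potential energy, and in particular confines the energy norm to a uniform absorbing ball.

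The crux is \eqref{est L5L10}, and the obstacle is that the nonlinear force need not lie in $L^1([0,T];L^2(\Omega))$ on the strength of the energy bound alone. From \eqref{f ass1} one has $|f(u)|\le C(1+|u|^{q+1})$, hence $\|f(u(t))\|\le C(1+\|u(t)\|_{L^{2(q+1)}}^{q+1})$; since $q<4$ gives $2(q+1)<10$, I would interpolate $L^{2(q+1)}$ between the energy integrability $L^6$ and the Strichartz integrability $L^{10}$, namely $\|u\|_{L^{2(q+1)}}\le\|u\|_{L^6}^{\theta}\|u\|_{L^{10}}^{1-\theta}$ with $\theta=\tfrac{3(4-q)}{2(q+1)}$, so that $(1-\theta)(q+1)=\tfrac{5(q-2)}{2}<5$. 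Integrating in time and using H\"older on an interval of length at most one then gives
\[
\|f(u)\|_{L^1([\max\{0,t-1\},t];L^2(\Omega))}\le C+C\Big(\sup_{[\max\{0,t-1\},t]}\|u\|_{H^1}\Big)^{\theta(q+1)}\|u\|_{L^5([\max\{0,t-1\},t];L^{10}(\Omega))}^{(1-\theta)(q+1)},
\]
where the power of the $L^5L^{10}$ norm is \emph{strictly below} $5$ (and on a short interval one moreover extracts a positive power of its length). Feeding $h=g-f(u)$ into Corollary \ref{cor nhstri.lin [t-1,t]} and using the uniform energy bound \eqref{est dissip} then closes the estimate and produces \eqref{est L5L10}.

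To make this rigorous and to construct the solution I would first build it locally by contraction. On a short interval $[0,\tau]$ define $\Phi$ by sending $v\in L^5([0,\tau];L^{10}(\Omega))$ to the solution of \eqref{eq ldwv} with force $g-f(v)$ and data $\xi_0$; by Proposition \ref{prop StrLin} and the linear energy estimate, together with the subcritical bound above, $\Phi$ maps a ball of $C([0,\tau];\Cal{E})\cap L^5([0,\tau];L^{10}(\Omega))$ into itself, and the elementary difference bound $|f(v_1)-f(v_2)|\le C(1+|v_1|^q+|v_2|^q)|v_1-v_2|$ combined with the same interpolation makes $\Phi$ a contraction once $\tau$ is small, the subcriticality again furnishing a factor $\tau^{\delta}$ with $\delta>0$. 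The existence time $\tau$ depends only on the energy norm of the data; since \eqref{est dissip} keeps this norm in a uniform absorbing ball, $\tau$ can be chosen uniform in $t$, and the local pieces concatenate to a global Shatah--Struwe solution on any $[0,T]$, which satisfies \eqref{est L5L10} by the previous step. The \textbf{main obstacle} is precisely the circularity in the third step --- the $L^5L^{10}$ control is both assumed (to make $f(u)$ an admissible force) and proved --- and its resolution rests entirely on the strict subcriticality $q<4$, which forces the exponent $(1-\theta)(q+1)$ below $5$ and supplies the positive power of the interval length needed to break the loop; at the critical value $q=4$ this power degenerates and, as noted in the introduction, only local existence survives.
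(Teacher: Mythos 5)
Your proposal is essentially correct, but it reaches the result by a genuinely different construction than the paper. The paper works entirely through Galerkin approximations $u_N$, splits each approximation as $u_N=v_N+w_N$ with $v_N$ solving the homogeneous damped equation (so it carries the initial data and decays by Corollary \ref{cor nhstri.lin [t-1,t]}) and $w_N$ solving the inhomogeneous one with \emph{zero} data and force $-P_Nf(v_N+w_N)+P_Ng$; the closing of the superlinear inequality $\|w_N\|_{L^5L^{10}}\le A+CT^{\frac{4-q}{5}}\|w_N\|^{q+1}_{L^5L^{10}}$ is then done by the continuity argument of Lemma \ref{lem y<y^sigma + eps}, which crucially uses that the $L^5L^{10}$ norm of $w_N$ on $[0,s]$ vanishes at $s=0$ --- this is exactly what the zero-data splitting buys. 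You instead resolve the same circularity by a Banach fixed point for $\Phi:v\mapsto$ (solution of \eqref{eq ldwv} with force $g-f(v)$) on a short interval whose length depends only on $\|\xi_0\|_{\Cal E}$, followed by continuation via the dissipative bound; the self-mapping of the ball plays the role of the continuity lemma, and your interpolation of $L^{2(q+1)}$ between $L^6$ and $L^{10}$ (exponent $\tfrac{5(q-2)}{2}<5$ on the Strichartz norm) is a legitimate alternative to the paper's cruder embedding $L^{10}\subset L^{2q+2}$ (exponent $q+1<5$); both extract the positive power of the interval length that encodes subcriticality. Two points to tighten. First, the order of operations: for $q>2$ the product $(f(u),\Dt u)$ is \emph{not} defined for a bare energy solution of \cite{CV}, so \eqref{est dissip} cannot be derived first for such a solution by multiplying by $\Dt u+ru$; you must derive it for the locally constructed Shatah--Struwe solution (where $f(u)\in L^1L^2$ justifies the energy identity, cf.\ Corollary \ref{cor Eid}) and feed it back into the continuation argument --- the paper sidesteps this entirely by proving the energy estimate at the Galerkin level, where every multiplication is licit. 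Second, your interpolation exponent $\theta=\tfrac{3(4-q)}{2(q+1)}$ exceeds $1$ when $q<2$, so for that range one should simply use $L^6$ directly, as the paper does in its separate treatment of $q\in[0,2]$. Neither point is a genuine gap, only a matter of arranging the steps carefully.
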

\begin{proof}
We are going to construct the solution by Galerkin method. So let $\{e_j\}_{j=1}^\infty$ be complete orthonormal system of eigenfunctions of Laplacian and let $\Cal P_N$ be orthoprojector on the first $N$ eigenfunctions $e_j$. And let $\xi_0^N:=\Cal P_N\xi_0$ be initial data to approximate solution $u_N$, that is
\begin{equation}
\label{eq uN}
\Dt^2u_N+\gamma(-\Dx)^\alpha\Dt u_N-\Dx u_N+\Cal P_N f(u_N)=\Cal P_Ng,\ \xi_{u_N}(0)=\xi^N_0,
\end{equation}
where $u_N(t)=\sum_{j=1}^Nc_j(t)e_j$ for some unknown functions $c_j(t)$.

Multiplying \eqref{eq uN} by $\Dt u_N+\eb u_N$ with small enough $\eb$ one obtains well-known energy estimate ( see \cite{bk BV,CV,KZwvEq2009} for details)
\begin{equation}
\label{3.3}
\|\xi_{u_N}(t)\|_\Cal{E}+\|\Dt u_N\|_{L^2([\max\{0,t-1\},t];H^\alpha(\Omega))}\leq Q(\|\xi_0\|_\Cal{E})e^{-\beta t}+ Q(\|g\|),\ t\geq 0,
\end{equation}
for some $\beta>0$ and monotone increasing function $Q$. In particular, since for finite dimensional space all norms are equivalent, this means that $\xi_{u_N}(t)$ is defined globally.

Our next goal is to establish uniform $L^{5}([0,T];L^{10}(\Omega))$ estimate for $u_N(t)$. To this end it is convenient to represent $u_N$ in the form $u_N=v_N+w_N$, where $v_N$ solves
\begin{equation}
\label{eq vN}
\Dt^2 v_N+\gamma(-\Dx)^\alpha\Dt v_N-\Dx v_N=0,\ \xi_{v_N}(0)=\xi^N_0,
\end{equation}
and $w_N$ is determined by 
\begin{equation}
\label{eq wN}
\Dt^2 w_N+\gamma(-\Dx)^\alpha\Dt w_N-\Dx w_N=-P_Nf(v_N+w_N)+P_Ng,\ \xi_{w_N}(0)=0.
\end{equation}
Applying Corollary \ref{cor nhstri.lin [t-1,t]} to \eqref{eq vN} we get
\begin{equation}
\label{3.1}
\|v_N\|_{L^5([0,T];L^{10}(\Omega))}\leq Ce^{-\beta t}\|\xi^N_0\|_\Cal{E}\leq Ce^{-\beta t}\|\xi_0\|_{\Cal{E}},
\end{equation}
on arbitrary segment $[0,T]$. 

Also applying Corollary \ref{cor nonhomstri} for \eqref{eq wN}, together with \eqref{f ass1},\eqref{f ass2}, we have
\begin{multline}
\label{3.2}
\|w_N\|_{L^5([0,T];L^{10}(\Omega))}\leq T\|g\|+\|f(v_N+w_N)\|_{L^1([0,T];L^2(\Omega))}\leq T\|g\|+\\C\int_0^T\|1+|v_N+w_N|^{q+1}\|_{L^2(\Omega)}\leq T(\|g\|+C)+C\int_0^T\|v_N+w_N\|_{L^{2q+2}(\Omega)}^{q+1}dt.
\end{multline}

Thus if $q\in[0,2]$, then \eqref{3.2},\eqref{3.3} implies
\begin{equation}
\|w_N\|_{L^5([0,T];L^{10}(\Omega))}\leq T \(Q_1(\|\xi_0\|_\Cal{E})+Q_1(\|g\|)\),
\end{equation}
for some monotone increasing function $Q_1$ independent of $t$ and $\xi_0$. This together with \eqref{3.1} gives
\begin{equation}
\|u_N\|_{L^5([0,T];L^{10}(\Omega))}\leq (T+1) \(Q_1(\|\xi_0\|_\Cal{E})+Q_1(\|g\|)\),\quad q\in[0,2],\quad T\geq 0,
\end{equation}
for a monotone increasing function $Q_1$.

In case $q\in(2,4)$ we use continuous embedding $L^{10}(\Omega)\subset L^{2q+2}(\Omega)$, to continue \eqref{3.2} as follows
\begin{multline}
\|w_N\|_{L^5([0,T];L^{10}(\Omega))}\leq
 T(\|g\|+C)+C\int_0^T\|v_N\|^{q+1}_{L^{10}(\Omega)}+\|w_N\|^{q+1}_{L^{10}(\Omega)}dt\leq \\
T(\|g\|+C)+CT^\frac{4-q}{5}\|v_N\|^{q+1}_{L^5([0,T];L^{10}(\Omega))}+CT^\frac{4-q}{5}\|w_N\|^{q+1}_{L^5([0,T];L^{10}(\Omega))}\leq \\
\Big(T(\|g\|+C)+CT^\frac{4-q}{5}\|\xi_0\|^{q+1}_\Cal{E}\Big)+CT^\frac{4-q}{5}\|w_N\|^{q+1}_{L^5([0,T];L^{10}(\Omega))}\leq\\
CT^\frac{4-q}{5}(\|\xi_0\|^{q+1}_{\Cal{E}}+\|g\|+1)+C\|w_N\|^{q+1}_{L^5([0,T];L^{10}(\Omega))},
\end{multline}
where we suppose $T\leq 1$ in the last inequality. The last estimate with Young's inequality implies
\begin{equation}
\|w_N\|_{L^5([0,T];L^{10}(\Omega))}\leq T^\frac{4-q}{5}\(Q_1(\|\xi_0\|_{\Cal E})+Q_1(\|g\|)\)+C\|w_N\|^{q+1}_{L^5([0,T];L^{10}(\Omega))},\ T\leq 1,
\end{equation}
for some monotone increasing  $Q_1$ independent of $T$ and $\xi_0$. Since the problem is autonomous we also deduce the estimate on shifted segments of size $T$
\begin{multline}
\label{3.4}
\|w_N\|_{L^5([max\{0,t-T\},t];L^{10}(\Omega))}\leq T^\frac{4-q}{5}\(Q_1(\|\xi_{u_N}(\max\{0,t-T\})\|_{\Cal E})+Q_1(\|g\|)\)+\\
C\|w_N\|^{q+1}_{L^5([\max\{0,t-T\},t];L^{10}(\Omega))}\leq\\
T^\frac{4-q}{5}\(e^{-\beta t}e^\beta Q_1(\|\xi_0\|_\Cal{E})+e^\beta Q_1(\|g\|)\)+
C\|w_N\|^{q+1}_{L^5([\max\{0,t-T\},t];L^{10}(\Omega))},\ \quad t>0,\ T\in(0,1],
\end{multline}
where at the last step we used \eqref{3.3} and technical Lemma \ref{lem Q(Ae-t+B)<Q(A)e-t+Q(B)}. 

Moreover, taking into account that $q<4$, choosing $T=\delta_0=\frac{\eb}{(e^{-\beta t}e^\beta Q_1(\|\xi_0\|_{\Cal E})+e^\beta Q_1(\|g\|))^{5/(4-q)}}<1$ with $\eb$ small enough we are in position to apply technical Lemma \ref{lem y<y^sigma + eps} ( proven right after the theorem) to \eqref{3.4} that yields
\begin{equation}
\|w_N\|_{L^5([\max\{0,t-\delta_0\},t];L^{10}(\Omega))}\leq 2\delta_0^\frac{4-q}{5}e^\beta\(e^{-\beta t} Q_1(\|\xi_0\|_{\Cal E})+Q_1(\|g\|)\),\ \ \delta_0<1,\ t>0,
\end{equation}
for some monotone increasing function $Q_1$.

Denoting for brevity $\tau(t)=\max\{0,t-1\}$, representing segment $[\tau(t),t]$ as union of segments of size $\delta_0$ and using the above inequality we derive
\begin{multline}
\label{3.5}
\|w_N\|_{L^5([\tau(t),t];L^{10}(\Omega))}\leq\\
 \sum_{i=0}^{\left[\frac{1}{\delta_0}\right]-1}\|w_N\|_{L^5([\tau(t+\delta_0 i),\tau(t+\delta_0(i+1))];L^{10}(\Omega))}+
\|w_N\|_{L^5([\tau(t+\delta_0\left[\frac{1}{\delta_0}\right]),t];L^{10}(\Omega))}\leq\\
 2e^\beta\sum_{i=0}^{\left[\frac{1}{\delta_0}\right]-1}\delta_0^\frac{4-q}{5}\(e^{-\beta \tau(t+\delta_0(i+1))}Q_1(\|\xi_0\|_{\Cal E})+Q_1(\|g\|)\)+2e^\beta\delta_0^\frac{4-q}{5}\(e^{-\beta t}Q_1(\|\xi_0\|_{\Cal E})+Q_1(\|g\|)\)\leq\\
 2e^{2\beta}\sum_{i=0}^{\left[\frac{1}{\delta_0}\right]}\delta_0^\frac{4-q}{5}\(e^{-\beta t}Q_1(\|\xi_0\|_{\Cal E})+Q_1(\|g\|)\)\leq 2e^{2\beta}\eb^\frac{4-q}{5}\(\frac{1}{\delta_0}+1\)\leq 4e^{2\beta}\eb^\frac{4-q}{5}\frac{1}{\delta_0}=\\
4e^{2\beta}\eb^{-\frac{1+q}{5}}(e^{-\beta t}Q_1(\|\xi_0\|_{\Cal E})+Q_1(\|g\|))^\frac{5}{4-q}\leq e^{-\beta t}Q(\|\xi_0\|_{\Cal E})+Q(\|g\|),
\end{multline}
where at the last step we used Lemma \ref{lem Q(Ae-t+B)<Q(A)e-t+Q(B)}. 

Thus combining \eqref{3.1} and \eqref{3.5} we deduce
\begin{equation}
\label{3.6}
\|u_N\|_{L^5([\tau(t),t];L^{10}(\Omega))}\leq e^{-\beta t}Q(\|\xi_0\|_{\Cal E})+Q(\|g\|),\ t\geq 0,
\end{equation}
for some monotone increasing function $Q$ which is independent of $t$ and $\xi_0$.

Finally with uniform estimates \eqref{3.3} and \eqref{3.6} in hands it is not difficult to check that corresponding limit $u$ of $u_N$ solves \eqref{eq main} and satisfies \eqref{est dissip}, \eqref{est L5L10}.
\end{proof}

\begin{lemma}
\label{lem y<y^sigma + eps}
Let $0<C_0<\infty$ and suppose that $0\leq y(s)\in C([a,b))$
satisfies $y(a)=0$ and
\begin{equation}
\label{est y<y^sigma+eps}
y(s)\leq C_0y(s)^\sigma+\eb,
\end{equation}
for some $\sigma>1$ and $0<\eb<\frac{1}{2}\(\frac{1}{2C_0}\)^\frac{1}{\sigma-1}$. Then
\begin{equation}
y(s)\leq 2\eb,\quad s\in [a,b).
\end{equation}
\end{lemma}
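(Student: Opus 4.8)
The plan is to read \eqref{est y<y^sigma+eps} as a pointwise algebraic constraint on the scalar $y(s)$ and to propagate the bound from the initial value $y(a)=0$ by a continuity (bootstrap) argument. For a fixed $s$, write $r=y(s)\geq0$; then \eqref{est y<y^sigma+eps} asserts $\Phi(r)\geq0$, where
\begin{equation*}
\Phi(r):=C_0 r^\sigma+\eb-r.
\end{equation*}
Since $\sigma>1$ the map $r\mapsto C_0 r^\sigma$ is convex, so $\Phi$ is convex on $[0,\infty)$, with $\Phi(0)=\eb>0$ and $\Phi(r)\to+\infty$ as $r\to\infty$. Hence the feasible set $\{r\geq0:\Phi(r)\geq0\}$ is either all of $[0,\infty)$ or splits as $[0,r_1]\cup[r_2,\infty)$ with $0<r_1\leq r_2$, the two components being separated by the open gap $(r_1,r_2)$ on which $\Phi<0$.

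The algebraic core is to show that $2\eb$ falls strictly inside this forbidden gap, whence $r_1<2\eb$. Indeed,
\begin{equation*}
\Phi(2\eb)=C_0(2\eb)^\sigma+\eb-2\eb=\eb\(C_0 2^\sigma\eb^{\sigma-1}-1\),
\end{equation*}
and the hypothesis $\eb<\frac12\(\frac{1}{2C_0}\)^{1/(\sigma-1)}$ is, after raising both positive sides to the power $\sigma-1$, exactly equivalent to $C_0 2^\sigma\eb^{\sigma-1}<1$, i.e. to $\Phi(2\eb)<0$. In particular the feasible set genuinely splits, $2\eb\in(r_1,r_2)$, and $r_1<2\eb$.

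It remains to rule out that $y(s)$ ever jumps to the upper component $[r_2,\infty)$. Here I would invoke the continuity of $y$ together with $y(a)=0\in[0,r_1]$. If the set $\{s\in[a,b):y(s)>2\eb\}$ were nonempty, let $s^\ast$ be its infimum; since $y(a)=0<2\eb$ and $y$ is continuous, the set $\{y>2\eb\}$ is open, so $s^\ast>a$ and $y(s^\ast)=2\eb$ (the value is attained as $y$ cannot skip it). But $2\eb$ lies in the gap where $\Phi<0$, contradicting $\Phi(y(s^\ast))\geq0$. Therefore $y(s)\leq2\eb$ for all $s\in[a,b)$; running the same argument with $2\eb$ replaced by any point of $(r_1,r_2)$ even yields the sharper $y(s)\leq r_1<2\eb$. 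The only point demanding care is the bookkeeping at $s^\ast$ — that the infimum of the bad set is a point where $y=2\eb$ — which follows from continuity and the openness of $\{y>2\eb\}$; everything else is the elementary convex-analysis description of the gap $(r_1,r_2)$ and the verification that the threshold on $\eb$ is precisely the condition $\Phi(2\eb)<0$.
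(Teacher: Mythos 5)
Your proof is correct and follows essentially the same continuity/bootstrap argument as the paper's, built on the sign of the same auxiliary function $\Phi(r)=C_0r^\sigma-r+\eb$ (the paper's $k_\eb$). The only difference is cosmetic: the paper checks $k_\eb<0$ at the point $(1/(2C_0))^{1/(\sigma-1)}$ and then performs one extra absorption step $C_0y^{\sigma}\leq\tfrac12 y$, whereas you verify $\Phi(2\eb)<0$ directly from the hypothesis on $\eb$, which traps $y$ below $2\eb$ in a single step and even yields the marginally sharper bound $y\leq r_1<2\eb$.
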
 
\begin{proof}
Let us consider function
\begin{equation}
k_\eb(x)=C_0x^\sigma-x+\eb=x(C_0x^{\sigma-1}-1)+\eb.
\end{equation}
We have $k_\eb(0)=\eb>0$ and $k_\eb\(\(\frac{1}{2C_0}\)^\frac{1}{\sigma-1}\)<0$ by assumptions of the lemma. 
On the other hand, by assumptions of the Lemma $h_\eb(s)=k_\eb(y(s))\geq 0$, when $s\in[a,b)$. Consequently,
since $y(a)=0$ and $y(s)\in C([a,b))$, we have $y(s)\leq \(\frac{1}{2C_0}\)^\frac{1}{\sigma-1}$ that together with 
\eqref{est y<y^sigma+eps} gives
\begin{equation}
y(s)\leq C_0y(s)\(y(s)\)^{\sigma-1}+\eb\leq\frac{1}{2}y(s)+\eb,\quad s\in[a,b),
\end{equation}
that yields the desired result.
\end{proof}

\begin{cor}
\label{cor Eid}
Let assumptions of Theorem \ref{th StrEx} be satisfied and $u$ be a Shatah-Struwe solution of equation \eqref{eq main}. Then $u$ satisfies the following energy identity
\begin{multline}
\frac{1}{2}\|\Dt u(t_2)\|^2+\frac{1}{2}\|\nabla u(t_2)\|^2+F(u(t_2))-(g,u(t_2))+\int_{t_1}^{t_2}\|(-\Dx)^\frac{\alpha}{2}\Dt u(s)\|^2ds=\\
\frac{1}{2}\|\Dt u(t_1)\|^2+\frac{1}{2}\|\nabla u(t_1)\|^2+F(u(t_1))-(g,u(t_1)),\quad \forall t_2\geq t_1\geq 0,
\end{multline}
where $F(s)=\int_0^sf(r)dr$.
\end{cor}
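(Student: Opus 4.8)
The plan is to absorb the nonlinearity into the forcing term, thereby reducing the statement to the energy identity for the linear damped equation \eqref{eq ldwv}, and then to justify the two nonlinear book-keeping identities (the chain rule for $F$ and the primitive of $(g,\Dt u)$) by mollification in time. First I would check that the nonlinear term is an admissible source. By \eqref{f ass1} one has $|f(u)|\le C(1+|u|^{q+1})$, hence $\|f(u(t))\|\le C(1+\|u(t)\|_{L^{2q+2}(\Omega)}^{q+1})$. Since $q<4$ gives $2q+2<10$, the embedding $L^{10}(\Omega)\subset L^{2q+2}(\Omega)$ together with Hölder in time (exponent $5/(q+1)>1$) yields
\[
\int_{t_1}^{t_2}\|f(u(t))\|\,dt\le C(t_2-t_1)+C(t_2-t_1)^{\frac{4-q}{5}}\|u\|_{L^5([t_1,t_2];L^{10}(\Omega))}^{q+1}<\infty,
\]
the last factor being finite for a Shatah-Struwe solution by \eqref{est L5L10}. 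Thus $u$ is an energy solution of the linear problem \eqref{eq ldwv} with right-hand side $h:=g-f(u)\in L^1([t_1,t_2];L^2(\Omega))$; in particular, by the linear theory of Section~2 (the Duhamel representation of Corollary \ref{cor nonhomstri} together with Proposition \ref{prop lin basic} and Corollary \ref{cor lin L2(H 1+a)}) we have $\xi_u\in C([t_1,t_2];\Cal E)$ and $\Dt u\in L^2([t_1,t_2];H^\alpha(\Omega))$.

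Next I would derive the energy balance for this linear equation. Formally, multiplying \eqref{eq ldwv} by $\Dt u$ gives
\[
\frac{d}{dt}\Big(\tfrac12\|\Dt u\|^2+\tfrac12\|\nabla u\|^2\Big)+\gamma\|(-\Dx)^{\frac{\alpha}{2}}\Dt u\|^2=(h,\Dt u).
\]
To make this rigorous I would mollify in time, $u^h:=\eta_h*_t u$, so that $u^h$ is smooth in $t$ and solves the mollified equation; the balance then holds exactly for $u^h$ (with $h$ replaced by $\eta_h*_t h$) and can be integrated over $[t_1,t_2]$ with no spurious time-boundary terms. Letting $h\to0$, the dissipation term converges since $\Dt u^h\to\Dt u$ in $L^2([t_1,t_2];H^\alpha(\Omega))$, the endpoint quadratic terms converge by the strong continuity $\xi_u\in C([t_1,t_2];\Cal E)$, and the source integral converges because $\eta_h*_t h\to h$ in $L^1([t_1,t_2];L^2(\Omega))$.

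Finally I would identify $\int_{t_1}^{t_2}(h,\Dt u)\,dt$. As $g$ is independent of time, $\int_{t_1}^{t_2}(g,\Dt u)\,dt=(g,u(t_2))-(g,u(t_1))$. For the nonlinear part the key is the chain-rule identity
\[
\int_{t_1}^{t_2}(f(u),\Dt u)\,dt=\int_\Omega\big(F(u(t_2))-F(u(t_1))\big)\,dx,
\]
which I would again obtain by time-mollification: $(f(u^h),\Dt u^h)$ integrates exactly to $\int_\Omega(F(u^h(t_2))-F(u^h(t_1)))\,dx$, and one passes to the limit using $u\in C([t_1,t_2];H^1_0(\Omega))\cap L^5([t_1,t_2];L^{10}(\Omega))$, $\Dt u\in L^\infty([t_1,t_2];L^2(\Omega))$ and the growth bound \eqref{f ass1}. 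Indeed $F(u)\sim|u|^{q+2}$ with $q+2<6$ is controlled at the endpoints by $u(t_i)\in H^1_0(\Omega)\subset L^6(\Omega)$, while $f(u)\Dt u$ is dominated in $L^1([t_1,t_2]\times\Omega)$ since $\int\|u\|_{L^{2q+2}(\Omega)}^{q+1}\|\Dt u\|\,dt$ is finite by the same Hölder argument as above; dominated convergence then closes the identity. Collecting the three steps gives the stated energy identity for all $t_2\ge t_1\ge0$.

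The hard part is upgrading the formal balance to an \emph{exact} identity rather than an inequality: this hinges on the strong continuity $\xi_u\in C([t_1,t_2];\Cal E)$ and on the legitimacy of the chain rule for $F(u)$ under merely subquintic growth. Both are made available precisely by the extra $L^5([0,T];L^{10}(\Omega))$ regularity of Shatah-Struwe solutions, which renders $f(u)\in L^1([t_1,t_2];L^2(\Omega))$ and supplies the uniform integrability needed to pass to the limit in the mollified identities; without it one could only guarantee an energy inequality.
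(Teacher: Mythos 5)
Your proof is correct and rests on the same key observation as the paper's: the $L^5([t_1,t_2];L^{10}(\Omega))$ bound makes $f(u)\in L^1([t_1,t_2];L^2(\Omega))$, so the pairing $(f(u),\Dt u)$ is integrable, the formal multiplication by $\Dt u$ can be justified, and the chain rule $\int_{t_1}^{t_2}(f(u),\Dt u)\,ds=F(u(t_2))-F(u(t_1))$ closes the identity. The difference is in the regularization device: the paper applies the spectral projector $\Cal P_N$ to the equation satisfied by $u$, multiplies by $\Dt(\Cal P_N u)$ and lets $N\to\infty$, whereas you mollify in time after recasting the equation as the linear problem \eqref{eq ldwv} with source $h=g-f(u)$. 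One point deserves care in your version: you invoke $\xi_u\in C([t_1,t_2];\Cal E)$ to pass to the limit in the endpoint quadratic terms, but in the paper this strong continuity is Corollary \ref{cor xi_u C(E)}, proved \emph{after} and \emph{by means of} the energy identity. Your argument is not circular, because you obtain the continuity independently from the mild-solution (Duhamel) representation of the linear problem with $L^1L^2$ source; but to make that legitimate you should state explicitly that energy solutions of \eqref{eq ldwv} are unique, so that the Shatah--Struwe solution coincides with the mild solution generated by the $C_0$-semigroup (the paper uses this identification implicitly in Corollary \ref{cor nonhomstri}). The paper's spectral truncation sidesteps the issue entirely: $\Cal P_N\xi_u(t_i)\to\xi_u(t_i)$ strongly in $\Cal E$ at each fixed time as soon as $\xi_u(t_i)$ is a well-defined element of $\Cal E$, for which weak continuity (valid for any energy solution) already suffices. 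Each route buys something: yours yields strong continuity of the trajectory as a by-product of the linear theory, while the paper's needs fewer prerequisites about the linear semigroup and derives strong continuity afterwards from the identity itself.
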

\begin{proof}
Indeed, since we already know that $u\in L^5([t_1,t_2];L^{10}(\Omega))$, hence $f(u)\in L^1([t_1,t_2];L^2(\Omega))$ (due to \eqref{f ass1}). Taking into account that $u\in L^\infty([t_1,t_2];L^2(\Omega))$ we conclude that product $\int_{t_1}^{t_2}(f(u(s)),\Dt u(s))ds$ makes sense. Thus we can apply projector $\Cal{P}_N$ to \eqref{eq main}, multiply the equation by $\Dt u_N$, integrate the obtained equality over $x\in\Omega$ and $t\in[t_1,t_2]$ and then pass to the limit as $N\to\infty $. Finally it remains to notice that $\int_{t_1}^{t_2}(f(u(s),\Dt u(s))ds=F(u(t_2))-F(u(t_1))$ (see \cite{TemamDS}).
\end{proof}
\begin{cor}
\label{cor xi_u C(E)}
Let assumptions of Theorem \ref{th StrEx} be satisfied and $u$ be a Shatah-Struwe solution of \eqref{eq main}. Then $\xi_u(t)\in C([0,T];\Cal{E})$ for any $T>0$.
\end{cor}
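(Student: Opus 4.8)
The plan is to upgrade weak continuity to strong continuity via the energy identity. Recall that in the Hilbert space $\Cal E$ weak convergence $\xi_u(t)\rightharpoonup\xi_u(t_0)$ together with convergence of norms $\|\xi_u(t)\|_\Cal E\to\|\xi_u(t_0)\|_\Cal E$ forces strong convergence $\xi_u(t)\to\xi_u(t_0)$ (the Radon--Riesz property of Hilbert spaces). Hence it suffices to establish two facts: (i) $\xi_u\in C_w([0,T];\Cal E)$, i.e. the trajectory is weakly continuous into the energy space; and (ii) the map $t\mapsto\|\xi_u(t)\|_\Cal E$ is continuous. Since the norm of $\Cal E$ is equivalent to $(\|\Nx u(t)\|^2+\|\Dt u(t)\|^2)^{1/2}$, claim (ii) amounts to continuity of $t\mapsto\|\Nx u(t)\|^2+\|\Dt u(t)\|^2$.

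For (i), first note that since $\Dt u\in L^\infty([0,T];L^2(\Omega))$, the primitive $u$ is Lipschitz, hence continuous, into $L^2(\Omega)$; combined with $u\in L^\infty([0,T];H^1_0(\Omega))$ and reflexivity this yields $u\in C_w([0,T];H^1_0(\Omega))$ by the standard Lions--Magenes argument. For the velocity I would read off from \eqref{eq main} that $\Dt^2 u=\Dx u-\gamma(-\Dx)^\alpha\Dt u-f(u)+g$, where each term lies in $L^1([0,T];H^{-1}(\Omega))$: indeed $\Dx u\in L^\infty([0,T];H^{-1})$, while $(-\Dx)^\alpha\Dt u\in L^2([0,T];H^{-\alpha})\subset L^2([0,T];H^{-1})$ because $\alpha<\tfrac12<1$, and $f(u)\in L^1([0,T];L^2)$ thanks to the Strichartz regularity $u\in L^5([0,T];L^{10}(\Omega))$ together with the growth bound \eqref{f ass1} (which give $\|f(u(t))\|\le C(1+\|u(t)\|_{L^{10}}^{q+1})$ with $q+1<5$). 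Thus $\Dt u\in C([0,T];H^{-1}(\Omega))$, which together with $\Dt u\in L^\infty([0,T];L^2(\Omega))$ gives $\Dt u\in C_w([0,T];L^2(\Omega))$; combining the two components proves (i).

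For (ii) I would use the energy identity of Corollary \ref{cor Eid}. Writing the total energy $E(t)=\tfrac12\|\Dt u(t)\|^2+\tfrac12\|\Nx u(t)\|^2+F(u(t))-(g,u(t))$, the identity gives $E(t)=E(0)-\int_0^t\|(-\Dx)^{\frac{\alpha}{2}}\Dt u(s)\|^2ds$, so $E$ is continuous in $t$ since the dissipation integrand is integrable. Consequently continuity of $t\mapsto\|\Nx u(t)\|^2+\|\Dt u(t)\|^2$ follows once the two lower-order terms are shown continuous. The term $(g,u(t))$ is continuous because $u\in C([0,T];L^2(\Omega))$. For the nonlinear potential $F(u(t))$ I would interpolate: $u\in C([0,T];L^2)\cap L^\infty([0,T];H^1)$ gives $u\in C([0,T];L^p(\Omega))$ for every $p<6$, via $\|u(t)-u(t_0)\|_{L^p}\le\|u(t)-u(t_0)\|^\theta\,\|u(t)-u(t_0)\|^{1-\theta}_{L^6}$ with the second factor uniformly bounded; as $q+2<6$ this applies with $p=q+2$, and the elementary bound $|F(s)-F(s')|\le C(1+|s|^{q+1}+|s'|^{q+1})|s-s'|$ then yields continuity of $t\mapsto F(u(t))$ by H\"older's inequality. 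This establishes (ii), and together with (i) completes the proof.

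The main obstacle I anticipate is part (i), the weak continuity of the velocity: one must check carefully that $\Dt^2 u$ lands in a \emph{single} negative-order space so that $\Dt u$ is genuinely continuous there, and it is precisely the fractional damping term $(-\Dx)^\alpha\Dt u$ that needs attention, staying in $H^{-1}$ only by virtue of $\alpha<1$. Everything else is routine once the subcritical growth $q<4$ is used to place $F(u)$ in a space where strong $L^{q+2}$-convergence of $u(t)$ is available.
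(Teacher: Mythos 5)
Your proposal is correct and follows essentially the same route as the paper: weak continuity of $\xi_u$ into $\Cal E$ obtained from $\Dt u\in L^\infty L^2$ and from reading $\Dt^2u$ off the equation, combined with continuity of $t\mapsto\|\xi_u(t)\|_{\Cal E}^2$ via the energy identity of Corollary \ref{cor Eid}, then upgraded to strong continuity by expanding the norm of the difference. The only (harmless) variation is in the sub-step showing continuity of $t\mapsto\int_\Omega F(u(t))\,dx$: the paper uses $\Dt F(u)=f(u)\Dt u\in L^1([0,T];L^1(\Omega))$ together with a lemma from \cite{TemamDS}, whereas you interpolate between $L^2$ and $L^6$ to get $u\in C([0,T];L^{q+2}(\Omega))$ with $q+2<6$ and use a pointwise bound on $F$; both are valid.
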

\begin{proof}
From, the fact that $\Dt u\in L^\infty([0,T];L^2(\Omega))$ and Newton's formula follows that $u\in C([0,T];L^2(\Omega))$, that together with $u\in L^\infty([0,T];H^1_0(\Omega))$ implies (see \cite{TemamDS}, Lemma 3.3) 
\begin{equation}
\label{u Cw H1}
u\in C^w([0,T];H^1_0(\Omega)).
\end{equation}
Also from \eqref{eq main} follows that $\Dt^2 u\in L^\infty([0,T];H^{-1}(\Omega))$ that, due to Newton's formula, implies $\Dt u\in C([0,T];H^{-1}(\Omega))$ that together with $\Dt u\in L^\infty([0,T];L^2(\Omega))$ gives ( see \cite{TemamDS}, Lemma 3.3)
\begin{equation}
\label{dtu Cw L2}
\Dt u\in C^w([0,T];L^2(\Omega)).
\end{equation}
Up to this moment we have not used the fact that $u$ possesses additional regularity \eqref{est L5L10} and it is valid for any energy solution. This in particular explains how we understand initial data for energy solutions.

To prove strong continuity we need to use {\it energy equality}  \eqref{cor Eid}. From the fact $F(u)\in L^\infty([0,T];L^1(\Omega))$ and its distributional derivative $\Dt F(u)=f(u)\Dt u\in L^1([0,T];L^2(\Omega))$, exactly here we need $u\in L^5([0,T];L^{10}(\Omega))$, that is $\Dt F(u)\in L^1([0,T];L^1(\Omega))$ we conclude that $F(u)\in C([0,T];L^1(\Omega))$ ( see \cite{TemamDS}, Lemma 3.1). Since now function $t\to (F(u(t)),1)$ is continuous, from energy equality and the fact that $u\in C([0,T];L^2(\Omega))$ we conclude that
\begin{equation}
\label{cont norm}
\mbox{function } t\to \|\xi_u(t)\|^2_\Cal{E} \mbox{ is continuous.}
\end{equation} This is what we need, indeed
\begin{multline}
\|\xi_u(t)-\xi_u(t_0)\|^2_\Cal{E}=\|\xi_u(t)\|^2_\Cal{E}+\|\xi_u(t_0)\|^2_\Cal{E}-2(\nabla u(t),\nabla u(t_0))-\\
2(\Dt u(t),\Dt u(t_0))\to 0,\quad \mbox{as }t\to t_0,
\end{multline}
due to \eqref{u Cw H1}, \eqref{dtu Cw L2} and \eqref{cont norm}.
\end{proof}
\begin{cor}
\label{cor u in L2(H 1+a)}
Let assumptions of Theorem \ref{th StrEx} be satisfied and $u$ be a Shatah-Struwe solution of problem \eqref{eq main}. Then, in addition, we have $u\in L^2([0,T];H^{1+\alpha}(\Omega))$, $\Dt^2 u\in L^2([0,T]; H^{\alpha-1}(\Omega))$ and the following estimates hold:
\begin{equation}
\label{est u in L2(H a+1)}
\|u(t)\|_{L^2([\max\{0,t-1\},t];H^{1+\alpha}(\Omega)}\leq Q(\|\xi_0\|_\Cal{E})e^{-\beta t}+Q(\|g\|),\ t\geq 0,
\end{equation}
\begin{equation}
\label{est dtt u in L2(H a-1)}
\|\Dt^2 u(t)\|_{L^2([\max\{0,t-1\},t]H^{\alpha-1}(\Omega))}\leq Q(\|\xi_0\|_\Cal{E})e^{-\beta t}+Q(\|g\|),\ t\geq 0, 
\end{equation} 
where constant $\beta>0$ and $Q$ is some monotone increasing function independent of $t$.
\end{cor}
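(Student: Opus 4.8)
The plan is to treat the semi-linear equation \eqref{eq main} as the linear problem \eqref{eq ldwv} with forcing $h:=g-f(u)$, and to convert the resulting bounds into dissipative form by feeding in the estimates \eqref{est dissip}, \eqref{est L5L10} already established in Theorem \ref{th StrEx}.

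First I would verify that $h\in L^1([0,T];L^2(\Omega))$ with a locally controlled norm. Integrating \eqref{f ass1} gives $|f(s)|\leq C(1+|s|^{q+1})$, so $\|f(u(s))\|\leq C(1+\|u(s)\|_{L^{2q+2}(\Omega)}^{q+1})$; since $q<4$ the embedding $L^{10}(\Omega)\subset L^{2q+2}(\Omega)$ holds, and Hölder in time on a window of length at most one yields
\[
\int_{\tau(t)}^t\|f(u(s))\|\,ds\leq C+C\,\|u\|^{q+1}_{L^5([\tau(t),t];L^{10}(\Omega))},\qquad \tau(t)=\max\{0,t-1\},
\]
which is finite and, by \eqref{est L5L10}, dominated by $Q(\|\xi_0\|_\Cal E)e^{-\beta t}+Q(\|g\|)$. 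Membership $u\in L^2([0,T];H^{1+\alpha}(\Omega))$ then follows immediately from Corollary \ref{cor lin L2(H 1+a)} applied with this $h$.

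For the quantitative bound \eqref{est u in L2(H a+1)} I would reproduce the multiplier computation behind Corollary \ref{cor lin L2(H 1+a)}, now retaining the nonlinear term. Multiplying \eqref{eq main} by $(-\Dx)^\alpha u$ (rigorously at the Galerkin level, exactly as in the proof of Theorem \ref{th StrEx}) and integrating over $[\tau(t),t]$ produces $\int_{\tau(t)}^t\|(-\Dx)^{\frac{1+\alpha}{2}}u\|^2\,ds$ on the left and, on the right, the boundary energies $\|\xi_u(\tau(t))\|_\Cal E^2+\|\xi_u(t)\|_\Cal E^2$, the term $\int_{\tau(t)}^t\|(-\Dx)^{\frac\alpha2}\Dt u\|^2\,ds$, and the forcing pairing $\int_{\tau(t)}^t(g-f(u),(-\Dx)^\alpha u)\,ds$. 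The last pairing is bounded by $\sup_{[\tau(t),t]}\|u\|_{H^1(\Omega)}\big(\|g\|+\int_{\tau(t)}^t\|f(u)\|\,ds\big)$, using $\|(-\Dx)^\alpha u\|\leq C\|u\|_{H^1(\Omega)}$ (valid since $2\alpha<1$). Every factor is now controlled by $Q(\|\xi_0\|_\Cal E)e^{-\beta t}+Q(\|g\|)$ via \eqref{est dissip} and the display above; collecting terms and taking square roots gives \eqref{est u in L2(H a+1)} after a harmless redefinition of $\beta$ and $Q$.

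Finally, for $\Dt^2u$ I would read off from the equation $\Dt^2u=\Dx u-\gamma(-\Dx)^\alpha\Dt u-f(u)+g$ and estimate each term in $L^2([\tau(t),t];H^{\alpha-1}(\Omega))$. Here $\|\Dx u\|_{H^{\alpha-1}}\leq C\|u\|_{H^{1+\alpha}}$ is controlled by the part just proved, $\|(-\Dx)^\alpha\Dt u\|_{H^{\alpha-1}}\leq C\|\Dt u\|_{H^\alpha}$ (since $-\alpha\geq\alpha-1$ for $\alpha\leq\tfrac12$) is controlled by \eqref{est dissip}, and $\|g\|_{H^{\alpha-1}}\leq\|g\|$. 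The one genuinely delicate point, and the main obstacle, is that $f(u)$ need not lie in $L^2([0,T];L^2(\Omega))$ when $q$ is close to $4$; this is precisely why the negative-order target $H^{\alpha-1}$ is used. I would estimate $\|f(u)\|_{H^{\alpha-1}}\leq C\|f(u)\|_{L^{6/(5-2\alpha)}(\Omega)}$ through the dual Sobolev embedding $L^{6/(5-2\alpha)}(\Omega)\subset H^{\alpha-1}(\Omega)$ in $\R^3$, and then bound $\|u\|_{L^m}$ with $m=\tfrac{6(q+1)}{5-2\alpha}<10$ either by the energy embedding $L^6(\Omega)\supset L^m(\Omega)$ (when $m\leq6$) or by interpolating between the energy bound in $L^6(\Omega)$ and the Strichartz bound in $L^{10}(\Omega)$; the borderline computation shows the resulting time exponent on $\|u\|_{L^{10}}$ equals $5(q-4+2\alpha)<5$ exactly because $q<4$ and $\alpha<\tfrac12$, so $\|f(u)\|_{L^2([\tau(t),t];H^{\alpha-1}(\Omega))}$ is finite and dominated by \eqref{est L5L10} and \eqref{est dissip}. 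Summing the four contributions yields \eqref{est dtt u in L2(H a-1)}.
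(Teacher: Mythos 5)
Your proposal is correct and follows essentially the same route as the paper: treat \eqref{eq main} as the linear problem \eqref{eq ldwv} with forcing $g-f(u)\in L^1(L^2)$ (which is exactly where $u\in L^5(L^{10})$ and $q<4$ enter), invoke Corollary \ref{cor lin L2(H 1+a)} together with the dissipative bounds \eqref{est dissip}, \eqref{est L5L10} to get \eqref{est u in L2(H a+1)}, and then read $\Dt^2u$ off the equation term by term in $H^{\alpha-1}(\Omega)$. The only point of divergence is the bound on $\|f(u)\|_{L^2([\max\{0,t-1\},t];H^{\alpha-1}(\Omega))}$: the paper interpolates on the Sobolev scale, $\|f(u)\|_{H^{\alpha-1}}\leq\|f(u)\|_{H^{-1}}^{1-\alpha}\|f(u)\|_{L^2}^{\alpha}$, combined with $f(u)\in L^\infty(H^{-1})\cap L^1(L^2)$ and H\"older in time (using $2\alpha<1$), whereas you use the dual Sobolev embedding $L^{6/(5-2\alpha)}(\Omega)\subset H^{\alpha-1}(\Omega)$ and Lebesgue interpolation between $L^6$ and $L^{10}$ --- both computations are valid and yield the same conclusion under $q<4$, $\alpha<\tfrac12$.
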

\begin{proof}
Indeed, since $u\in L^5([0,T];L^{10}(\Omega))$ hence $f(u)\in L^1([0,T];L^2(\Omega))$ and to obtain \eqref{est u in L2(H a+1)} it remains to apply Corollary \ref{cor lin L2(H 1+a)} ( as before $\tau(t)=\max\{0,t-1\}$)
\begin{multline}
\label{3.61}
\|u\|_{L^2([\tau(t),t];H^{1+\alpha}(\Omega))}\leq C\(e^{-\beta t}\|\xi_0\|_{\Cal E}+\int_0^te^{-\beta(t-s)}\|f(u(s))\|ds\)\leq\\
C\(e^{-\beta t}\|\xi_0\|_{\Cal E}+1+\int_0^te^{-\beta(t-s)}\|u^q(s)\|ds\)\leq C\(e^{-\beta t}\|\xi_0\|_{\Cal E}+1+\int_0^te^{-\beta(t-s)}\|u(s)\|^5_{L^{10}(\Omega)}ds\).
\end{multline}
The last term of \eqref{3.61} can be estimated as follows
\begin{multline}
\int_0^te^{-\beta(t-s)}\|u(s)\|^5_{L^{10}(\Omega)}ds=\sum_{i=0}^{[t]-1}\int_i^{i+1}e^{-\beta(t-s)}\|u(s)\|^5_{L^{10}(\Omega)}ds+\int_{[t]}^te^{-\beta(t-s)}\|u(s)\|^5_{L^{10}(\Omega)}ds\leq\\
\sum_{i=0}^{[t]-1}e^{-\beta(t-i-1)}\|u\|^5_{L^5([i,i+1];L^{10}(\Omega))}+\|u\|^5_{L^5([[t],t];L^{10}(\Omega))}\leq\\
e^{-\beta t}\sum_{i=0}^{[t]-1}e^{\beta (i+1)}\(e^{-\beta (i+1)}Q(\|\xi_0\|_{\Cal E})+Q(\|g\|)\)^5+\(e^{-\beta t}Q(\|\xi_0\|_{\Cal E})+Q(\|g\|)\)^5.
\end{multline}
Using that for positive $a,b$ satisfy $(a+b)^5\leq 2^4(a^5+b^5)$ we proceed as follows
\begin{multline}
\label{3.62}
\int_0^te^{-\beta(t-s)}\|u(s)\|^5_{L^{10}(\Omega)}ds\leq\\
16e^{-\beta t}\sum_{i=0}^{[t]-1}\(e^{-4\beta(i+1)}Q^5(\|\xi_0\|_\Cal{E})+e^{\beta(i+1)}Q^5(\|g\|)\)+16e^{-5\beta t}Q^5(\|\xi_0\|_\Cal{E})+Q^5(\|g\|)\leq\\
e^{-\beta t}Q_1(\|\xi_0\|_\Cal{E})+Q_1(\|g\|),
\end{multline}
where $Q_1$ is a monotone increasing function which does not depend on $t$. Combining \eqref{3.61} and \eqref{3.62} we get \eqref{est u in L2(H a+1)}.

Expressing $\Dt^2 u$ from equation \eqref{eq main}, and taking into account that $\alpha\in(0,\frac{1}{2})$, we find 
\begin{multline}
\|\Dt^2 u(t)\|_{L^2([\tau(t),t];H^{\alpha-1}(\Omega))}\leq \|u(t)\|_{L^2([\tau(t),t];H^{\alpha+1}(\Omega))}+\|\Dt u(t)\|_{L^2([\tau(t),t];H^\alpha(\Omega))}+\\
\|f(u)\|^{(1-\alpha)}_{L^\infty([\tau(t),t];H^{-1}(\Omega))}\|f(u)\|^{\alpha}_{L^1([\tau(t),t];L^2(\Omega))}+\|g\|,
\end{multline}
that due to estimates \eqref{est dissip}, \eqref{est L5L10}, \eqref{est u in L2(H a+1)} easily implies \eqref{est dtt u in L2(H a-1)}.
\end{proof}
Strichartz type estimate \eqref{est L5L10} allows us to prove uniqueness of Shatah-Struwe solutions
\begin{theorem}
\label{th uniq.cont}
Let assumptions of Theorem \ref{th StrEx} hold, $u_1$ and $u_2$ be Shatah-Struwe solutions of \eqref{eq main} with initial data $\xi^1_0,\xi^2_0\in\Cal{E}$ respectively. Then the following estimate holds
\begin{equation}
\|\xi_{u_1}-\xi_{u_2}\|_{L^\infty([0,T];\Cal{E})}+\|u_1-u_2\|_{L^5([0,T];L^{10}(\Omega))}\leq Q\(T,\|\xi^i_0\|_\Cal{E},\|g\|\)\|\xi^1_0-\xi^2_0\|_\Cal{E},
\end{equation}
for some monotone increasing $Q$. In particular Shatah-Struwe solution is unique and depends continuously on initial data.
\end{theorem}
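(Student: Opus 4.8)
The plan is to reduce the difference of the two solutions to the linear damped wave equation and treat the difference of the nonlinearities as a source term, controlling it through the Strichartz norm. Set $w:=u_1-u_2$; since $u_1,u_2$ solve \eqref{eq main}, $w$ is an energy solution of the linear problem \eqref{eq ldwv} with right-hand side $h:=-(f(u_1)-f(u_2))$ and initial data $\xi_w(0)=\xi^1_0-\xi^2_0$. The first step is to check that $h\in L^1([0,T];L^2(\Omega))$ and to bound it by $w$. Writing $f(u_1)-f(u_2)=\big(\int_0^1 f'(\theta u_1+(1-\theta)u_2)\,d\theta\big)\,w$ and using \eqref{f ass1} gives the pointwise bound $|f(u_1)-f(u_2)|\leq C(1+|u_1|^q+|u_2|^q)|w|$. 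H\"older in space with the split $\frac12=\frac25+\frac{1}{10}$, together with the embedding $L^{10}(\Omega)\subset L^{5q/2}(\Omega)$ (valid since $q\leq 4$ and $\Omega$ is bounded), yields
\[
\|f(u_1)-f(u_2)\|\leq C\big(1+\|u_1\|_{L^{10}(\Omega)}^q+\|u_2\|_{L^{10}(\Omega)}^q\big)\|w\|_{L^{10}(\Omega)}.
\]

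The second step is the bookkeeping in time, which is where subcriticality $q<4$ enters. Integrating the previous bound over a subinterval $I\subset[0,T]$ of length $\ell$ and applying H\"older in time (placing $\|u_i\|_{L^{10}}^q$ in $L^{5/q}(I)$ and $\|w\|_{L^{10}}$ in $L^5(I)$) produces
\[
\|h\|_{L^1(I;L^2(\Omega))}\leq C\Big(\ell^{4/5}+\ell^{(4-q)/5}\big(\|u_1\|_{L^5(I;L^{10}(\Omega))}^q+\|u_2\|_{L^5(I;L^{10}(\Omega))}^q\big)\Big)\|w\|_{L^5(I;L^{10}(\Omega))}.
\]
The positive power $\ell^{(4-q)/5}$, available precisely because $q<4$, is what lets the coefficient be made small; at $q=4$ this factor disappears and the argument degenerates into the critical one mentioned in the introduction.

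The third step combines this with the linear estimates. Applying Corollary \ref{cor nonhomstri} and Proposition \ref{prop lin basic} to $w$ on $I$ gives
\[
\|w\|_{L^5(I;L^{10}(\Omega))}+\|\xi_w\|_{L^\infty(I;\Cal{E})}\leq C\big(\|\xi_w(\inf I)\|_\Cal{E}+\|h\|_{L^1(I;L^2(\Omega))}\big).
\]
By Theorem \ref{th StrEx} the norms $\|u_i\|_{L^5([0,T];L^{10}(\Omega))}$ are finite and bounded through $\|\xi_0^i\|_\Cal{E}$ and $\|g\|$; since $t\mapsto\|u_i(t)\|_{L^{10}(\Omega)}^5$ is integrable, absolute continuity lets me partition $[0,T]$ into finitely many subintervals $I_0,\dots,I_{N-1}$ — with $N$ depending only on $T$, $\|\xi_0^i\|_\Cal{E}$, $\|g\|$ — on each of which the coefficient in the bound for $\|h\|_{L^1(I;L^2)}$ is at most $\tfrac{1}{2C}$. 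The source term is then absorbed into the left-hand side, leaving $\|w\|_{L^5(I_j;L^{10})}+\|\xi_w\|_{L^\infty(I_j;\Cal{E})}\leq C'\|\xi_w(t_j)\|_\Cal{E}$ with $t_j=\inf I_j$.

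The final step is to iterate across the partition: the subinterval estimate gives $\|\xi_w(t_{j+1})\|_\Cal{E}\leq C'\|\xi_w(t_j)\|_\Cal{E}$, hence $\|\xi_w(t_j)\|_\Cal{E}\leq (C')^j\|\xi^1_0-\xi^2_0\|_\Cal{E}$, and summing the $N$ local bounds yields the asserted inequality with $Q(T,\|\xi_0^i\|_\Cal{E},\|g\|)$ of order $(C')^N$. Uniqueness follows by taking $\xi^1_0=\xi^2_0$, and continuous dependence is the Lipschitz bound itself. I expect the main obstacle to be making the absorption uniform, i.e.\ ensuring that the number of subintervals, and thus the growth of $Q$, is governed purely by the a priori Strichartz bounds of Theorem \ref{th StrEx} and not by the solutions themselves; the nonlinear and H\"older estimates in the first two steps are routine once subcriticality supplies the gain $\ell^{(4-q)/5}$.
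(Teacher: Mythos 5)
Your proposal is correct and follows essentially the same route as the paper: reduce $w=u_1-u_2$ to the linear problem with source $f(u_1)-f(u_2)$, estimate that source via H\"older in space and time to extract the subcritical gain $\ell^{(4-q)/5}$, absorb using the linear Strichartz estimate on short intervals, and iterate geometrically. The only cosmetic difference is that the paper fixes a single step length $\delta$ explicitly from the uniform bound \eqref{est L5L10} rather than invoking absolute continuity of the integral, but in both cases the number of subintervals is controlled by $T$, $\|\xi^i_0\|_{\Cal E}$ and $\|g\|$, so the resulting $Q$ is the same.
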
  
\begin{proof}
Assume $u_1$ and $u_2$ be two Shatah-Struwe solutions of problem \eqref{eq main} with initial data $\xi^1_0$, $\xi^2_0\in\Cal{E}$ respectively. Then the difference $v=u_1-u_2$ satisfies the equation 
\begin{equation}
\label{eq u1-u2}
\begin{cases}
\Dt^2v -\Dx v+\gamma (-\Dx)^\alpha \Dt v= f(u_1)-f(u_2),\\
\xi_v(0)=\xi^1_0-\xi^2_0,\quad v|_{\partial\Omega}=0.
\end{cases}
\end{equation}
Let $\delta>0$ be fixed, small enough and to be determined below. Due to growth assumption \eqref{f ass1} one can easily check that $f(u_1)-f(u_2)$ belongs to $L^1([0,T];L^2(\Omega))$. Thus using Proposition \ref{prop StrLin} to \eqref{eq u1-u2} and $\eqref{f ass1}$ we derive
\begin{multline}
\label{3.7}
\|\xi_v\|_{L^\infty([0,\delta];\Cal{E})}+\|v\|_{L^5([0,\delta];L^{10}(\Omega))}\leq C\|\xi^1_0-\xi^2_0\|_\Cal{E}+\|f(u_1)-f(u_2)\|_{L^1([0,\delta];L^2(\Omega))}\leq\\
C\|\xi^1_0-\xi^2_0\|_\Cal{E}+\|\int_0^1f'(\lambda u_2+(1-\lambda)u_1)d\lambda\ v\|_{L^1([0,\delta];L^2(\Omega))}\leq\\
C\|\xi^1_0-\xi^2_0\|_\Cal{E}+ C\int_0^\delta(1+|u_1|^{2q}+|u_2|^{2q},|v|^2)^\frac{1}{2}dt\leq\\
C\|\xi^1_0-\xi^2_0\|_\Cal{E}+C\int_0^\delta\|v\|dt+C\sum_{i=1}^2\int_0^\delta\|u_i\|^q_{L^\frac{5q}{2}(\Omega)}\|v\|_{L^{10}(\Omega)}dt=C\|\xi^1_0-\xi^2_0\|_\Cal{E}+A+B_1+B_2.
\end{multline}
By Sobolev embedding theorem one deduces
\begin{equation}
\label{3.8}
A\leq C\int_0^\delta\|v\|_{L^{10}(\Omega)}dt\leq C\|v\|_{L^5[0,\delta];L^{10}(\Omega))}\delta^\frac{4}{5}.
\end{equation}
The fact that $q\in [0,4)$ and \eqref{est L5L10} implies that $B_i$ can be estimated as
\begin{multline}
\label{3.9}
B_i\leq C\int_0^\delta\|u_i\|^q_{L^{10}(\Omega)}\|v\|_{L^{10}(\Omega)}dt\leq C\(\int_0^\delta\|u_i\|^\frac{5q}{4}_{L^{10}(\Omega)}dt\)^\frac{4}{5}\|v\|_{L^5([0,\delta];L^{10}(\Omega))}\leq\\
C\delta^\frac{4-q}{5}\|u_i\|^q_{L^5([0,\delta];L^{10}(\Omega))}\|v\|_{L^5([0,\delta];L^{10}(\Omega))}\leq C\delta^\frac{4-q}{5}\( Q(\|\xi^i_0\|_\Cal{E})+Q(\|g\|)\)^q\|v\|_{L^5([0,\delta];L^{10}(\Omega))}.
\end{multline}
Now  choosing $\delta$ such that 
\begin{equation}
\label{delta choice}
\delta^\frac{4-q}{5}=\frac{1}{2}\frac{1}{C+C\sum_{i=1}^2\(Q(\|\xi^i_0\|_\Cal{E})+Q(\|g\|)\)^q},
\end{equation}
we see that from \eqref{3.7}-\eqref{3.9} follows that 
\begin{equation}
\label{3.10}
\|\xi_v\|_{L^\infty([0,\delta];\Cal{E})}+\|v\|_{L^5([0,\delta];L^{10}(\Omega))}\leq 2C\|\xi^1_0-\xi^2_0\|_\Cal{E}.
\end{equation}
Fixing arbitrary $T>0$ and applying \eqref{3.10} on segments $I_0=[0,\delta],\ I_1=[\delta,2\delta],\ldots,$ $I_{[T/\delta]}=[\delta[T/\delta],T]$ we find  
\begin{equation}
\label{3.11}
\|v\|_{L^\infty(I_k;\Cal{E})}\leq (2C)^{k+1}\|\xi^1_0-\xi^2_0\|_\Cal{E},\quad k=0,\ldots, [T/\delta].
\end{equation}
Hence
\begin{equation}
\|v\|_{L^\infty([0,T];\Cal{E})}\leq (2C)^{[T/\delta]+1}\|\xi^1_0-\xi^2_0\|_\Cal{E}\leq Q(T,\|\xi^i_0\|_\Cal{E},\|g\|)\|\xi^1_0-\xi^2_0\|_\Cal{E},
\end{equation}
where $Q$ is a monotone increasing function. Furthermore, from \eqref{3.10}, \eqref{3.11} follows
\begin{multline}
\label{3.12}
\|v\|_{L^5([0,T];L^{10}(\Omega))}\leq \sum_{k=0}^{[T/\delta]}\|v\|_{L^5(I_k;\Cal{E})}\leq\|\xi^1_0-\xi^2_0\|_\Cal{E}\sum_{k=0}^{[T/\delta]}(2C)^{k+1}\leq\\
(2C)^{[T/\delta]+2}\|\xi^1_0-\xi^2_0\|_\Cal{E}\leq Q(T,\|\xi^i_0\|_\Cal{E},\|g\|)\|\xi^1_0-\xi^2_0\|_\Cal{E},
\end{multline}
for some monotone increasing function $Q$ that finishes the proof.
\end{proof}
\section{Smoothing property of Shatah-Struwe solutions}

In this section we show that hyperbolic-like equation \eqref{eq main}, in fact, possesses smoothing property similar ( but weaker) to usual parabolic equations. We note that this effect also occurs when $\alpha = \frac{1}{2}$ ( see \cite{AZqdw}) and $\alpha\in\(\frac{1}{2},1\)$ ( see \cite{KZwvEq2009}).

As usual first we prove an auxiliary result which basically says that solution of \eqref{eq main} is more regular when initial data are more regular.
\begin{theorem}
\label{th sm1}
Let assumptions of Theorem \ref{th StrEx} be satisfied and let $u$ be a Shatah-Struwe solution of \eqref{eq main} with initial data such that
\begin{equation}
\xi_u(0)\in H^2(\Omega)\cap H^1_0(\Omega)\times H^1_0(\Omega):=\Cal{E}_1.
\end{equation} 
Then $\xi_u(t)\in\Cal{E}_1$ and $\xi_{\Dt u}(t)\in\Cal{E}$ for all $t\geq 0$. Furthermore, the following estimate holds
\begin{multline}
\|\xi_u(t)\|_{\Cal{E}_1}+\|\xi_{\Dt u}(t)\|_\Cal{E}+\|\Dt u(s)\|_{L^5([0,t];L^{10}(\Omega))}\leq\\
e^{\(Q(\|\xi_0\|_\Cal{E})+Q(\|g\|)\)t}\|\xi_{\Dt u}(0)\|_\Cal{E}+Q(\|\xi_0\|_\Cal{E})+Q(\|g\|),
\end{multline}
for some constant $C$ and increasing function $Q$ independent of $t$.
\end{theorem}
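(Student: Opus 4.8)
The plan is to differentiate the equation in time and treat the resulting linear problem for $w:=\Dt u$ with the Strichartz and energy estimates already established for \eqref{eq ldwv}. Differentiating \eqref{eq main} formally gives
\begin{equation}
\Dt^2 w+\gamma(-\Dx)^\alpha\Dt w-\Dx w=-f'(u)w,\qquad \xi_w(0)=(u_1,\Dt^2u(0)),
\end{equation}
where $\Dt^2u(0)=\Dx u_0-\gamma(-\Dx)^\alpha u_1-f(u_0)+g$. First I would check $\xi_w(0)\in\Cal{E}$: since $u_0\in H^2\cap H^1_0$ we have $\Dx u_0\in L^2(\Omega)$; since $\alpha<\tfrac12$ and $u_1\in H^1_0\subset H^{2\alpha}$ we have $(-\Dx)^\alpha u_1\in L^2(\Omega)$; and since $H^2(\Omega)\subset L^\infty(\Omega)$ in dimension three, $f(u_0)\in L^\infty\subset L^2$. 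To make the differentiation rigorous one works with the Galerkin approximations $u_N$ from the proof of Theorem \ref{th StrEx}, differentiates the finite-dimensional smooth equations in time, and passes to the limit in the uniform estimates below.

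The heart of the argument is a coupled energy--Strichartz estimate for $w$ in which $-f'(u)w$ plays the role of the forcing $h$. Using \eqref{f ass1} and splitting the $L^2$ product into $L^{5q/2}\times L^{10}$ together with the embedding $L^{10}(\Omega)\subset L^{5q/2}(\Omega)$ (valid since $q\le 4$), one has the pointwise bound $\|f'(u)w\|\le C\|w\|+C\|u\|_{L^{10}(\Omega)}^q\|w\|_{L^{10}(\Omega)}$. Integrating over a short interval $[t_0,t_0+\delta]$ and applying H\"older in time yields
\begin{equation}
\|f'(u)w\|_{L^1([t_0,t_0+\delta];L^2(\Omega))}\leq C\delta\sup_{[t_0,t_0+\delta]}\|\xi_w\|_\Cal{E}+C\delta^{\frac{4-q}{5}}\|u\|^q_{L^5([t_0,t_0+\delta];L^{10}(\Omega))}\|w\|_{L^5([t_0,t_0+\delta];L^{10}(\Omega))}.
\end{equation}
This is exactly where sub-quinticity enters: the power $\tfrac{4-q}{5}$ is strictly positive only because $q<4$. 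By Theorem \ref{th StrEx} the factor $\|u\|_{L^5([t_0,t_0+\delta];L^{10}(\Omega))}$ is bounded by some $Q_*:=Q(\|\xi_0\|_\Cal{E})+Q(\|g\|)$. Applying Proposition \ref{prop StrLin} and Proposition \ref{prop lin basic} to the $w$-equation on $[t_0,t_0+\delta]$ and inserting the previous bound, I would fix $\delta=\delta(Q_*)>0$ so small that $C\delta^{\frac{4-q}{5}}Q_*^q\le\tfrac12$, absorb the troublesome terms into the left-hand side, and obtain
\begin{equation}
\sup_{[t_0,t_0+\delta]}\|\xi_w\|_\Cal{E}+\|w\|_{L^5([t_0,t_0+\delta];L^{10}(\Omega))}\leq C_0\|\xi_w(t_0)\|_\Cal{E}
\end{equation}
with $C_0$ and $\delta^{-1}$ depending only on $Q_*$, $\alpha$, $\gamma$.

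Next I would iterate this local estimate over the $\lceil t/\delta\rceil$ consecutive intervals of length $\delta$ covering $[0,t]$. The bound $\|\xi_w(t_0+\delta)\|_\Cal{E}\le C_0\|\xi_w(t_0)\|_\Cal{E}$ gives $\|\xi_w(t)\|_\Cal{E}\le C_0^{t/\delta}\|\xi_w(0)\|_\Cal{E}=e^{(\ln C_0/\delta)t}\|\xi_w(0)\|_\Cal{E}$; since $C_0$ and $\delta^{-1}$ depend only on $Q_*$, the rate $\ln C_0/\delta$ is bounded by an increasing function of $\|\xi_0\|_\Cal{E}$ and $\|g\|$, producing the factor $e^{(Q(\|\xi_0\|_\Cal{E})+Q(\|g\|))t}$, and summing the Strichartz norms over the same intervals gives the $\|\Dt u\|_{L^5([0,t];L^{10}(\Omega))}$ term with the same factor. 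This handles $\|\xi_{\Dt u}(t)\|_\Cal{E}$ and the Strichartz norm. The remaining piece $\|u(t)\|_{H^2(\Omega)}$ of $\|\xi_u(t)\|_{\Cal{E}_1}$ I would recover from elliptic regularity: rewriting \eqref{eq main} as $-\Dx u=g-f(u)-\Dt^2u-\gamma(-\Dx)^\alpha\Dt u$ gives $\|u(t)\|_{H^2}\le C(\|\Dt^2 u(t)\|+\|\Dt u(t)\|_{H^1}+\|f(u(t))\|+\|g\|)$, where the first two terms are absorbed into $\|\xi_{\Dt u}(t)\|_\Cal{E}$ (the exponential part) and $\|f(u(t))\|\le C(1+\|u(t)\|_{L^{10}(\Omega)}^{q+1})$ is controlled by Theorem \ref{th StrEx} (the additive $Q$ part), the pointwise-in-$t$ statement being upgraded to all $t$ by continuity of $\xi_u$ in $\Cal{E}_1$.

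I expect the main obstacle to be precisely this coupled absorption step. The linearized potential $f'(u)w$ cannot be controlled by the energy norm of $w$ alone, which only yields $w\in L^6$ via Sobolev embedding; the Strichartz control $w\in L^{10}$ is indispensable, and it is the strict inequality $q<4$ that produces the positive power $\delta^{(4-q)/5}$ needed to absorb the bad term on short intervals. A secondary technical point is the rigorous justification of the time differentiation, which is cleanest at the Galerkin level.
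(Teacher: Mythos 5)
Your proposal follows essentially the same route as the paper: differentiate in time, view $w=\Dt u$ as a solution of the linear damped wave equation with forcing $-f'(u)w$, bound that forcing in $L^1_tL^2_x$ by splitting into $\|u\|^q_{L^{10}}\|w\|_{L^{10}}$, use $q<4$ to produce the positive power $\delta^{(4-q)/5}$ and absorb on short intervals via Proposition \ref{prop StrLin}, then iterate to get the exponential factor, and finally recover $\|u(t)\|_{H^2}$ from the equation. The one loose point is your claim that $\|f(u(t))\|\le C(1+\|u(t)\|^{q+1}_{L^{10}(\Omega)})$ is ``controlled by Theorem \ref{th StrEx}'': that theorem only bounds the $L^5$-in-time average of $\|u(t)\|_{L^{10}}$, not its pointwise-in-$t$ value, so as written this step does not close. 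The paper instead uses the interpolation $\|u\|_{L^{10}(\Omega)}\le C\|u\|_{H^{6/5}(\Omega)}\le C\|\nabla u\|^{4/5}\|\Dx u\|^{1/5}$ together with Young's inequality (again exploiting $q<4$) to absorb the resulting small power of $\|\Dx u\|$ into the left-hand side; the same device repairs your elliptic-regularity step, so the gap is minor and fixable rather than structural.
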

\begin{proof}
Below we restrict ourselves to a sketch of the proof which can be done completely rigorously using, for example, Galerkin method. Let $v:=\Dt u$ for brevity. Then $v$ solves
\begin{equation}
\label{eq dtu}
\begin{cases}
\Dt^2v-\Dx v+\gamma (-\Dx)^\alpha \Dt v +f'(u)v=0,\\
v|_{\partial\Omega}=0,\\
v(0)=\Dt u (0)=u_1,\ \Dt v(0)= \Dt^2 u(0):=\Dx u_0-\gamma(-\Dx)^\alpha u_1-f(u_0)+g.
\end{cases}
\end{equation}

From equation \eqref{eq main}, \eqref{f ass1} and the fact $H^2(\Omega)\subset L^\infty(\Omega)$ we see that 
\begin{equation}
\|\Dt^2 u(t)\|^2\leq Q(\|\xi_u(t)\|_{\Cal{E}_1})+\|g\|^2,\quad \forall t\geq 0. 
\end{equation}
for some monotone increasing function $Q$ which is independent of $t$ and initial data, that is 
\begin{equation}
\|\xi_v(t)\|^2_{\Cal{E}}\leq Q(\|\xi_u(t)\|_{\Cal{E}_1})+\|g\|^2,\quad \forall t\geq 0. 
\end{equation}
for some monotone increasing function $Q$ which is independent of $t$ and initial data. And vice versa, multiplying \eqref{eq main} by $-\Dx u$ one derives that
\begin{equation}
\|\Dx u(t)\|^2\leq C(\|\xi_v(t)\|^2_\Cal{E}+|(f(u),-\Dx u)|+\|g\|^2),\quad \forall t\geq 0,
\end{equation}
for some absolute constant $C$. Also non-linear term in the above estimate can be controlled as follows
\begin{multline}
|(f(u),-\Dx u)|\leq \|f(u)\|\|\Dx u\|\leq C\(1+\|u\|^{q+1}_{L^{10}(\Omega)}\)\|\Dx u\|\leq\\
 C\(1+\|u\|^{q+1}_{H^\frac{6}{5}(\Omega)}\)\|\Dx u\|\leq C\(1+\|\nabla u\|^\frac{4(q+1)}{5}\|\Dx u\|^\frac{q+1}{5}\)\|\Dx u\|\leq \eb\|\Dx u\|^2+C_\eb\|\nabla u\|^\frac{8(q+1)}{4-q}+C_\eb,
\end{multline}
consequently we derive
\begin{equation}
\|\xi_u(t)\|^2_{\Cal{E}_1}\leq C\(\|\xi_v(t)\|^2_\Cal{E}+Q(\|\xi_0\|_\Cal{E})+Q(\|g\|)\),\quad \forall t\geq 0,
\end{equation}
for some absolute constant $C$ and monotone increasing $Q$, and so we conclude that it is enough to control $\|\xi_v(t)\|_\Cal{E}$. 

Since $\xi_v(0)\in\Cal{E}$, applying Strichartz estimate \eqref{est StrLin} to \eqref{eq dtu} we get
\begin{equation}
\|\xi_v(t)\|_{L^\infty([0,\delta];\Cal{E})}+\|v\|_{L^5([0,\delta];L^{10}(\Omega))}\leq C\(\|\xi_v(0)\|_\Cal{E}+\int_0^\delta\|f'(u)v\|dt\).
\end{equation}
And arguing similar to \eqref{3.7}-\eqref{3.9} we deduce that 
\begin{multline}
\|\xi_v(t)\|_{L^\infty([0,\delta];\Cal{E})}+\|v\|_{L^5([0,\delta];L^{10}(\Omega))}\leq C\Big(\|\xi_v(0)\|_\Cal{E}+\delta^\frac{4}{5}\|v\|_{L^5([0,\delta];L^{10}(\Omega))}+\\ 
\delta^\frac{4-q}{5}\(Q(\|\xi_0\|_{\Cal{E}}+Q(\|g\|)\)^q\|v\|_{L^5([0,\delta];L^{10}(\Omega))}\Big).
\end{multline}
Thus for such $\delta$ that $\delta^\frac{4-q}{5}=\frac{1}{2}\frac{1}{C+C(Q(\|\xi_0\|_\Cal{E})+Q(\|g\|))^q}$ we have
\begin{equation}
\|\xi_v(t)\|_{L^\infty([0,\delta];\Cal{E})}+\|v\|_{L^5([0,\delta];L^{10}(\Omega))}\leq 2C\|\xi_v(0)\|_{\Cal{E}}.
\end{equation}

Repeating steps \eqref{3.11}-\eqref{3.12} we come up with
\begin{multline}
\|\xi_v\|_{L^\infty([0,T];\Cal{E})}+\|v\|_{L^5([0,T];L^{10}(\Omega))}\leq (2C+4C^2)(2C)^{[T/\delta]}\|\xi_v(0)\|_\Cal{E}\leq\\ 
(2C+4C^2)exp\{\frac{T}{\delta}\ln(2C)\}\|\xi_v(0)\|_\Cal{E}\leq e^{T\(Q(\|\xi_0\|_\Cal{E})+Q(\|g\|)\)}\|\xi_v(0)\|_\Cal{E},
\end{multline}
for an increasing function $Q$ independent of $T$, that completes the proof.
\end{proof}

The next theorem gives the above mentioned smoothing property.
\begin{theorem}
\label{th sm2}
Let assumptions of Theorem \ref{th StrEx} hold and $u$ be a Shatah-Struwe solution of problem \eqref{eq main}. Then $\xi_u(t)\in \Cal{E}_1$, $\xi_{\Dt u}\in \Cal{E}$ for any $t>0$. Moreover, the following estimate holds
\begin{multline}
\label{est sm on [0,d]}
\sup_{t\in (0,\delta_0]}t^\frac{1}{\alpha}\|\xi_u(t)\|_{\Cal{E}_1}+\sup_{t\in (0,\delta_0]}t^\frac{1}{\alpha}\|\xi_{\Dt u}(t)\|_\Cal{E}+\|t^\frac{1}{\alpha}\Dt^2 u(t)\|_{L^2([0,\delta_0];H^\alpha(\Omega))}+\\
\|t^\frac{1}{\alpha}\Dt u(t)\|_{L^5([0,\delta_0];L^{10}(\Omega))}+\|t^\frac{1}{\alpha}u(t)\|_{L^2([0,\delta_0];H^{1+\alpha}(\Omega))}\leq
Q\(\|\xi_0\|_\Cal{E}\)+Q(\|g\|),
\end{multline}
where $\delta_0=\delta(\|\xi_0\|_{\Cal{E}},\|g\|)>0$ is small enough and $Q$ is some monotone increasing function independent of $t$.
\end{theorem}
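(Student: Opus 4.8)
The plan is to reduce the entire statement to a single weighted a priori estimate for $v:=\Dt u$, and to recover everything else from it. Differentiating \eqref{eq main} in time shows, exactly as in Theorem \ref{th sm1}, that $v$ solves the linearised problem \eqref{eq dtu}, whose forcing is $-f'(u)v$ and, crucially, contains no $g$. Moreover, the coercivity relation already used in Theorem \ref{th sm1} (multiply \eqref{eq main} by $-\Dx u$ and invoke \eqref{f ass1}) gives $\|\xi_u(t)\|^2_{\Cal{E}_1}\leq C\(\|\xi_v(t)\|^2_\Cal{E}+Q(\|\xi_0\|_\Cal{E})+Q(\|g\|)\)$, in which $g$ enters only through its $L^2$ norm since $\Dx u=\Dt^2 u+\gamma(-\Dx)^\alpha\Dt u+f(u)-g$ and $\alpha<\tfrac12$ make every term lie in $L^2$. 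Hence, once I control $t^{1/\alpha}\xi_v$ in $\Cal E$ together with the weighted dissipation $t^{1/\alpha}\Dt v\in L^2(H^\alpha)$ and the weighted Strichartz norm $t^{1/\alpha}v\in L^5(L^{10})$, every term of \eqref{est sm on [0,d]} follows at once. Working with $v$ rather than $u$ is essential precisely because the forcing $g\in L^2$ is killed by the time derivative and therefore never has to be differentiated in space.

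The available low-regularity information on $v$ is the sub-energy bound already proved: by \eqref{est dissip} and \eqref{est dtt u in L2(H a-1)} one has $v\in L^\infty([0,T];L^2)\cap L^2([0,T];H^\alpha)$ and $\Dt v=\Dt^2 u\in L^2([0,T];H^{\alpha-1})$. The reason Theorem \ref{th sm1} cannot be applied directly is that the Cauchy data $\xi_v(0)=\(u_1,\Dt^2 u(0)\)$, with $\Dt^2 u(0)=\Dx u_0-\gamma(-\Dx)^\alpha u_1-f(u_0)+g$, lie in a space of negative order, far below $\Cal E$; the gain of regularity must therefore be quantified through a time weight.

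The weighted estimate for $v$ I would obtain by an iteration that raises $v$ by $\alpha$ spatial derivatives per step while accumulating the time weight. Writing $y_0:=v$ and $y_k:=(-\Dx)^{\alpha/2}y_{k-1}=(-\Dx)^{k\alpha/2}v$, each $y_k$ solves the same linear damped wave equation with forcing $(-\Dx)^{k\alpha/2}\(-f'(u)v\)$; at stage $k$ I multiply by a smooth weight $\phi_k$ vanishing to order at least two at $t=0$ (admissible because $\tfrac1\alpha>2$), so that $z_k:=\phi_k y_k$ has zero Cauchy data, and I apply the linear machinery of Section 2 — Proposition \ref{prop StrLin}, Corollary \ref{cor lin L2(H 1+a)}, and the energy bound \eqref{2.0} — to $z_k$. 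Its right-hand side consists of the nonlinear term, kept subordinate by \eqref{f ass1} and the $L^5([0,T];L^{10})$ control of $u$ from Theorem \ref{th StrEx}, together with the weight commutators $\phi_k''(-\Dx)^{\alpha/2}y_{k-1}$, $\phi_k'(-\Dx)^{\alpha/2}\Dt y_{k-1}$ and $\gamma\phi_k'(-\Dx)^{\frac{3\alpha}{2}}y_{k-1}$. These three are square-integrable in space-time precisely by the stage-$(k-1)$ pointwise energy, dissipation, and $L^2(H^{1+\alpha})$ bounds (this is why all those norms must be propagated simultaneously), so they are admissible $L^1(L^2)$ and $L^2(L^2)$ sources. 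Choosing the weights to increase by slightly more than $\tfrac12$ at every step keeps the singular factors $\phi_k',\phi_k''$ integrable on $(0,\delta_0]$; after roughly $\lceil 1/\alpha\rceil$ steps the regularity of $v$ reaches energy level and the accumulated weight is bounded by $t^{1/\alpha}$. Finally, the computation is made rigorous by Galerkin approximation: the truncated data $\Cal P_N\xi_0\in\Cal E_1$, so Theorem \ref{th sm1} guarantees each $u_N$ is smooth enough to justify every multiplication and integration by parts, while the iteration yields $N$-uniform bounds that pass to the limit.

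The main obstacle is exactly what forces the incremental scheme. A single weighted energy estimate fails for two independent reasons: differentiating the weight $t^{1/\alpha}$ produces a non-integrable $t^{-1}$ factor multiplying the very quantity one is trying to bound, and $\Dt v=\Dt^2 u$ sits only in $H^{\alpha-1}$, which for $\alpha<\tfrac12$ is \emph{rougher} than the space $H^{-\alpha}$ dual to the damping-generated dissipation, so it is not an admissible source for the linear damped-wave estimates. The remedy is to gain only the $\alpha$ derivatives afforded by one action of the fractional damping at a time, so that at every stage the commutators are sourced by the strictly better regularity delivered by the previous stage and all weights stay integrable. The delicate bookkeeping — selecting weights that simultaneously vanish enough at $t=0$ for admissible Cauchy data, increase enough per step for the commutators to be legitimate sources, and accumulate to the stated $t^{1/\alpha}$, all while the sub-quintic nonlinear term remains subordinate at each intermediate regularity level — is the technical heart of the proof.
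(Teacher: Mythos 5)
Your reduction to $v=\Dt u$, the recognition that $\xi_v(0)$ is too rough so that a time weight vanishing to order at least two must be used to create zero Cauchy data, and the coercivity step recovering $\|\xi_u(t)\|_{\Cal E_1}$ from $\|\xi_v(t)\|_{\Cal E}$ all match the paper. But the core of your argument has a genuine gap, and the obstruction you invoke to justify the detour is not actually there. At stage $k$ of your bootstrap the forcing is $(-\Dx)^{k\alpha/2}\(f'(u)v\)$, and you claim it is kept subordinate by \eqref{f ass1} and the $L^5L^{10}$ control of $u$. Those ingredients only control quantities of the type $\|f'(u)v\|_{L^1L^2}$ with \emph{no} spatial derivatives; to place $(-\Dx)^{k\alpha/2}(f'(u)v)$ in $L^1L^2$ you need a fractional Leibniz rule together with bounds on positive-order derivatives of $f'(u)$, i.e.\ extra spatial regularity of $u$ itself --- which is exactly what the theorem is trying to produce and which your scheme never bootstraps alongside $v$. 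As written the argument is circular at every stage $k\ge1$.

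Moreover, the single weighted estimate you declare impossible is precisely the paper's proof. One applies the linear machinery of Section 2 to $t^kv$ with $k\ge 1/\alpha$; the source is $H=-f'(u)t^kv+2kt^{k-1}\Dt v+k(k-1)t^{k-2}v+\gamma kt^{k-1}(-\Dx)^\alpha v$, so the nonlinearity is never differentiated in space and is absorbed exactly as in your $H_1$-type term. The two commutators you consider fatal are handled by interpolating between an \emph{unweighted} low norm that is already known and a \emph{weighted} high norm that sits on the left-hand side of the estimate: for $2kt^{k-1}\Dt v$ one writes $L^2=[H^{\alpha-1},H^\alpha]_{1-\alpha}$, uses $\Dt^2u\in L^2H^{\alpha-1}$ from Corollary \ref{cor u in L2(H 1+a)}, and applies Young's inequality so that the weighted piece carries the factor $t^{(k-1)/(1-\alpha)}$, which is absorbable precisely when $(k-1)/(1-\alpha)\ge k$, i.e.\ $k\ge1/\alpha$ --- this is where the exponent $1/\alpha$ in the statement comes from; for $\gamma kt^{k-1}(-\Dx)^\alpha v$ one interpolates $H^{2\alpha}=[H^\alpha,H^{3\alpha}]_{1/2}$ and uses $H^{1+\alpha}\subset H^{3\alpha}$, splitting into the unweighted dissipation norm and the weighted $L^2H^{1+\alpha}$ norm on the left. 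So the $t^{-1}$ loss and the $H^{\alpha-1}$ regularity of $\Dt^2u$ are both absorbed in a single pass. To salvage your version you would have to run a parallel bootstrap of the spatial regularity of $u$ plus a fractional product estimate; the interpolation-and-absorption trick makes all of that unnecessary.
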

\begin{proof}
As in Theorem \ref{th sm1} we first obtain estimates for $\xi_v(t)$, where $v=\Dt u$. Let $k\geq 2$ be some fixed constant that will be specified below. Then, as one can see, $t^kv(t)$ solves
\begin{equation}
\label{eq tk v}
\begin{cases}
\Dt^2(t^kv)-\Dx(t^k v)+\gamma(-\Dx)^\alpha\Dt(t^k v)=H(t),\\
t^k v(t)|_{\partial\Omega}=0,\quad t^kv(t)|_{t=0}=0,\ \Dt \(t^k v(t)\)|_{t=0}=0, 
\end{cases}
\end{equation} 
where 
\begin{multline}
\label{H(t)}
H(t)=-f'(u)t^kv+2kt^{k-1}\Dt v+k(k-1)t^{k-2}v(t)+\gamma k t^{k-1} (-\Dx)^\alpha v:=\\
H_1(t)+H_2(t)+H_3(t)+H_4(t).
\end{multline}

Similarly to Theorem \ref{th sm1} we obtain estimates on some small segment $[0,\delta]$, where $\delta<1$ will be determined below. Applying estimates \eqref{2.0}, \eqref{est StrLin}, \eqref{est lin L2(H 1+a)} to \eqref{eq tk v} we find
\begin{multline}
\|\nabla(\delta^k v(\delta))\|+\|\Dt(t^kv(t))|_{t=\delta}\|+\|\Dt(t^k v(t))\|_{L^2([0,\delta];H^\alpha(\Omega))}+\\
\|t^k v(t)\|_{L^5([0,\delta];L^{10}(\Omega))}+\|t^kv(t)\|_{L^2([0,\delta];H^{1+\alpha}(\Omega))}\leq
 C\|H(t)\|_{L^1([0,\delta];L^2(\Omega))},
\end{multline} 
which implies
\begin{multline}
\label{4.4}
\delta^k\|\nabla v(\delta)\|+\delta^k\|\Dt v(\delta)\|+\|t^k \Dt v(t)\|_{L^2([0,\delta];H^\alpha(\Omega))}+\|t^k v(t)\|_{L^5([0,\delta];L^{10}(\Omega))}+\\
\|t^kv(t)\|_{L^2([0,\delta];H^{1+\alpha}(\Omega))}\leq
 k\|v(\delta)\|+k\|v(t)\|_{L^2([0,\delta];H^\alpha(\Omega))}+C\|H(t)\|_{L^1([0,\delta];L^2(\Omega))}.
\end{multline}
From dissipative estimate \eqref{est dissip} we conclude
\begin{equation}
\label{4.5}
 k\|v(\delta)\|+k\|v(t)\|_{L^2([0,\delta];H^\alpha(\Omega))}\leq k\Big(Q(\|\xi_0\|_\Cal{E})+Q(\|g\|)\Big).
\end{equation}
Let us estimate each $H_i(t)$ separately. $H_1(t)$ can be estimated as follows ( see also \eqref{3.9})
\begin{multline}
\label{est H1}
\|H_1(t)\|_{L^1([0,\delta];L^2(\Omega))}\leq C\|t^kv(t)\|_{L^1([0,\delta];L^2(\Omega))}+C\||u(t)|^q|t^kv(t)|\|_{L^1([0,\delta];L^2(\Omega))}\leq \\
C\|v\|_{L^1([0,\delta];L^2(\Omega))}+C\int_0^\delta\|u(t)\|^q_{L^{10}(\Omega)}\|t^kv(t)\|_{L^{10}(\Omega)}dt\leq Q(\|\xi_0\|_{\Cal{E}})+Q(\|g\|)\\
C\delta^\frac{4-q}{5}\(Q(\|\xi_0\|_{\Cal{E}})+Q(\|g\|)\)^q\|t^kv(t)\|_{L^5([0,\delta];L^{10}(\Omega))}.
\end{multline}

Using interpolation $[H^{\alpha-1}(\Omega),H^\alpha(\Omega)]_{1-\alpha}=L^2(\Omega)$ we derive the estimate for $H_2(t)$
\begin{multline}
\label{est H2}
\|H_2(t)\|_{L^1([0,\delta];L^2(\Omega))}\leq 2k\int_0^\delta t^{k-1}\|\Dt v(t)\|^{\alpha}_{H^{\alpha-1}(\Omega)}\|\Dt v(t)\|^{1-\alpha}_{H^\alpha(\Omega)}dt\leq\\2k\int_0^\delta\|\Dt v(t)\|_{H^{\alpha-1}(\Omega)}dt+
2k\int_0^\delta t^\frac{k-1}{1-\alpha}\|\Dt v(t)\|_{H^\alpha(\Omega)}dt\leq 2k\delta^\frac{1}{2}\|\Dt v(t)\|_{L^2([0,\delta];H^{\alpha-1}(\Omega))}+\\
2k\delta^\frac{1}{2}\|t^\frac{k-1}{1-\alpha}\Dt v(t)\|_{L^2([0,\delta];H^\alpha(\Omega))}.
\end{multline}
Thus, since $\delta$ is small, we choose $k$ in such way that $\frac{k-1}{1-\alpha}\geq k$, that is $k\geq\frac{1}{\alpha}$.

Due to energy estimate \eqref{est dissip} the control of $H_3(t)$ is trivial
\begin{equation}
\label{est H3}
\|H_3(t)\|_{L^1([0,\delta];L^2(\Omega))}\leq Q(\|\xi_0\|_{\Cal{E}})+Q(\|g\|). 
\end{equation}

Due to interpolation $[H^\alpha(\Omega),H^{3\alpha}(\Omega)]_\frac{1}{2}=H^{2\alpha}(\Omega)$, continuous embedding $H^{\alpha+1}(\Omega)\subset H^{3\alpha}(\Omega)$ and the fact $k\geq\frac{1}{\alpha}>2$ we can estimate $H_4(t)$-term as follows
\begin{multline}
\label{est H4}
\|H_4(t)\|_{L^1([0,\delta];L^2(\Omega))}\leq \gamma k \int_0^\delta t^{k-1}\|v(t)\|^\frac{1}{2}_{H^\alpha(\Omega)}\|v(t)\|^\frac{1}{2}_{H^{1+\alpha}(\Omega)}dt\leq\\
\gamma k\int_0^\delta\|v(t)\|_{H^\alpha(\Omega)}+t^{k+(k-2)}\|v(t)\|_{H^{1+\alpha}(\Omega)}dt
\leq \gamma k\delta^\frac{1}{2}\|v(t)\|_{L^2([0,\delta];H^\alpha(\Omega))}+\\
\gamma k\delta^\frac{1}{2}\|t^kv(t)\|_{L^2([0,\delta];H^{1+\alpha}(\Omega))}.
\end{multline}

Thus plugging in estimates \eqref{4.5}-\eqref{est H4} to \eqref{4.4} and choosing $\delta=\delta_0=\delta\(\|\xi_0\|_{\Cal{E}},\|g\|\)>0$ small enough  we conclude
\begin{multline}
\delta^\frac{1}{\alpha}\|\nabla v(\delta)\|+\delta^\frac{1}{\alpha}\|\Dt v(\delta)\|+\|t^\frac{1}{\alpha} \Dt v(t)\|_{L^2([0,\delta];H^\alpha(\Omega))}+\|t^\frac{1}{\alpha} v(t)\|_{L^5([0,\delta];L^{10}(\Omega))}+\\
\|t^\frac{1}{\alpha}v(t)\|_{L^2([0,\delta];H^{1+\alpha}(\Omega))}\leq Q\(\|\xi_0\|_{\Cal{E}}\)+Q(\|g\|),\quad \mbox{as }\delta\leq \delta_0.
\end{multline}
for some monotone increasing $Q$.
\end{proof}
\begin{rem}
The multiplier $t^\frac{1}{\alpha}$ in Theorem \ref{th sm2} is not optimal. Indeed, considering homogeneous problem \eqref{eq ldwv} and taking into account that $e^{-\frac{\gamma}{2}(-\Dx)^\alpha t}$ is analytic, representation $u(t)=e^{-\frac{\gamma}{2}(-\Dx)^\alpha t}v(t)$ shows that the optimal multiplier would be $t^\frac{1}{2\alpha}$. Since for our purposes this is not important we do not investigate this question further.
\end{rem}
Now we are able to prove a dissipative variant of Theorem \ref{th sm1} 
\begin{theorem}
\label{th sm3}
Let assumptions of Theorem \ref{th StrEx} hold and $u$ be a Shatah-Struwe solution of problem \eqref{eq main} with initital data $\xi_0\in\Cal E_1$. Then there holds inequality 
\begin{equation}
\label{est e1.dis}
\|\xi_u(t)\|_{\Cal{E}_1}\leq Q(\|\xi_0\|_{\Cal E_1})e^{-\beta t}+Q(\|g\|), 
\end{equation}
for some monotone increasing function $Q$.
\end{theorem}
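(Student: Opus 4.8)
The plan is to deduce the dissipative bound \eqref{est e1.dis} from three facts that are already available: the dissipative energy estimate \eqref{est dissip}, the instantaneous smoothing of Theorem \ref{th sm2}, and the (non‑dissipative) propagation of $\Cal E_1$‑regularity of Theorem \ref{th sm1}. The heavy analytic work has been done in establishing those results, so here I would only assemble them, splitting the time axis into a bounded transient interval, on which Theorem \ref{th sm1} is used, and an asymptotic region, on which the smoothing property applied \emph{inside the energy absorbing ball} produces a bound depending on $\|g\|$ alone. The key structural observation is that, since $\|\cdot\|_\Cal{E}\le\|\cdot\|_{\Cal E_1}$, every quantity depending on $\|\xi_0\|_\Cal{E}$ may be freely re‑expressed as a monotone function of $\|\xi_0\|_{\Cal E_1}$.

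First I would pin down the absorbing set in $\Cal E$. By \eqref{est dissip} there is a radius $R_0:=2Q(\|g\|)$ and a time $T_0=T_0(\|\xi_0\|_\Cal{E})$ with $\|\xi_u(t)\|_\Cal{E}\le R_0$ for all $t\ge T_0$. Using that \eqref{eq main} is autonomous, I would restart the solution at any $s\ge T_0$: then $\|\xi_u(s)\|_\Cal{E}\le R_0$, so Theorem \ref{th sm2}, whose parameter $\delta_0=\delta_0(R_0,\|g\|)$ now depends on $\|g\|$ only, gives after evaluating the weight $t^{1/\alpha}$ at the right endpoint
\begin{equation}
\|\xi_u(s+\delta_0)\|_{\Cal E_1}\le \delta_0^{-1/\alpha}\(Q(R_0)+Q(\|g\|)\)\le Q(\|g\|).
\end{equation}
Choosing $s=t-\delta_0$ this shows $\|\xi_u(t)\|_{\Cal E_1}\le Q(\|g\|)$ for every $t\ge T_1:=T_0+\delta_0$, i.e. an absorbing ball in $\Cal E_1$ whose radius is independent of the initial data.

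It remains to control the transient interval $[0,T_1]$, where Theorem \ref{th sm1} applies since $\xi_0\in\Cal E_1$. Bounding $\|\xi_{\Dt u}(0)\|_\Cal{E}$ through the equation at $t=0$ (here $\Dt^2u(0)=\Dx u_0-\gamma(-\Dx)^\alpha u_1-f(u_0)+g$, with $u_0\in H^2(\Omega)\subset L^\infty(\Omega)$, so this is dominated by a monotone function of $\|\xi_0\|_{\Cal E_1}$ and $\|g\|$), Theorem \ref{th sm1} yields for all $t\in[0,T_1]$
\begin{equation}
\|\xi_u(t)\|_{\Cal E_1}\le e^{(Q(\|\xi_0\|_\Cal{E})+Q(\|g\|))T_1}\,\|\xi_{\Dt u}(0)\|_\Cal{E}+Q(\|\xi_0\|_\Cal{E})+Q(\|g\|)=:M.
\end{equation}
Both $T_1$ and $M$ are monotone in $\|\xi_0\|_{\Cal E_1}$ and $\|g\|$, and since $e^{-\beta t}\ge e^{-\beta T_1}$ on $[0,T_1]$ I may write $M\le M e^{\beta T_1}e^{-\beta t}$, which absorbs the whole transient into a single decaying term of the form $\tilde Q(\|\xi_0\|_{\Cal E_1},\|g\|)e^{-\beta t}$.

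Finally I would glue the two regimes: the transient bound on $[0,T_1]$ together with the absorbing‑ball bound $Q(\|g\|)$ on $[T_1,\infty)$ gives $\|\xi_u(t)\|_{\Cal E_1}\le\tilde Q(\|\xi_0\|_{\Cal E_1},\|g\|)e^{-\beta t}+Q(\|g\|)$ for all $t\ge 0$, and the splitting lemma \ref{lem Q(Ae-t+B)<Q(A)e-t+Q(B)} recasts this in the required form \eqref{est e1.dis}. I expect the only genuine subtlety to be this last bookkeeping: one must verify that the finite but data‑dependent factor $e^{\beta T_1}$ is folded into $Q(\|\xi_0\|_{\Cal E_1})$ and not into the $\|g\|$‑term, and that each hidden $\|\xi_0\|_\Cal{E}$‑dependence is upgraded to $\|\xi_0\|_{\Cal E_1}$. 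A direct alternative would be to feed $-f'(u)\Dt u$ into Proposition \ref{prop lin basic} to run a dissipative energy estimate for $v=\Dt u$; but controlling $\int_0^te^{-\beta(t-s)}\|f'(u)v\|\,ds$ requires the Strichartz norm of $v$, which is precisely the quantity one is trying to bound dissipatively, forcing a self‑referential small‑time argument. The two‑regime route avoids this by confining the nonlinear difficulty to the bounded interval $[0,T_1]$.
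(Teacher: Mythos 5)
Your proof is correct, and it rests on exactly the same three ingredients as the paper's argument: Theorem \ref{th sm1} for the transient, Theorem \ref{th sm2} combined with the dissipative estimate \eqref{est dissip} for large times, and the splitting Lemma \ref{lem Q(Ae-t+B)<Q(A)e-t+Q(B)} for the final bookkeeping. The difference is in how the large-time regime is organized. The paper applies \eqref{est sm on [0,d]} on \emph{every} shifted segment $[t,t+\delta_0]$ and obtains the decay of the $\Cal E_1$-norm directly by composing $Q$ with the decaying $\Cal E$-bound, i.e.\ $\|\xi_u(t+\delta_0)\|_{\Cal E_1}\leq \delta_0^{-1/\alpha}\big(Q\big(e^{-\beta t}Q(\|\xi_0\|_{\Cal E})+Q(\|g\|)\big)+Q(\|g\|)\big)$, and then splits via Lemma \ref{lem Q(Ae-t+B)<Q(A)e-t+Q(B)}. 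You instead wait until the trajectory has entered the $\Cal E$-absorbing ball, apply the smoothing only there (so that $\delta_0$ and the resulting $\Cal E_1$-bound depend on $\|g\|$ alone), and manufacture the exponential factor on the data-dependent transient $[0,T_1]$ via $M\leq Me^{\beta T_1}e^{-\beta t}$. Your variant makes the uniform choice of $\delta_0$ transparent — in the paper's version $\delta_0$ depends on $\|\xi_u(t)\|_{\Cal E}$, hence on the initial data, and the prefactor $\delta_0^{-1/\alpha}$ must also be absorbed into the final $Q$, a point the paper glosses over — and it produces explicitly the $\Cal E_1$-absorbing ball of radius $Q(\|g\|)$, which is what is actually needed later in Theorem \ref{th attr.ex}. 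The price is the extra care in splitting the two-variable quantity $\tilde Q(\|\xi_0\|_{\Cal E_1},\|g\|)e^{-\beta t}$ into the two terms of \eqref{est e1.dis}; you correctly flag this, and it is handled by the same monotonicity trick ($\tilde Q(a,b)\leq \tilde Q_*(2a)+\tilde Q_*(2b)$ with $e^{-\beta t}\leq 1$ on the $\|g\|$-part) that underlies the proof of Lemma \ref{lem Q(Ae-t+B)<Q(A)e-t+Q(B)}. Both routes are sound; yours is, if anything, slightly more careful.
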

\begin{proof}
Indeed, for $t\in[0,\delta_0]$ estimate \eqref{est e1.dis} follows from Theorem \ref{th sm1}. Also writing down estimate \eqref{est sm on [0,d]} on segment $[t,t+\delta_0]$ with $t>0$ we conclude
\begin{multline}
\delta_0^\frac{1}{\alpha}\|\xi_u(t+\delta_0)\|_{\Cal{E}_1}+\delta_0^\frac{1}{\alpha}\|\xi_{\Dt u}(t+\delta_0)\|_\Cal{E}+\|s^\frac{1}{\alpha}\Dt^2 u(s)\|_{L^2([t,t+\delta_0];H^\alpha(\Omega))}+\\
\|s^\frac{1}{\alpha}\Dt u(s)\|_{L^5([t,t+\delta_0];L^{10}(\Omega))}+\|s^\frac{1}{\alpha}\Dt u(s)\|_{L^2([t,t+\delta_0];H^{1+\alpha}(\Omega))}\leq
Q\(\|\xi_u(t)\|_\Cal{E}\)+Q(\|g\|)\leq\\
Q\(e^{-\beta t}Q(\|\xi_0\|_\Cal{E})+Q(\|g\|)\)+Q(\|g\|),\ \forall t>0,
\end{multline} 
where at the last step we used \eqref{est dissip}. The above
estimate gives the desired the result due to the lemma below. 
\end{proof}
For the convenience of the reader we present the next lemma proven in \cite{vz96}. 
\begin{lemma}
\label{lem Q(Ae-t+B)<Q(A)e-t+Q(B)}
Let $Q:\R_+\to \R_+$ be a smooth function, $L_1,L_2\in\R_+$ and $\alpha>0$. Then there exists a monotone increasing function $Q_1:\R_+\to\R_+$ such that
\begin{equation}
Q(L_1+L_2e^{-\alpha t})\leq Q_1(L_1)+Q_1(L_2)e^{-\alpha t}.
\end{equation}
\end{lemma}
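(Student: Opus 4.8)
The plan is to reduce the claim to a pointwise inequality in the two variables $L_1,L_2$ after the substitution $s:=e^{-\alpha t}$, which ranges over $(0,1]$ as $t$ runs over $[0,\infty)$; the resulting bound is then to hold for all $L_1,L_2\ge 0$ and $s\in(0,1]$, with a single increasing $Q_1$ depending only on $Q$. First I would reduce to the case in which $Q$ is monotone increasing: replacing $Q$ by its increasing majorant $\tilde Q(r):=\sup_{0\le\rho\le r}Q(\rho)$ one has $Q\le\tilde Q$, the supremum is finite for each $r$ because $Q$ is continuous on the compact interval $[0,r]$, and $\tilde Q$ remains locally Lipschitz near the origin since the smooth function $Q$ is Lipschitz on $[0,1]$ with constant $\ell:=\sup_{[0,1]}|Q'|$, whence $\tilde Q(\rho)\le\tilde Q(0)+\ell\rho$ for $\rho\in[0,1]$. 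Any $Q_1$ that works for $\tilde Q$ also works for $Q$, so from now on I assume $Q$ increasing and $Q(\rho)-Q(0)\le\ell\rho$ for small $\rho$. The task becomes: produce an increasing $Q_1:\R_+\to\R_+$ with
\begin{equation}
Q(L_1+L_2 s)\le Q_1(L_1)+Q_1(L_2)s,\qquad L_1,L_2\ge0,\ s\in(0,1].
\end{equation}

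The key observation is the elementary bound $L_1+L_2 s\le 2\max\{L_1,L_2 s\}$, which I would exploit by distinguishing two regimes. If $L_1\ge L_2 s$, then $L_1+L_2 s\le 2L_1$ and monotonicity gives $Q(L_1+L_2 s)\le Q(2L_1)$, so it suffices that $Q_1(r)\ge Q(2r)$. If instead $L_2 s>L_1$, then $L_1+L_2 s\le 2L_2 s$ and, writing $g(\rho):=Q(\rho)-Q(0)\ge0$,
\begin{equation}
Q(L_1+L_2 s)\le Q(2L_2 s)=Q(0)+g(2L_2 s)\le Q(0)+\Big(\sup_{\rho\in(0,2L_2]}\tfrac{g(\rho)}{\rho}\Big)\,2L_2\,s,
\end{equation}
where I used $2L_2 s\in(0,2L_2]$. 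Setting $M(r):=\sup_{\rho\in(0,2r]}\frac{g(\rho)}{\rho}$ and noting $Q(0)\le Q(2L_1)\le Q_1(L_1)$, this regime is controlled once $Q_1(r)\ge 2rM(r)$.

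Combining the two regimes, I would define
\begin{equation}
Q_1(r):=Q(2r)+2r\,M(r),\qquad M(r)=\sup_{\rho\in(0,2r]}\frac{Q(\rho)-Q(0)}{\rho},
\end{equation}
which is a sum of two nonnegative nondecreasing functions (the product $2rM(r)$ being nondecreasing since $M$ is monotone in $r$), hence increasing, and which by the above satisfies the required inequality in both regimes. The only substantive point, and the step I expect to be the main obstacle, is the finiteness of $M(r)$: the ratio $\frac{Q(\rho)-Q(0)}{\rho}$ must remain bounded as $\rho\to0^+$, and this is exactly where the smoothness hypothesis on $Q$ enters, since $Q(0)$ may be strictly positive and only the Lipschitz bound $Q(\rho)-Q(0)\le\ell\rho$ prevents blow-up near zero; away from the origin the ratio is bounded by $Q(2r)/\rho_0$ on any subinterval $[\rho_0,2r]$ by continuity, so $M(r)<\infty$ for every $r$ and the construction is complete.
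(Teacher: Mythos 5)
Your proof is correct, but it follows a genuinely different route from the paper. The paper applies Newton's formula to the difference $Q(L_1+L_2e^{-\alpha t})-Q(L_1)=\int_0^1Q'(L_1+sL_2e^{-\alpha t})L_2e^{-\alpha t}\,ds$, which immediately extracts the factor $e^{-\alpha t}$ at the price of a two-variable quantity $L_2\sup_{s\in[0,1]}|Q'(L_1+sL_2)|$; this is then decoupled into a sum of one-variable increasing functions via the elementary device $Q_*(L_1^2+L_2^2)\le Q_*(2L_1^2)+Q_*(2L_2^2)$. You instead avoid differentiating along the segment and split on which of $L_1$ and $L_2s$ dominates, handling the second regime with the divided-difference supremum $M(r)=\sup_{\rho\in(0,2r]}(Q(\rho)-Q(0))/\rho$, whose finiteness near the origin is the one place smoothness is used. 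Your argument is longer but slightly more economical in hypotheses (continuity plus a Lipschitz bound at $0$ suffice, rather than $Q\in C^1$ on all of $\R_+$), and you are more careful than the paper about first replacing $Q$ by an increasing majorant — a step the paper glosses over, since without it the claimed monotonicity of the resulting $Q_1=Q+Q_*^1$ is not automatic. The paper's route is shorter and its decoupling trick for two-variable monotone bounds is a standard, reusable device worth knowing.
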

\begin{proof}
By Newton's formula we have
\begin{equation}
Q(L_1+L_2e^{-\alpha t})-Q(L_1)=\int_0^1Q'(L_1+sL^2e^{-\alpha t})L_2e^{-\alpha t}\leq Q(L_1,L_2)e^{-\alpha t},\ t\geq 0,
\end{equation}
where $Q(L_1,L_2)=L_2\sup_{s\in[0,1]}|Q'(L_1+sL_2)|$. Function $Q(L_1,L_2)$ admits the estimate 
\begin{equation}
Q(L_1,L_2)\leq Q_*(L_1^2+L_2^2)\leq Q_*(2L_1^2)+Q_*(2L_2^2)=Q^1_*(L_1)+Q^1_*(L^2),
\end{equation}
where
\begin{equation}
Q_*(r)=\sup\{Q(r_1,r_2):r_1^2+r_2^2\leq r\},\quad Q^1_*(r)=Q_*(2r^2).
\end{equation}
Thus the lemma follows with $Q_1(L)=Q(L)+Q^1_*(L)$.
\end{proof}
\section{Smooth attractors for Shatah-Struwe solutinos}
This section is devoted to asymptotic behaviour of Shatah-Struwe solutions of problem \eqref{eq main}. Let us summarize and rephrase the above obtained results in the language of dynamical systems. First, thanks to Theorem \ref{th StrEx} and Theorem \ref{th uniq.cont} we are able to define dynamical system $(S_t,\Cal{E})$ with phase space $\Cal{E}$ and evolutionary operator $S_t$ defined by
\begin{equation}
\label{def St}
S_t:\Cal{E}\to\Cal{E},\quad S_t\xi_0=\xi_u(t),\quad t\geq 0,
\end{equation}
where $u$ is a unique Shatah-Struwe solution of \eqref{eq main} with initial data $\xi_0\in \Cal{E}$. We will also refer to operator $S_t$ as semi-group operator $S_t$. Second, due to Corollary \ref{cor xi_u C(E)} we see that every trajectory $t\to S_t\xi_0$ is continuous in $\Cal{E}$. Furthermore, evolutionary operator $S_t:\Cal{E}\to\Cal{E}$ is also continuous for any fixed positive $t$ due to Theorem \ref{th uniq.cont}. Thus we can say that problem \eqref{eq main} generates a continuous dynamical system \eqref{def St}. Third, the defined dynamical system is dissipative, that is it possesses a bounded absorbing set:
\begin{Def}
\label{def abs.set}
A set $D$ to be called absorbing for dynamical system $(S_t,\Cal{E})$ if for any bounded set $B\subset \Cal{E}$ there exists time $T=T(B)$ such that for all $t\geq T$ we have $S_t B\subset D$.
\end{Def}
The dissipativity clearly follows from \eqref{est dissip}.

One of the objects that, in a sense, captures the behaviour of dynamical system when $t\to\infty$ is so called global attractor. Rigorously it can be defined as follows ( see \cite{bk BV,bk_ChuLas2010,MZDafer2008,TemamDS})
\begin{Def}
\label{Def.attr} 
A set $\Cal A\subset\Cal E$ is a global attractor for the semigroup $S_t$ in $\Cal E$ if:
\par
1) The set $\Cal A$ is compact in $\Cal E$.
\par
2) The set $\Cal A$ is strictly invariant: $S_t\Cal A=\Cal A$, $t\ge0$.
\par
3) The set $\Cal A$ {\it uniformly} attracts any bounded in $\Cal E$, i.e., for any bounded set $B$ in $\Cal E$ and any neighbourhood $\Cal O(\Cal A)$ of the attractor $\Cal A$ in $\Cal E$ there is time $T=T(B,\Cal E)$ such that
$$
S_tB\subset \Cal O(\Cal A),\ \ t\geq T.
$$
\end{Def}
We notice that due to {\it uniform} attraction property $3$ of Definition \ref{Def.attr}, the fact that $\Cal{A}$ is closed, it follows that any bounded invariant set $Y\subset \Cal{E}$ is a subset of $\Cal{A}$. This is why such defined attractor is called {\it global}. Also the compactness of $\Cal A$ guarantees that the attractor is essentially thinner than a ball in $\Cal E$ since in infinite dimensional space a ball is not pre-compact.

\begin{theorem}
\label{th attr.ex}
Let the assumptions of Theorem \ref{th StrEx} hold. Then the semigroup $S_t$ in $\Cal E$ defined by \eqref{def St} of problem \eqref{eq main} possesses the global attractor $\Cal A$ which is a bounded set in $\Cal E_1$. The attractor $\Cal A$ is generated by all  trajectories of $S_t$ which are defined for all $t\in\R$ and bounded in $\Cal E$:
\begin{equation}\label{6.k}
\Cal A=\Cal K\big|_{t=0},
\end{equation}
where $\Cal K\subset C_b(\R,\Cal E)$ is the set all bounded Shatah-Struwe solutions of \eqref{eq main} defined for all $t\in\R$.
\end{theorem}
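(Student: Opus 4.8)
The plan is to invoke the standard existence theorem for global attractors (as in \cite{bk BV,TemamDS}), for which it suffices to show that $(S_t,\Cal{E})$ is a continuous semigroup possessing a compact absorbing set. Continuity of each operator $S_t:\Cal{E}\to\Cal{E}$ is already guaranteed by Theorem \ref{th uniq.cont}, and continuity of the trajectories $t\mapsto S_t\xi_0$ in $\Cal{E}$ by Corollary \ref{cor xi_u C(E)}, so the semigroup is well defined and continuous. Dissipativity is immediate from the energy estimate \eqref{est dissip}: the $\Cal{E}$-ball $\Cal{B}$ of radius $R_0:=2Q(\|g\|)+1$ is absorbing, i.e. for every bounded $B\subset\Cal{E}$ there is $T(B)$ with $S_tB\subset\Cal{B}$ for $t\geq T(B)$. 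Thus the only substantial point is asymptotic compactness, and this is supplied for free by the parabolic-type smoothing of Section 4.

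First I would upgrade the bounded absorbing ball $\Cal{B}$ to a compact one. Since $\Cal{B}$ is bounded in $\Cal{E}$, the smoothing estimate \eqref{est sm on [0,d]} of Theorem \ref{th sm2}, applied with a single smoothing time $\delta_0=\delta_0(R_0,\|g\|)$ chosen uniformly over $\Cal{B}$, yields $\|S_{\delta_0}\xi_0\|_{\Cal{E}_1}\leq \delta_0^{-1/\alpha}\(Q(R_0)+Q(\|g\|)\)=:R_1$ for all $\xi_0\in\Cal{B}$. Hence $\Cal{B}_1:=S_{\delta_0}\Cal{B}$ is bounded in $\Cal{E}_1$, and it is again absorbing, since for $t\geq T(B)+\delta_0$ we have $S_tB=S_{\delta_0}\(S_{t-\delta_0}B\)\subset S_{\delta_0}\Cal{B}=\Cal{B}_1$. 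Its closure $\overline{\Cal{B}_1}$ in $\Cal{E}$ is therefore a compact absorbing set, compactness following from the compact embedding $\Cal{E}_1=\(H^2(\Omega)\cap H^1_0(\Omega)\)\times H^1_0(\Omega)\Subset H^1_0(\Omega)\times L^2(\Omega)=\Cal{E}$ (Rellich--Kondrachov).

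With a compact absorbing set in hand, the abstract theorem furnishes the global attractor as the $\omega$-limit set $\Cal{A}=\omega(\Cal{B}_1)=\bigcap_{s\geq0}\overline{\bigcup_{t\geq s}S_t\Cal{B}_1}$, which is nonempty, compact in $\Cal{E}$, strictly invariant and uniformly attracting. The representation \eqref{6.k} is then standard: on the one hand every $u\in\Cal{K}$ is a bounded complete trajectory, hence is attracted by and (being invariant) contained in $\Cal{A}$, giving $\Cal{K}\big|_{t=0}\subset\Cal{A}$; on the other hand, strict invariance $S_t\Cal{A}=\Cal{A}$ allows one to lift any point of $\Cal{A}$ to a complete trajectory lying entirely in the compact, hence bounded, set $\Cal{A}$, so that this trajectory belongs to $\Cal{K}$. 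Finally, to see that $\Cal{A}$ is bounded in $\Cal{E}_1$ I would use invariance together with smoothing: since $\Cal{A}$ is bounded and invariant it is absorbed into $\Cal{B}$, whence $\Cal{A}\subset\Cal{B}$ and therefore $\Cal{A}=S_{\delta_0}\Cal{A}\subset S_{\delta_0}\Cal{B}=\Cal{B}_1$, which is bounded in $\Cal{E}_1$; alternatively the $\Cal{E}_1$-dissipative estimate \eqref{est e1.dis} of Theorem \ref{th sm3}, applied along the complete trajectories of $\Cal{K}$ with initial time sent to $-\infty$, yields the same uniform $\Cal{E}_1$-bound.

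The genuinely hard analytic work, namely the asymptotic compactness, has already been absorbed into the smoothing Theorems \ref{th sm2} and \ref{th sm3}, so the main remaining obstacle is essentially bookkeeping: ensuring that the smoothing time $\delta_0$ can be chosen uniformly over the absorbing ball (so that $\Cal{B}_1$ is genuinely bounded in $\Cal{E}_1$ and not merely pointwise regular), and carefully justifying the backward-in-time lifting needed both for the representation \eqref{6.k} and for the $\Cal{E}_1$-boundedness of $\Cal{A}$. Both steps rely only on the uniform-in-$t$ dissipativity \eqref{est dissip} and on the instantaneous smoothing already established, so no estimates beyond those of Sections 3 and 4 are required.
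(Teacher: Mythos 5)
Your proposal is correct and follows essentially the same route as the paper: continuity plus dissipativity plus a compact absorbing set obtained from the smoothing Theorems \ref{th sm2} and \ref{th sm3} and the compact embedding $\Cal E_1\Subset\Cal E$, followed by the standard identification of $\Cal A$ with the bounded complete trajectories. The only cosmetic difference is that the paper takes a closed ball in $\Cal E_1$ (checking its $\Cal E$-closedness via Mazur and Banach--Alaoglu) as the compact absorbing set, whereas you take the $\Cal E$-closure of $S_{\delta_0}\Cal B$; both are valid.
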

\begin{proof}
Since semigroup $S_t$ is continuous existence of the attractor follows from the dissipativity and compactness of the semigroup $S_t$ by classic result ( see \cite{bk BV, TemamDS, MZDafer2008}). Dissipativity is already discussed. Compactness of dynamical system means existence of a compact absorbing set. In our case this is a direct consequence of Theorem \ref{th sm2} and Theorem \ref{th sm3}. Indeed, from Theorem \ref{th sm2} and Theorem \ref{th sm3} we conclude that a closed ball $\Cal B_R$ in $\Cal E_1$ of sufficiently large radius $R$ will be an absorbing set. Since $\Cal E_1$ is compactly embedded into $\Cal E$ we know that $\Cal B_R$ is precompact in $\Cal{E}$. Closedness of $\Cal B_R$ in $\Cal{E}$
follows from the facts that $\Cal B_R$ is convex and $\Cal E$
is reflexive. One just should remember Mazur theorem and Banach-Alaoglu theorem.

Finally representation \eqref{6.k} is classic. Since $\Cal A$ is invariant one easily sees that every element of $\Cal A$ generates a trajectory from $\Cal K$, thus $\Cal A\subset\Cal K|_{t=0}$.
On the other hand $\Cal K|_{t=0}$ is invariant and hence $\Cal K|_{t=0}\subset A$ since $\Cal A$ is a maximal bounded invariant set ( see comments after Definition \ref{Def.attr}). 
\end{proof}

The complexity of the structure of the attractor is in a sense measured by its fractal dimension
\begin{Def} Let $K$ be a compact set in a metric space $\Cal E$. By Hausdorff criterium, for every $\eb>0$, $K$ can be covered by finitely-many balls of radius $\eb$ in $\Cal E$. Let $N_\eb(K,\Cal E)$ be the minimal number of such balls which is enough to cover $\Cal E$. Then, the fractal dimension of $K$ is defined as follows:
\begin{equation}\label{5.frac}
\dim_f(K,\Cal E):=\limsup_{\eb\to0}\frac{\log N_\eb(K,\Cal E)}{\log\frac1\eb}.
\end{equation}
\end{Def}

Basically fractal dimension is the growth exponent of required number of $\eb-$balls needed to cover a compact set when $\eb$ tends to zero. For example, it is known that for smooth finite dimensional manifolds fractal dimension coincides with ordinary dimension ( see \cite{bk BV}). In other words to cover $n-$dimensional manifold we need about $\(\frac{1}{\eb}\)^n$ balls. 

We show finite-dimensionality of the attractor by constructing exponential attractor.

\begin{Def} A set $\Cal M$ is an exponential attractor for the semigroup $S_t$ in $\Cal E$ if the following conditions are satisfied:
\par
1) The set $\Cal M$ is compact in $\Cal E$.
\par
2) The set is $\Cal M$ is semi-invariant: $S_t\Cal M\subset\Cal M$.
\par
3) The set $\Cal M$ has finite fractal dimension in $\Cal E$.
\par
4) The set $\Cal M$ attracts exponentially the images of bounded sets, i.e., for every bounded set $B$ in $\Cal E$,
\begin{equation}\label{5.expattr}
dist_{\Cal E}(S_tB,\Cal M)\le Q(\|B\|_{\Cal E})e^{-\beta t},\ \ t\ge0,
\end{equation}
for some positive $\beta$ and monotone function $Q$ which are independent of $t$.
\end{Def}

If exponential attractor $\Cal{M}$ exists it, of course, contains the global attractor. This is easily seen from the fact that $\Cal M$ uniformly attracts all bounded sets in $\Cal{E}$ and $\Cal A$ is invariant. But in comparison to global attractor it has the advantage of attracting bounded sets exponentially fast! However we have to sacrifice its strict invariance.   

For construction of exponential attractor it is convenient to consider the action of the semigroup $S_t$ on a set $\Cal B$ defined as
\begin{equation}
\Cal{B}=\cup_{t\geq 0}S_t\Cal{B}_R,
\end{equation}
where $\Cal{B}_R$ is an absorbing ball in $\Cal E_1$ from Theorem \ref{th attr.ex}. Clearly, $\Cal B$ is bounded in $\Cal{E}_1$ (due to Theorem \ref{th sm3}), $\Cal B$ is compact in $\Cal{E}$, $\Cal B$ is positively invariant $S_t \Cal B\subset \Cal B$ and hence $S_t:\Cal B\to \Cal B$.

A technical result that allows to build exponential attractor is the following proposition
\begin{prop}
\label{prop d.exp.attr}
Let assumptions of Theorem \ref{th StrEx} hold and $S_t$ be a semigroup defined by \eqref{def St}. Then for any $\xi_1,\xi_2\in\Cal B$ the following estimate is valid
\begin{equation}
\label{est |S1xi1-S2xi2|_a<L|xi1-xi2|}
\|S_1\xi_1-S_1\xi_2\|_{\Cal E_\alpha} \leq L\|\xi_1-\xi_2\|_\Cal E,
\end{equation}
where $\Cal{E}_\alpha=H^{1+\alpha}(\Omega)\cap H^1_0(\Omega)\times H^\alpha(\Omega)$ and constant $L$ is independent of $\xi_1,\xi_2\in\Cal B$. 
\end{prop}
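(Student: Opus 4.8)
The plan is to use the smoothing of the linear damped wave equation to upgrade the elementary Lipschitz bound of Theorem \ref{th uniq.cont}, which lives in the energy space $\Cal E$, to the stronger space $\Cal E_\alpha$. Write $v=u_1-u_2$, where $u_i=S_t\xi_i$ is the Shatah-Struwe solution with datum $\xi_i\in\Cal B$. Then $v$ solves the linear problem
\begin{equation}
\Dt^2 v-\Dx v+\gamma(-\Dx)^\alpha\Dt v=f(u_2)-f(u_1)=:h,\qquad \xi_v(0)=\xi_1-\xi_2,\quad v|_{\d\Omega}=0.
\end{equation}
The crucial structural fact is that $\Cal B$ is positively invariant and bounded in $\Cal E_1$, so $u_1(t),u_2(t)$ remain in a fixed ball of $\Cal E_1$ for all $t\ge0$; since $H^2(\Omega)\subset L^\infty(\Omega)$ in dimension three, both are bounded in $L^\infty(\Omega)$ by a constant $R'$ depending only on $\Cal B$. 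Writing $f(u_1)-f(u_2)=\(\int_0^1 f'(\lambda u_1+(1-\lambda)u_2)\,d\lambda\)v$ and using \eqref{f ass1}, I obtain the pointwise bound $\|h(t)\|\le C(R')\|v(t)\|$, whence by Theorem \ref{th uniq.cont}
\begin{equation}
\|h\|_{L^1([0,1];L^2(\Omega))}\le C\|v\|_{L^\infty([0,1];L^2(\Omega))}\le C\|\xi_1-\xi_2\|_\Cal E,
\end{equation}
with $C$ uniform over $\Cal B$. This $L^\infty$ bound on the $u_i$ is the device that keeps every subsequent constant measured against the weak norm $\|\xi_1-\xi_2\|_\Cal E$.

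The second step produces a good intermediate time slice. Applying the linear estimates to the difference equation, namely Proposition \ref{prop lin basic} for the velocity and Corollary \ref{cor lin L2(H 1+a)} for the position, and using the bound on $\|h\|_{L^1L^2}$ just obtained, I get
\begin{equation}
\|\Dt v\|_{L^2([0,1];H^\alpha(\Omega))}+\|v\|_{L^2([0,1];H^{1+\alpha}(\Omega))}\le C\|\xi_1-\xi_2\|_\Cal E .
\end{equation}
By Chebyshev's inequality there is a time $t_0\in(0,\tfrac12)$ at which $\|v(t_0)\|_{H^{1+\alpha}}+\|\Dt v(t_0)\|_{H^\alpha}\le C\|\xi_1-\xi_2\|_\Cal E$; that is, $\xi_v(t_0)\in\Cal E_\alpha$ with the desired Lipschitz control. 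I stress that although $\xi_v(0)=\xi_1-\xi_2$ already lies in $\Cal E_1\subset\Cal E_\alpha$, its $\Cal E_\alpha$ norm is only controlled by the diameter of $\Cal B$, not by $\|\xi_1-\xi_2\|_\Cal E$; it is precisely the linear smoothing encoded in the $L^2_t$ estimates that converts weak-norm smallness into strong-norm regularity, and this is the heart of the argument.

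The final step propagates the $\Cal E_\alpha$ regularity from $t_0$ to $t=1$ by an energy estimate in $\Cal E_\alpha$. Multiplying the difference equation by $(-\Dx)^\alpha\Dt v$ (rigorously on Galerkin truncations, then passing to the limit, as elsewhere in the paper) gives
\begin{equation}
\frac{1}{2}\frac{d}{dt}\(\|(-\Dx)^{\frac\alpha2}\Dt v\|^2+\|(-\Dx)^{\frac{1+\alpha}{2}}v\|^2\)+\gamma\|(-\Dx)^\alpha\Dt v\|^2=(h,(-\Dx)^\alpha\Dt v).
\end{equation}
The point here is that the damping term controls $(-\Dx)^\alpha\Dt v$ in $L^2$, so the right-hand side is absorbed by Cauchy's inequality, $(h,(-\Dx)^\alpha\Dt v)\le\frac\gamma2\|(-\Dx)^\alpha\Dt v\|^2+C\|h\|^2$, and no $H^\alpha$ regularity of $h$ is needed; the $L^2$ bound $\|h(t)\|\le C\|v(t)\|$ suffices. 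Integrating from $t_0$ to $1$ and using $\int_{t_0}^1\|h\|^2\,dt\le C\|\xi_1-\xi_2\|_\Cal E^2$ together with the slice bound at $t_0$ yields $\|\xi_v(1)\|_{\Cal E_\alpha}\le L\|\xi_1-\xi_2\|_\Cal E$, which is \eqref{est |S1xi1-S2xi2|_a<L|xi1-xi2|} since $S_1\xi_1-S_1\xi_2=\xi_v(1)$. The genuinely delicate points are the uniform $L^\infty$ bound on $u_1,u_2$ coming from the $\Cal E_1$-boundedness of $\Cal B$, without which $\|h\|$ could not be dominated by $\|v\|$, and the justification of the $\Cal E_\alpha$ energy identity together with the equivalence of $\|(-\Dx)^{\frac{1+\alpha}{2}}\cdot\|$ with the $H^{1+\alpha}$ norm for $\alpha\in(0,\tfrac12)$; both are standard in the present Dirichlet setting.
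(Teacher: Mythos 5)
Your proof is correct and follows essentially the same route as the paper: both arguments linearize the nonlinearity via the $\Cal E_1$-boundedness and positive invariance of $\Cal B$ together with $H^2(\Omega)\subset L^\infty(\Omega)$ to get $\|f(u_1)-f(u_2)\|\le C\|v\|$, establish an integrated-in-time $\Cal E_\alpha$ bound on the difference controlled by $\|\xi_1-\xi_2\|_{\Cal E}$, and then upgrade it to a pointwise bound at $t=1$. The only cosmetic difference is in that last upgrade: the paper uses the time-weighted multiplier $t(-\Dx)^\alpha(\cdot)$ to produce $t\|\xi_v(t)\|^2_{\Cal E_\alpha}\le C e^{Kt}(1+t^2)\|\xi_v(0)\|^2_{\Cal E}$, whereas you pick a good slice $t_0$ by Chebyshev and propagate forward with the $\Cal E_\alpha$ energy estimate --- two implementations of the same smoothing mechanism.
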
  
\begin{proof}
Let $\xi_{u_i}(t)=S_t\xi_i$, where $i=1,2$, be two trajectories starting from $\xi_i$, and $u_i$ be corresponding Shatah-Struwe solutions. Then the difference $v(t)=u_1(t)-u_2(t)$ solves
\begin{equation}
\label{eq u1-u2}
\begin{cases}
\Dt^2v-\Dx v+\gamma (-\Dx)^\alpha\Dt v =f(u_2)-f(u_1),\\
v|_{\partial\Omega}=0,\ \xi_v(0)=\xi_1-\xi_2.
\end{cases}
\end{equation}
From the fact that $\Cal B$ is positively invariant, and bounded in $\Cal{E}_1$ we conclude that $\xi_{u_i}(t)$ is bounded in $\Cal E_1$. Furthermore, using embedding $H^2(\Omega)\subset C(\Omega)$ we derive
\begin{equation}
\label{est |f(u1)-f(u2)|<C|u1-u2|}
\|f(u_1)-f(u_2)\|\leq C\|u_1-u_2\|,
\end{equation}
where $C$ is independent of time and $\xi_i\in\Cal B$.

Multiplying equation \eqref{eq u1-u2} by $\Dt v$ ( notice, that since $\xi_{u_i}(t)\in\Cal E_1$ all products make sense) we find
\begin{equation}
\label{5.1}
\frac{d}{dt}\|\xi_v(t)\|^2_\Cal{E}+\|\Dt v\|^2_{H^\alpha(\Omega)}\leq C_1\|\xi_v(t)\|^2_\Cal{E},
\end{equation}
where we used \eqref{est |f(u1)-f(u2)|<C|u1-u2|}. Applying Gronwall inequality to \eqref{5.1} we find ( in 2 steps)
\begin{equation}
\label{5.2}
\|\xi_v(t)\|^2_{\Cal E}+\int_0^t\|\Dt v(s)\|^2_{H^\alpha(\Omega)}\,ds\le C_2\|\xi_v(0)\|^2_{\Cal E}e^{Kt}
\end{equation}
for some positive $C_2$ and $K$ which are independent of $\xi_i\in\Cal B$.
Multiplying equation \eqref{eq u1-u2} by $(-\Dx)^\alpha v$ (again all products make sense) we have
\begin{multline}
\frac{d}{dt}\((\Dt v,(-\Dx)^\alpha v)+\frac{\gamma}{2}\|v\|^2_{H^{2\alpha}(\Omega)}\)+\|v\|^2_{H^{1+\alpha}(\Omega)}=\\(f(u_2)-f(u_1),(-\Dx)^\alpha v)+\|\Dt v\|^2_{H^\alpha(\Omega)}.
\end{multline}
Integrating the above inequality from $0$ to $t$, taking into account \eqref{5.2} we deduce
\begin{equation}
\int_0^t\|v(s)\|^2_{H^{1+\alpha}(\Omega)}ds\leq C_2\|\xi_v(0)\|^2_{\Cal E}e^{Kt}
\end{equation}
and, therefore,
\begin{equation}\label{5.intsm}
\int_0^t\|\xi_v(s)\|^2_{\Cal E^\alpha}\,ds\le C_2\|\xi_v(0)\|^2_{\Cal E}e^{Kt}.
\end{equation}
Finally multiplying \eqref{eq u1-u2} by $t(-\Dx)^\alpha v$, using \eqref{est |f(u1)-f(u2)|<C|u1-u2|} we get
\begin{equation}
\frac{d}{dt}\(t\|\xi_v(t)\|^2_{\Cal{E}^\alpha}\)\leq \|\xi_v(t)\|^2_{\Cal{E}^\alpha}+C_3t\|\xi_v(t)\|^2_\Cal{E},
\end{equation}
that due to \eqref{5.intsm}, \eqref{5.2} yields
\begin{equation}
t\|\xi_v(t)\|^2_{\Cal{E}^\alpha}\leq (1+t^2)C_4e^{Kt}\|\xi_v(0)\|^2_\Cal{E}.
\end{equation}
Substituting $t=1$ finishes the proof.
\end{proof}

The next theorem which establishes existence of exponential attractor can be considered as the main result of this section.
\begin{theorem}
Let assumptions of Theorem \ref{th StrEx} hold then semi-group \eqref{def St} generated by Shatah-Struwe solutions of equation \eqref{eq main} possesses exponential attractor $\Cal M$ in $\Cal{E}$ which is a bounded subset of $\Cal E_1$. 
\end{theorem}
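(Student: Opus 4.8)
The plan is to reduce the construction to the abstract exponential attractor theorem for a discrete dynamical system and then lift the resulting object to continuous time. The natural phase space for the construction is the set $\Cal{B}=\cup_{t\geq 0}S_t\Cal{B}_R$ introduced above, which is bounded in $\Cal{E}_1$ (by Theorem \ref{th sm3}), compact in $\Cal{E}$, and positively invariant, so that $S_t:\Cal{B}\to\Cal{B}$. The key ingredient is the smoothing estimate \eqref{est |S1xi1-S2xi2|_a<L|xi1-xi2|} of Proposition \ref{prop d.exp.attr}, which states that the time-one map $S:=S_1$ is Lipschitz from the weaker norm of $\Cal{E}$ into the stronger norm of $\Cal{E}_\alpha$; since $\alpha>0$, the embedding $\Cal{E}_\alpha\subset\Cal{E}$ is compact, and this is precisely the form of smoothing (squeezing) property required by the standard abstract theory (see \cite{MZDafer2008}).

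First I would build a discrete exponential attractor. Applying the abstract theorem to the map $S=S_1$ acting on the compact positively invariant set $\Cal{B}$, with the smoothing estimate \eqref{est |S1xi1-S2xi2|_a<L|xi1-xi2|} and the compact embedding $\Cal{E}_\alpha\subset\Cal{E}$, one obtains a set $\Cal{M}_d\subset\Cal{B}$ which is compact in $\Cal{E}$, has finite fractal dimension in $\Cal{E}$, is semi-invariant ($S\Cal{M}_d\subset\Cal{M}_d$) and attracts the iterates $S^n\Cal{B}$ at an exponential rate. Since $\Cal{M}_d\subset\Cal{B}$, it is automatically bounded in $\Cal{E}_1$.

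Next I would pass from discrete to continuous time by setting
\begin{equation}
\Cal{M}:=\bigcup_{t\in[0,1]}S_t\Cal{M}_d.
\end{equation}
For this standard step one needs uniform Lipschitz (or H\"older) continuity of the map $(t,\xi)\mapsto S_t\xi$ on $[0,1]\times\Cal{B}$ with values in $\Cal{E}$. Continuity in the initial datum, uniformly in $t\in[0,1]$, is provided by Theorem \ref{th uniq.cont}. Continuity in time is where the $\Cal{E}_1$-regularity on $\Cal{B}$ is essential: since every trajectory in $\Cal{B}$ satisfies $\xi_u(t)\in\Cal{E}_1$ and $\xi_{\Dt u}(t)\in\Cal{E}$ uniformly (Theorems \ref{th sm2}, \ref{th sm3}), we have $\frac{d}{dt}\xi_u(t)=(\Dt u(t),\Dt^2 u(t))\in H^1_0(\Omega)\times L^2(\Omega)=\Cal{E}$ bounded uniformly, so $t\mapsto\xi_u(t)$ is Lipschitz in $\Cal{E}$ on $\Cal{B}$. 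With these two estimates the abstract transition lemma yields that $\Cal{M}$ is compact in $\Cal{E}$, semi-invariant, of finite fractal dimension in $\Cal{E}$, and exponentially attracts $\Cal{B}$; moreover $\Cal{M}$ remains bounded in $\Cal{E}_1$ because $S_t\Cal{M}_d\subset\Cal{B}$ for all $t\in[0,1]$.

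Finally I would upgrade the attraction property \eqref{5.expattr} from $\Cal{B}$ to an arbitrary bounded set $B\subset\Cal{E}$. By the dissipative estimate \eqref{est dissip} together with the parabolic smoothing of Theorems \ref{th sm2} and \ref{th sm3}, the ball $\Cal{B}_R$ absorbs every bounded subset of $\Cal{E}$ at an exponential rate, and hence so does $\Cal{B}$; combining this with the exponential attraction of $\Cal{B}$ by $\Cal{M}$ and the uniform Lipschitz bound of Theorem \ref{th uniq.cont} via the transitivity of exponential attraction (see \cite{MZDafer2008}) gives \eqref{5.expattr} for all bounded $B$. The main obstacle is not any single hard estimate, since the decisive smoothing property is already contained in Proposition \ref{prop d.exp.attr}; rather it is the careful verification of uniform Lipschitz continuity in time on $\Cal{B}$, which genuinely relies on the parabolic-type smoothing of Theorems \ref{th sm2} and \ref{th sm3}, together with the transitivity argument needed to reach all bounded subsets of $\Cal{E}$ and not merely the invariant set $\Cal{B}$.
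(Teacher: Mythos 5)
Your proposal is correct and follows essentially the same route as the paper: a discrete exponential attractor for $(S_1,\Cal B)$ from the smoothing estimate \eqref{est |S1xi1-S2xi2|_a<L|xi1-xi2|} and the compact embedding $\Cal E_\alpha\subset\Cal E$, lifted to continuous time via $\Cal M=\cup_{t\in[0,1]}S_t\Cal M_d$ using Lipschitz continuity in $\xi_0$ (Theorem \ref{th uniq.cont}) and in $t$ (from boundedness of $\Cal B$ in $\Cal E_1$). Your added remarks on transitivity of exponential attraction for general bounded sets are a standard step the paper leaves implicit, not a different approach.
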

\begin{proof}
It is known that estimate \eqref{est |S1xi1-S2xi2|_a<L|xi1-xi2|} guarantees existence of exponential attractor $\Cal{M}_d$ for discrete dynamical system $(S_1,\Cal{B})$, since embedding $\Cal{E}_\alpha\subset\Cal{E}$ is compact (see \cite{EMZ00}). Also the map $(t,\xi_0)\to S_t\xi_0$ is uniformly Lipschitz with respect to $t\in[0,1]$ and $\xi_0$ on $\Cal{B}$ in norm of $\Cal{E}$. Indeed, Lipschitz property with respect to $\xi_0$ follows from Theorem \ref{th uniq.cont}. And Lipschitz property with respect to $t$ in norm of $\Cal E$ on $\Cal B$ follows from Newton's formula and boundedness of $\Cal B$ in $\Cal E_1$. Thus exponential attractor for continuous system $(S_t,\Cal E)$ can be found by $\Cal{M}=\cup_{t\in[0,1]}S_t\Cal M_d$. 
\end{proof}

\subsection*{Acknowledgements}
The author is grateful to Dr Sergey Zelik for numerous fruitful discussions and sharing his ideas.

\end{document}